\numberwithin{equation}{section}
\newtheorem{theorem}{\bf Theorem}[section]
\newtheorem{lem}{\bf Lemma}[section]
\newtheorem{cor}{\bf Corollary}[section]
\newtheorem{remark}{\bf Remark}[section]
\newtheorem{definition}{\bf Definition}[section]
\newtheorem{conj}{\bf Conjecture}[section]
\newcommand\dv{\mathrm{div}}
\newcommand\tr{\mathrm{tr}}
\begin{document}

\title[Estimates of the gaps between consecutive eigenvalues]{Estimates of the gaps between consecutive eigenvalues for a class of elliptic differential operators in divergence form on Riemannian manifolds}

\author{Cristiano S. Silva$^1$}
\author{Juliana F.R. Miranda$^2$}
\author{Marcio C. Ara\'ujo Filho$^3$}

\address{$^{1,2}$Departamento de Matem\'atica, Universidade Federal do Amazonas, Av. General Rodrigo Oct\'avio, 6200, 69080-900 Manaus, Amazonas, Brazil.}
\address{$^3$Departamento de Matemática, Universidade Federal de Rondônia, Campus Ji-Paraná, R. Rio Amazonas, 351, Jardim dos Migrantes, 76900-726 Ji-Paraná, Rondônia, Brazil}

\email{$^1$cristianosilva@ufam.edu.br}
\email{$^2$jfrmiranda@ufam.edu.br}
\email{$^3$marcio.araujo@unir.br}

\urladdr{$^{1,2}$http://dmice.ufam.edu.br}
\urladdr{$^3$https://dmejp.unir.br}

\keywords{Elliptic Operator, Eigenvalues, Dirichlet Problem, Immersions.}
\subjclass[2010]{Primary 35P15; Secondary 53C42, 58J50.}

\begin{abstract}
In this work, we obtain estimates for the upper bound of gaps between consecutive eigenvalues for the eigenvalue problem of a class of second-order elliptic differential operators in divergent form, with Dirichlet boundary conditions, in a limited domain of n-dimensional Euclidean space. This class of operators includes the well-known Laplacian and the square Cheng-Yau operator. For the Laplacian case, our estimate coincides with that obtained by D. Chen, T. Zheng, and H. Yang, which is the best possible in terms of the order of the eigenvalues. For pinched Cartan-Hadamard manifolds the estimates were made in particular cases of this operator.
\end{abstract}
\maketitle

\section{Introduction}

Let $(M^n,\langle\cdot,\cdot\rangle)$ be an $n$-dimensional complete Riemannian manifold, and $\Omega~\subset~M^n$ be a bounded domain with smooth boundary $\partial\Omega$. Let $T$ be a symmetric positive definite $(1,1)$-tensor on $M^n$ and a function $\eta\in C^2(M)$. Due to the boundedness
of $\Omega$ there exist two positive real constants $\varepsilon$ and $\delta$ such that $\varepsilon \leq\langle T(X), X \rangle\leq\delta$ for any unit vector field $X$ on $\Omega$.

In this paper, we are interested in studying the \textit{gap} between consecutive eigenvalues of the following eigenvalue problem with the Dirichlet boundary condition:
\begin{equation}\label{problem1}
    \left\{\begin{array}{ccccc}
    \mathcal{L}  u  &=& - \lambda u & \mbox{in } & \Omega,\\
     u&=&0 & \mbox{on} & \partial\Omega,
    \end{array}
    \right.
\end{equation}
where $\mathcal{L}$ is defined as the second-order elliptic differential operator in the $(\eta,T)$-divergence form
\begin{equation}\label{1.2}
    \mathcal{L}u =\dv_\eta (T(\nabla u)) := \dv(T(\nabla u)) - \langle \nabla \eta, T(\nabla u) \rangle.
\end{equation}
Here $\dv$ stands for the divergence of smooth vector fields and $\nabla$ for the gradient of smooth functions. We highlight that the operator $\mathcal{L}$  is a class of second-order elliptic differential operators in divergence form that includes, e.g., the Laplace-Beltrami and Cheng-Yau operators.

We can notice that $\mathcal{L}$ is a formally self-adjoint operator in the Hilbert space $\mathcal{H}_0^1(\Omega,e^{-\eta}d\Omega)$ of all functions in $L^2(\Omega,e^{-\eta}d\Omega)$ that vanish on $\partial \Omega$ in the sense of the trace. Then, Problem~\eqref{problem1} has a real and discrete spectrum
\begin{equation}\label{spectrum}
    0 < \lambda_1 \leq \lambda_2 \leq \cdots \leq \lambda_k \leq \cdots\to\infty,
\end{equation}
where each $\lambda_i$ is repeated according to its multiplicity. Eigenspaces belonging to distinct eigenvalues are orthogonal in $L^2(\Omega,e^{-\eta}d\Omega)$, which is the direct sum of all the eigenspaces. We refer to the dimension of each eigenspace as the multiplicity of the eigenvalue, for more details see Chavel~\cite{Isaac}.

For the case of $T$ being the identity operator and $\eta$ not necessarily constant function, the operator $\mathcal{L}$ becomes the drifted Laplace-Beltrami operator $\Delta_\eta$, that is, Problem~\eqref{problem1} becomes Dirichlet problem for operator  $\Delta_\eta$. In this case, it is worth mentioning the work by Gomes and Miranda~\cite{GM}, the paper by Xia and Xu~\cite{XiaXu}, and their references. In particular, when $T$ is the identity operator and $\eta$ is a constant function the operator $\mathcal{L}$ becomes the Laplace-Beltrami operator $\Delta$, that is, Problem~\eqref{problem1} becomes the Dirichlet problem for operator $\Delta$. For this case, we refer to Chen and Cheng~\cite{CCh}, Cheng and Yang~\cite{ChY1, ChY3}, Yang~\cite{Yang}, and their references.

When $T$ is divergence-free, that is, when $\dv T=0$ the third author and Gomes~\cite{AG} explained the relationship between the operator $\mathcal{L}$ and the operator $\square$ which was introduced by Cheng and Yau~\cite{ChYa}, that is, they checked that the operator $\mathcal{L}$ becomes
\begin{equation}
\nonumber
\mathcal{L}f = \square f - \langle\nabla \eta, T(\nabla f)\rangle :=\square_{\eta},
\end{equation}
where $f\in C^\infty(M)$, see Section~\ref{preliminaries}. Following the third author and Gomes, we call $\square_{\eta}$ the \emph{drifted Cheng-Yau operator} with a \emph{drifting function} $\eta$. In particular, if $\eta$ is constant, then $\square$ is the Cheng-Yau operator with $\dv T=0$.

As mentioned above, the purpose of this work was to obtain the gap estimates of the consecutive eigenvalues of Problem~\eqref{problem1} and we were motivated by the work of Chen, Zheng and Yang~\cite{Art}. In their work they proposed a conjecture for the gap between the consecutive eigenvalues of Laplace-Beltrami operator, cf. \cite[Conjecture~1.3]{Art}. Here we extend this conjecture for the case of  $\mathcal{L}$ operator, namely:
\begin{conj}\label{Cj2}
	Let $\Omega\subset M^n$ be a bounded domain in a complete Riemannian manifold. For Dirichlet Problem~\eqref{problem1}, the upper bound for the gap  between consecutive eigenvalues of the Laplace-Beltrami should be 
	\begin{equation}
		\nonumber
        \lambda_{k+1}-\lambda_{k}\leq C_{n,\Omega}k^{\frac{\delta}{n\varepsilon}}, \: k>1,
	\end{equation}
	where $C_{n,\Omega}$ is a constant dependent on $\Omega$, on the dimension $n$ de $M$, on the tensor $T$ and on the drifting function $\eta$.
\end{conj}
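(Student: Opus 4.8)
\emph{Proof proposal.} Since the statement is a sharp gap conjecture, the plan is to first set up the robust commutator machinery and then isolate precisely where the sharp exponent forces genuinely new input. Fix an $L^2(\Omega,e^{-\eta}d\Omega)$-orthonormal family $\{u_i\}$ of eigenfunctions for Problem~\eqref{problem1}, with $\mathcal Lu_i=-\lambda_i u_i$, and recall that the self-adjointness noted above gives the weighted Rayleigh characterization
\begin{equation}
\nonumber
\lambda_i=\frac{\displaystyle\int_\Omega \langle T(\nabla u_i),\nabla u_i\rangle\, e^{-\eta}\,d\Omega}{\displaystyle\int_\Omega u_i^2\, e^{-\eta}\,d\Omega},\qquad \lambda_{k+1}=\min_{\varphi\perp u_1,\dots,u_k}\frac{\displaystyle\int_\Omega \langle T(\nabla \varphi),\nabla \varphi\rangle\, e^{-\eta}\,d\Omega}{\displaystyle\int_\Omega \varphi^2\, e^{-\eta}\,d\Omega}.
\end{equation}
Using the immersion $\Omega\subset\mathbb R^n$, I would feed the coordinate functions $x_1,\dots,x_n$ into this principle through the orthogonalized trial functions $\varphi_\alpha=x_\alpha u_k-\sum_{i=1}^k a_{\alpha i}u_i$ with $a_{\alpha i}=\int_\Omega x_\alpha u_k u_i\, e^{-\eta}\,d\Omega$, which are admissible for $\lambda_{k+1}$.

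The computational core is the commutator identity, which for the symmetric tensor $T$ reads
\begin{equation}
\nonumber
\mathcal L(x_\alpha u_k)=x_\alpha\,\mathcal Lu_k+2\langle T(\nabla x_\alpha),\nabla u_k\rangle+u_k\,\mathcal Lx_\alpha,
\end{equation}
with $\mathcal Lx_\alpha=\dv_\eta(T(\nabla x_\alpha))$ a lower-order term controlled by $\delta$ and $\nabla\eta$. Substituting $\varphi_\alpha$, expanding the quotient, summing over $\alpha$, and using $\sum_\alpha\langle T(\nabla x_\alpha),\nabla x_\alpha\rangle=\tr T$ with $n\varepsilon\le\tr T\le n\delta$, the standard Cauchy--Schwarz and orthogonalization steps should produce a Yang-type recursion
\begin{equation}
\nonumber
\sum_{i=1}^{k}(\lambda_{k+1}-\lambda_i)^2\le\frac{4\delta}{n\varepsilon}\sum_{i=1}^{k}(\lambda_{k+1}-\lambda_i)\,\lambda_i ,
\end{equation}
in which the ellipticity ratio $\delta/\varepsilon$ surfaces exactly where the conjectured exponent lives. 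Combined with a weighted Li--Yau/Kr\"oger lower bound $\lambda_k\ge c_{n,\Omega}k^{2/n}$, obtained by Dirichlet--Neumann bracketing of the heat trace of $\mathcal L$ against its Euclidean model, this yields an \emph{unconditional} gap bound of the eigenvalue order, $\lambda_{k+1}-\lambda_k=O(\lambda_k)=O(k^{2/n})$. This is the portion I expect to go through cleanly, and it already reproduces the Chen--Zheng--Yang estimate that is best possible in terms of the order of the eigenvalues.

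The hard part will be sharpening the order from $k^{2/n}$ to the conjectured $k^{\delta/(n\varepsilon)}$, which in the isotropic case is $k^{1/n}\sim\sqrt{\lambda_k}$ and is \emph{exact} in dimension one, where $\lambda_k=(k\pi/L)^2$ forces $\lambda_{k+1}-\lambda_k\sim k$. The Yang recursion is structurally incapable of this gain: it bounds only the averaged quantity $\frac1k\sum_{i\le k}(\lambda_{k+1}-\lambda_i)^2$ and therefore cannot by itself exclude the spectral clustering that an individual gap estimate must rule out. The natural route to the missing square-root is through the counting function $N(\lambda)=\#\{i:\lambda_i\le\lambda\}$: a gap $\lambda_{k+1}-\lambda_k=g$ means $N\equiv k$ on $[\lambda_k,\lambda_{k+1})$, so a two-sided asymptotic $N(\lambda)=c_{n,\Omega}\lambda^{n/2}+R(\lambda)$ with a sharp remainder $R(\lambda)=O(\lambda^{(n-1)/2})$ gives, via convexity of $t\mapsto t^{n/2}$, the estimate $g\lesssim\lambda_k^{1/2}\sim k^{1/n}$.

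Thus the genuine obstacle is establishing such a sharp, ellipticity-tracking remainder for the variable-coefficient weighted operator $\mathcal L$: the dependence of the remainder constant on $\varepsilon,\delta$ is precisely what should promote the exponent to $\delta/(n\varepsilon)$, reflecting the metric distortion between the $T$-geometry and the Euclidean model. This is a delicate microlocal and dynamical problem, involving boundary and bicharacteristic contributions in the spirit of Seeley and Ivrii, rather than a variational one, and it is exactly why the statement remains conjectural. On pinched Cartan--Hadamard manifolds the coordinate functions are unavailable, so I would replace them by bounded comparison functions whose $T$-gradient is controlled by the pinching constants and invoke Hessian and Laplacian comparison to reproduce the commutator identities with the curvature bounds absorbed into $C_{n,\Omega}$; the loss of exact model computations there is what should confine the sharp statement to particular cases of $\mathcal L$, in agreement with the abstract.
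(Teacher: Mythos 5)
There is a genuine gap: the step you set aside as ``the hard part'' and declare to require sharp microlocal Weyl-remainder asymptotics is exactly the content of what the paper actually proves, and it is obtained by an elementary variational device that your outline never reaches. Your trial functions $\varphi_\alpha=x_\alpha u_k-\sum_i a_{\alpha i}u_i$ lead only to a Yang-type recursion and hence to $\lambda_{k+1}-\lambda_k=O(\lambda_k)=O(k^{2\delta/(n\varepsilon)})$; contrary to what you write, this does \emph{not} reproduce the Chen--Zheng--Yang estimate, which (like Theorem~\ref{T1.1} here) is of order $k^{1/n}\sim\sqrt{\lambda_k}$ in the Laplacian case. The square-root gain is not extracted from a two-sided Weyl law. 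Instead, the paper tests against $gu_j$ with the \emph{complex exponential} $g=e^{i\alpha f}$ for a real $f$ with $|\nabla f|=1$ (a Euclidean coordinate in Theorem~\ref{T1.1}, $\ln x_n$ on $\mathbb{H}^n$, the distance function on Cartan--Hadamard manifolds). Combining the resulting identity with the purely algebraic Lemma~\ref{L3.1} applied to the shifted sequence $\mu_s=\lambda_s-\lambda_j$, $s>k$ --- which is what controls an \emph{individual} consecutive gap rather than an average, answering the ``spectral clustering'' objection you raise --- and then optimizing over $\alpha$, one arrives at Corollary~\ref{C3.2}:
\begin{equation}
\nonumber
\sigma\Bigl(\sqrt{\lambda_{k+2}-\lambda_j}-\sqrt{\lambda_{k+1}-\lambda_j}\Bigr)^{2}\le\int_{\Omega}\Bigl(2\langle T(\nabla u_j),\nabla f\rangle+u_j\mathcal{L}f\Bigr)^{2}\mathrm{dm},
\end{equation}
and multiplying by $\bigl(\sqrt{\lambda_{k+2}-\lambda_j}+\sqrt{\lambda_{k+1}-\lambda_j}\bigr)^{2}\le4\lambda_{k+2}$ gives $(\lambda_{k+2}-\lambda_{k+1})^{2}\le C\,\lambda_{k+2}$.

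The conjectured exponent $\delta/(n\varepsilon)$ then enters not through an ellipticity-tracking Weyl remainder but through the Cheng--Yang-type upper bound of Lemma~\ref{Yang-Ineq1}, $\lambda_{k+1}\le\bigl(1+\tfrac{4\delta}{n\varepsilon}\bigr)k^{2\delta/(n\varepsilon)}\upsilon_1$, so that $\lambda_{k+2}-\lambda_{k+1}\le C\sqrt{\lambda_{k+2}}\le C_{n,\Omega}(k+1)^{\delta/(n\varepsilon)}$. Two further corrections of scope: the conjecture for an arbitrary complete manifold is left open by the paper as well, but it is \emph{proved} for $\Omega\subset\mathbb{R}^n$ and, under extra hypotheses on $T$ and $\eta$, on $\mathbb{H}^n$ and on pinched Cartan--Hadamard manifolds --- your proposal establishes none of these; and on Cartan--Hadamard manifolds the paper does not need ``bounded comparison functions,'' only the distance function $r$ together with the Bochner formula~\eqref{Bo}, Rauch comparison, and the hypothesis that $T$ is radially parallel. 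As written, your argument proves only the weaker $O(k^{2\delta/(n\varepsilon)})$ bound and mislocates the remaining difficulty.
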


We prove the Conjecture~\ref{Cj2} for the case when $M$ is the Euclidean space. The proof is motivated by the corresponding results for the Laplacian operator $\Delta$ in \cite{Art}, see Theorem~\ref{T1.1}. When $M$ is the Hyperbolic space, we prove Conjecture~\ref{Cj2} with some additional hypothesis, we assume that the function $\eta$ is radially constant and the tensor $T$ is a limited $(1, 1)$-tensor such that $T(\nabla \ln{x_n}) = \psi\nabla\ln{x_n}$ for some radially constant $\psi\in C^{\infty}(M)$, see Theorem~\ref{T1.2}. We also prove this conjecture for the operator  $\square_{\eta}$ on bounded domain in pinched Cartan-Hadamard manifolds, see Theorem~\ref{T1.3}.

\begin{theorem}\label{T1.1}
	Let $\Omega \subset \mathbb{R}^{n}$ be a bounded domain in Euclidian space $\mathbb{R}^{n}$ and $\lambda_{k}$ be the k-th $(k>1)$ eigenvalue of the Dirichlet eigenvalue Problem~\eqref{problem1}. Then, we get
	\begin{equation}
		\nonumber
        \lambda_{k+1}-\lambda_{k}\leq C_{n,\Omega}k^{\frac{\delta}{n\varepsilon}},
	\end{equation}	
	where $\displaystyle C_{n,\Omega} = 4\left(\!\lambda_{1} + \dfrac{4C_0 + T_{0}^{2}}{4\delta}\right)\sqrt{\frac{\delta}{\sigma n}\!\left(1 + \frac{4\delta}{n\varepsilon}\right)}$, for $\sigma = 2\delta - \varepsilon$, $T_0=\sup_{\Omega}|\tr(\nabla T)|$ and
    $C_0=\sup_\Omega \left\{\frac{1}{2}\dv \Big( T\big(T(\nabla \eta)-\tr(\nabla T)\big)\Big) - \frac{1}{4}|T(\nabla \eta)|^2\right\}.$
\end{theorem}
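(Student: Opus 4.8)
The plan is to adapt the Payne--Pólya--Weinberger/Yang variational machinery, in the form used by Chen, Zheng and Yang \cite{Art}, to the weighted quadratic form associated with $\mathcal{L}$. Throughout I write $d\mu=e^{-\eta}\,d\Omega$ and let $\{u_i\}$ be an $L^2(\Omega,d\mu)$-orthonormal basis of eigenfunctions, $\mathcal{L}u_i=-\lambda_i u_i$. Since $\dv(e^{-\eta}T(\nabla u))=e^{-\eta}\mathcal{L}u$, integration by parts shows that the Dirichlet form of $\mathcal{L}$ is $D(v,w)=\int_\Omega\langle T(\nabla v),\nabla w\rangle\,d\mu$, so that $\lambda_{k+1}$ is the minimum of $D(v,v)/\|v\|_\mu^2$ over $0\neq v\in\mathcal{H}_0^1$ with $v\perp u_1,\dots,u_k$. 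The first step is to feed into this characterization the trial functions $\phi_{\alpha i}=x_\alpha u_i-\sum_{j=1}^k a_{\alpha ij}u_j$, where $x_1,\dots,x_n$ are the Euclidean coordinates and $a_{\alpha ij}=\int_\Omega x_\alpha u_i u_j\,d\mu$ are chosen so that $\phi_{\alpha i}\perp u_1,\dots,u_k$.

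The second step is the commutator computation. Using the product rule $\mathcal{L}(x_\alpha u_i)=x_\alpha\mathcal{L}u_i+u_i\,\mathcal{L}x_\alpha+2\langle T(\nabla x_\alpha),\nabla u_i\rangle$ one obtains $[\mathcal{L},x_\alpha]u_i=2\langle T(\nabla x_\alpha),\nabla u_i\rangle+u_i\,\mathcal{L}x_\alpha$, and the Rayleigh--Ritz bound gives, for each $\alpha$ and $i$,
\[
(\lambda_{k+1}-\lambda_i)\,\|\phi_{\alpha i}\|_\mu^2\le -\int_\Omega\bigl(2\langle T(\nabla x_\alpha),\nabla u_i\rangle+u_i\,\mathcal{L}x_\alpha\bigr)\phi_{\alpha i}\,d\mu.
\]
This is where the two structural constants of the theorem enter: $\mathcal{L}x_\alpha$ is governed by $\tr(\nabla T)$ and by $T(\nabla\eta)$, and after integrating the terms containing $u_i\,\mathcal{L}x_\alpha$ by parts one is led precisely to the quantities $T_0=\sup_\Omega|\tr(\nabla T)|$ and $C_0$, which package the first- and zeroth-order error produced by the non-constancy of $\eta$ and $T$.

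The third step is the summation over $\alpha=1,\dots,n$ and the passage from tensorial to scalar quantities. Since $\nabla x_\alpha=e_\alpha$ in $\mathbb{R}^n$, the leading term sums to $\sum_\alpha\langle T(\nabla x_\alpha),\nabla x_\alpha\rangle=\tr T$, while the mixed terms $\sum_\alpha\langle T(\nabla x_\alpha),\nabla u_i\rangle^2$ and the energies $\int_\Omega\langle T(\nabla u_i),\nabla u_i\rangle\,d\mu=\lambda_i$ are controlled above and below by the pinching $\varepsilon\le T\le\delta$ (so $n\varepsilon\le\tr T\le n\delta$, and $\sigma=2\delta-\varepsilon$ appears when a term $\langle TX,TX\rangle\le\delta\langle TX,X\rangle$ is combined with the energy lower bound). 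Applying Cauchy--Schwarz in the standard Yang fashion then yields a universal inequality of $(\eta,T)$-Yang type, of the form
\[
\sum_{i=1}^k(\lambda_{k+1}-\lambda_i)^2\le \frac{4\delta}{n\varepsilon}\sum_{i=1}^k(\lambda_{k+1}-\lambda_i)\Bigl(\lambda_i+\frac{4C_0+T_0^2}{4\delta}\Bigr),
\]
which is the technical heart of the argument and specializes to the classical Yang inequality (coefficient $4/n$) when $T=\mathrm{Id}$ and $\eta$ is constant.

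The final step is purely algebraic and follows the recursion of \cite{Art}. Setting $\mu_i=\lambda_i+\tfrac{4C_0+T_0^2}{4\delta}$ and feeding the universal inequality into their monotonicity/recursion lemma produces a bound for $\lambda_{k+1}$ in terms of a weighted average of the $\mu_i$; their refined Cauchy--Schwarz estimate on the weights $(\lambda_{k+1}-\lambda_i)$ then controls the \emph{single} gap $\lambda_{k+1}-\lambda_k$ by a power of $k$ equal to one half of the eigenvalue-growth exponent $\tfrac{2\delta}{n\varepsilon}$, i.e. by $k^{\delta/(n\varepsilon)}$, with the accumulated factors collapsing to the stated constant $C_{n,\Omega}$. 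I expect the main obstacle to be the third step: isolating and estimating the error terms coming from $\mathcal{L}x_\alpha$ (that is, from $\dv T\neq0$ and $\nabla\eta\neq0$) so that they assemble exactly into $T_0$ and $C_0$ while the pinching constants $\varepsilon,\delta,\sigma$ remain sharp. Once the universal inequality is in this clean form, the deduction of the $k^{\delta/(n\varepsilon)}$ gap bound is the established Chen--Zheng--Yang machinery.
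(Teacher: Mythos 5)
Your proposal assembles the right ingredients at the periphery --- the commutator $[\mathcal{L},x_\alpha]u_i=2\langle T(\nabla x_\alpha),\nabla u_i\rangle+u_i\,\mathcal{L}x_\alpha$, the packaging of the error terms into $T_0$ and $C_0$, and the growth bound $\lambda_{k+1}\le(1+\tfrac{4\delta}{n\varepsilon})k^{2\delta/(n\varepsilon)}\bigl(\lambda_1+\tfrac{4C_0+T_0^2}{4\delta}\bigr)$, which is exactly the paper's Lemma~\ref{Yang-Ineq1} --- but the central mechanism is missing. Your final step asserts that the Yang-type universal inequality $\sum_{i=1}^k(\lambda_{k+1}-\lambda_i)^2\le\frac{4\delta}{n\varepsilon}\sum_{i=1}^k(\lambda_{k+1}-\lambda_i)\mu_i$, combined with ``the recursion of \cite{Art}'' and ``a refined Cauchy--Schwarz estimate on the weights,'' controls the single gap by $k^{\delta/(n\varepsilon)}$. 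That is not the case: a universal inequality of this form yields, via the Cheng--Yang recursion, the eigenvalue growth $\lambda_{k+1}\lesssim k^{2\delta/(n\varepsilon)}$ and a gap bound of the \emph{same} order $k^{2\delta/(n\varepsilon)}$. The entire point of Chen--Zheng--Yang (and of this paper) is that the square-root improvement to $k^{\delta/(n\varepsilon)}$ does not follow from the universal inequality; it requires a separate argument producing a bound of product type, $(\lambda_{k+2}-\lambda_{k+1})^2\lesssim\lambda_1\,\lambda_{k+2}$, so that taking the square root halves the exponent.

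In the paper that product-type bound is obtained by a route your PPW trial functions $\phi_{\alpha i}=x_\alpha u_i-\sum_j a_{\alpha ij}u_j$ cannot reproduce: one tests with $gu_j$ for the complex oscillating factor $g=e^{i\alpha x_l}$, applies the three-term sequence Lemma~\ref{L3.1} (which involves the two consecutive eigenvalues $\lambda_{k+1}$ and $\lambda_{k+2}$ simultaneously through the quotient $\frac{A+\mu_{m_1}\mu_{m_1+1}B}{\mu_{m_1}+\mu_{m_1+1}}$) to get Lemma~\ref{L3.2}, and then optimizes over the frequency $\alpha$, which produces the geometric-mean term $2\sqrt{(\lambda_{k+2}-\lambda_j)(\lambda_{k+1}-\lambda_j)}$ in Corollary~\ref{C3.1} and hence, with $|\nabla f|=1$ and $j=1$, the estimate $(\lambda_{k+2}-\lambda_{k+1})^2\le\frac{16}{\sigma n}\bigl(\delta\lambda_1+C_0+\tfrac{T_0^2}{4}\bigr)\lambda_{k+2}$ of Corollary~\ref{C3.2}. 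Only after this does Lemma~\ref{Yang-Ineq1} enter, solely to replace $\lambda_{k+2}$ by $C(k+1)^{2\delta/(n\varepsilon)}$. Without the oscillating test function and the sequence lemma, your argument stalls at a gap bound of order $k^{2\delta/(n\varepsilon)}$ and does not prove the theorem.
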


Notice that for the case $\mathcal{L}=\square \quad \mbox{or} \quad \square_\eta$ we have $T_0=0$ and in the case of $\mathcal{L}=\Delta \quad \mbox{or} \quad \Delta_\eta $  we have, in addition, $\varepsilon=\delta = 1$. Moreover, in case when the function $\eta$ is constant we get $C_0 =0$. In particular we get the following corollary.

\begin{cor}\label{cor_1}
    Under the same setup as in Theorem~\ref{T1.1}, we have: 
	\begin{enumerate}
	\item[i)] For the drifted Cheng-Yau operator $\square_{\eta}$, we have
	\begin{equation}
		\nonumber
        \lambda_{k+1}-\lambda_{k}\leq C_{n,\Omega}k^{\frac{\delta}{n\varepsilon}},
	\end{equation}	
	where $\displaystyle C_{n,\Omega} = 4 \left(\lambda_{1} +  \frac{C_{0}}{\delta}\right)\sqrt{\!\frac{\delta}{\sigma n} {\left(\!1 \!+\! \frac{4\delta}{n\varepsilon}\!\right)}}$;
	\item[ii)] For the Cheng-Yau operator $\square$, we get
	\begin{equation}
        \nonumber
		\lambda_{k+1}-\lambda_{k}\leq C_{n,\Omega}k^{\frac{\delta}{n\varepsilon}},
	\end{equation}	
	where $C_{n,\Omega} = 4 \lambda_{1}\sqrt{\!\frac{\delta}{\sigma n} {\left(\!1 \!+\! \frac{4\delta}{n\varepsilon}\!\right)}}$;
	\item[iii)] For drifted Laplacian operator $\Delta_{\eta}$, we have
	\begin{equation}
		\nonumber
        \lambda_{k+1}-\lambda_{k}\leq C_{n,\Omega}k^{\frac{1}{n}},
	\end{equation}
	where $\displaystyle C_{n,\Omega}=4 \left(\lambda_{1} + C_{0}\right)\sqrt{\!\frac{1}{ n} {\left(\!1 \!+\! \frac{4}{n}\!\right)}}$;
	\item[iv)]  For the Laplacian operator $\Delta$, we get
	\begin{equation}
        \nonumber
        \lambda_{k+1}-\lambda_{k}\leq C_{n,\Omega}k^{\frac{1}{n}},
	\end{equation}	
	where $\displaystyle C_{n,\Omega} = 4 \lambda_{1}\sqrt{\!\frac{1}{ n} {\left(\!1 \!+\! \frac{4}{n}\!\right)}}$.
	\end{enumerate}
\end{cor}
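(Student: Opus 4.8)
The plan is to adapt the commutator (trial--function) method to the weighted setting $d\mu = e^{-\eta}d\Omega$, in which $\mathcal{L}$ is self--adjoint and the associated Dirichlet form is $\mathcal{D}(\varphi,\varphi)=\int_\Omega\langle T(\nabla\varphi),\nabla\varphi\rangle\,d\mu$. Fix an $L^2(d\mu)$--orthonormal system of eigenfunctions $u_1,\dots,u_{k+1}$ and, for each Euclidean coordinate $x_\alpha$ ($\alpha=1,\dots,n$) and each $i\le k$, set $\varphi_{\alpha i}=x_\alpha u_i-\sum_{j=1}^k a_{\alpha ij}u_j$ with $a_{\alpha ij}=\int_\Omega x_\alpha u_i u_j\,d\mu$, so that $\varphi_{\alpha i}$ is $d\mu$--orthogonal to $u_1,\dots,u_k$. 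The Rayleigh--Ritz characterization then gives $\lambda_{k+1}\|\varphi_{\alpha i}\|^2\le\mathcal{D}(\varphi_{\alpha i},\varphi_{\alpha i})$, and the whole argument rests on evaluating the right--hand side.

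First I would establish the product rule $\mathcal{L}(fg)=f\mathcal{L}g+g\mathcal{L}f+2\langle T(\nabla f),\nabla g\rangle$, which with $f=x_\alpha$, $g=u_i$ yields the commutator identity
\[\mathcal{L}(x_\alpha u_i)=-\lambda_i x_\alpha u_i+2\langle T(e_\alpha),\nabla u_i\rangle+u_i\,\mathcal{L}x_\alpha,\qquad \mathcal{L}x_\alpha=\langle\tr(\nabla T)-T(\nabla\eta),e_\alpha\rangle.\]
Substituting this into $\mathcal{D}(\varphi_{\alpha i},\varphi_{\alpha i})=-\int\varphi_{\alpha i}\mathcal{L}\varphi_{\alpha i}\,d\mu$, summing over $\alpha$, and invoking the antisymmetry--type relation $2\int\langle T(e_\alpha),\nabla u_i\rangle u_j\,d\mu=(\lambda_i-\lambda_j)a_{\alpha ij}-\int u_iu_j\,\mathcal{L}x_\alpha\,d\mu$ (from integration by parts against the eigenvalue equation) should collapse the cross terms. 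The constants enter precisely here: the zeroth--order term $u_i\mathcal{L}x_\alpha$ together with the drift are disposed of by completing the square, which produces the shift $\Theta:=\frac{4C_0+T_0^2}{4\delta}$ with $T_0=\sup_\Omega|\tr(\nabla T)|$ and $C_0$ as defined; and the passage from the commutator norm $\sum_\alpha|\langle T(e_\alpha),\nabla u_i\rangle|^2=|T(\nabla u_i)|^2=\langle T^2\nabla u_i,\nabla u_i\rangle$ back to the energy $\langle T(\nabla u_i),\nabla u_i\rangle$ is governed by the elementary bound $\langle T^2X,X\rangle\le 2\delta\langle TX,X\rangle-\varepsilon\sigma|X|^2$, valid since $\varepsilon I\le T\le\delta I$ forces $\tau(2\delta-\tau)\ge\varepsilon\sigma$ for $\tau\in[\varepsilon,\delta]$; this is where $\sigma=2\delta-\varepsilon$ is born.

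Writing $\mu_i:=\lambda_i+\Theta$, the preceding step should deliver a Yang--type inequality for the shifted spectrum,
\[\sum_{i=1}^k(\mu_{k+1}-\mu_i)^2\le\frac{4\delta}{n\varepsilon}\sum_{i=1}^k(\mu_{k+1}-\mu_i)\,\mu_i,\]
the shift having absorbed all lower--order contributions. From this I would extract the gap in two stages, following Chen--Zheng--Yang: (i) the Cheng--Yang recursion applied to $\{\mu_i\}$ gives a Weyl--type bound $\mu_{k+1}\le B\,\mu_1\,k^{2\delta/(n\varepsilon)}$ with $B=B(n,\delta/\varepsilon)$; and (ii) a refined analysis of the inequality yields a gap estimate of the form $(\mu_{k+1}-\mu_k)^2\le c(n,\delta,\varepsilon,\sigma)\,\mu_1\,\mu_{k+1}$. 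Combining (i) and (ii), taking square roots, and recalling $\mu_1=\lambda_1+\Theta$ and $\mu_{k+1}-\mu_k=\lambda_{k+1}-\lambda_k$ should reproduce $\lambda_{k+1}-\lambda_k\le C_{n,\Omega}k^{\delta/(n\varepsilon)}$ with the stated constant, and specializing $T_0,C_0,\varepsilon,\delta$ then gives Corollary~\ref{cor_1}.

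I expect two delicate points. The first is the bookkeeping of terms absent for $\Delta$: because $x_\alpha$ is not $\mathcal{L}$--harmonic ($\mathcal{L}x_\alpha\neq0$) and $T$ is neither the identity nor parallel, evaluating $\mathcal{D}(\varphi_{\alpha i},\varphi_{\alpha i})$ generates the $\tr(\nabla T)$ and $\langle\nabla\eta,T(\cdot)\rangle$ contributions, and organizing them into the single sup--constant $\Theta$ by completing the square is what forces the precise forms of $C_0$ and $T_0$. The second, and genuinely harder, is the extraction of the sharp exponent $\delta/(n\varepsilon)$ rather than the naive $2\delta/(n\varepsilon)$ that the Yang--type inequality yields on its own: obtaining the square--root improvement requires the sharper frequency/counting argument of Chen--Zheng--Yang, whose adaptation to the $T$--weighted Euclidean setting (where the modulating plane waves are distorted by $T$) is the main obstacle, and it is there that the factor $\sqrt{\tfrac{\delta}{\sigma n}\bigl(1+\tfrac{4\delta}{n\varepsilon}\bigr)}$ must be tracked through the constants of stages (i) and (ii).
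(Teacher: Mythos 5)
Your overall two--stage architecture is the right one --- a bound of the form $(\lambda_{k+2}-\lambda_{k+1})^2\le c\,(\lambda_1+\Theta)\,\lambda_{k+2}$ combined with a Weyl--type bound $\lambda_{k+2}\le(1+\tfrac{4\delta}{n\varepsilon})(\lambda_1+\Theta)(k+1)^{2\delta/(n\varepsilon)}$ is exactly how the paper proceeds --- but the proposal has a genuine gap precisely where you flag ``the main obstacle.'' Your stage (ii), the gap--squared estimate, is not a ``refined analysis'' of the Yang--type inequality you derive in detail: in both Chen--Zheng--Yang and this paper it comes from an entirely separate mechanism, namely testing against the modulated eigenfunctions $g\,u_j$ with $g=e^{i\alpha x_l}$, a Parseval/Cauchy--Schwarz computation (the paper's Lemma~\ref{L3.2}), the sequence lemma (Lemma~\ref{L3.1}) applied to the coefficients $a_{js}=\int_\Omega g u_j u_s\,\mathrm{dm}$, and an optimization over the frequency $\alpha$ (Corollaries~\ref{C3.1} and~\ref{C3.2}). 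The commutator trial functions $\varphi_{\alpha i}=x_\alpha u_i-\sum_j a_{\alpha ij}u_j$ only yield the quadratic Yang inequality, hence stage (i); they cannot produce the factor $\tfrac{16}{\sigma n}$ nor the term $-\tfrac14\int(\mathcal{L}x_l)^2u_1^2-\tfrac12\int\langle\nabla(\mathcal{L}x_l),T(e_l)\rangle u_1^2$ whose summation over $l$ generates $C_0$ and $T_0^2/4$ in the final constant. Deferring that entire construction as an acknowledged obstacle leaves the proof incomplete, since it is the whole technical content of Sections~3--4.

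Separately, you are proving the wrong thing the hard way. The statement is Corollary~\ref{cor_1}, which the paper obtains from Theorem~\ref{T1.1} by pure substitution: for $\square$ and $\square_\eta$ one has $\dv T=0$, hence $\tr(\nabla T)=0$ and $T_0=0$; when $\eta$ is constant the expression defining $C_0$ vanishes identically, so $C_0=0$; and for $\Delta_\eta$ and $\Delta$ one has $T=I$, so $\varepsilon=\delta=1$ and $\sigma=2\delta-\varepsilon=1$. Plugging these into $C_{n,\Omega}=4\bigl(\lambda_1+\tfrac{4C_0+T_0^2}{4\delta}\bigr)\sqrt{\tfrac{\delta}{\sigma n}\bigl(1+\tfrac{4\delta}{n\varepsilon}\bigr)}$ gives items (i)--(iv) immediately. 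Since the corollary is stated ``under the same setup as in Theorem~\ref{T1.1},'' you are entitled to invoke that theorem; rebuilding it from scratch --- and then not completing the hardest step --- is both unnecessary and insufficient.
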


\begin{remark}
	Item iv) of Corollary~\ref{cor_1} extend the result obtained by  Zheng and Yang~\cite[Theorem~1.5]{Art} for the case of a gap of the consecutive eigenvalues of the Dirichlet problem on $\mathbb{R}^{n}$ for Laplacian operator $\Delta$.
\end{remark}

With some additional hypotheses it was possible to prove  Theorems~\ref{T1.2} and \ref{T1.3}.

\begin{theorem}\label{T1.2}	
	Let $\Omega$ be a bounded domain in Hyperbolic space $\mathbb{H}^n(-1)$ and $\lambda_{k}$ be the k-th $(k>1)$ eigenvalue of the Dirichlet eigenvalue Problem~\eqref{problem1}. If the drifting function $\eta$  is radially constant and $T$ is an $(1,1)$-tensor, such that $T(\partial_{n})=\psi\partial_{n}$ for some function radially constant $\psi\in C^{\infty}(M)$, then
	\begin{equation}
        \nonumber
		\lambda_{k+1}-\lambda_{k}\leq C_{n,\Omega}k^{\frac{\delta}{n\varepsilon}},
	\end{equation}
	where $C_{n,\Omega}$ depends of the $\Omega$ and the dimension $n$ and is given by 
	\begin{equation}
        \nonumber
		C_{n,\Omega} = \frac{4}{\sqrt{\sigma}}\left[\left(1 + \frac{4\delta}{n\varepsilon}\right) \left(\!\delta\lambda_{1} \!-\! \dfrac{\varepsilon^{2}}{4}\!\big(n\!-\!1\big)^{2}\!\right) \left(\lambda_{1} + \frac{n^{2}H_{0}^{2} + 4C_{0} + T_{0}^{2}}{4\delta}\right) \right]^{\frac{1}{2}},
	\end{equation}
	where $\sigma$, $C_{0}$, $T_{0}$ are as in Theorem \ref{T1.1} and  $H_0=\sup_\Omega |{\bf H}_T|$ with ${\bf H}_T=\frac{1}{n}tr(\alpha\circ T)$, $\alpha$ being the second fundamental form.
\end{theorem}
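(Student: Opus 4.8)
The plan is to rerun the commutator–trial–function machinery behind Theorem~\ref{T1.1}, altering only the geometric input so as to absorb the curvature of $\mathbb{H}^n(-1)$. Writing the space in the upper half-space model $\{x=(x_1,\dots,x_n):x_n>0\}$ with $g=x_n^{-2}\sum_i dx_i^2$ and letting $\partial_n=\nabla\ln x_n$ be the unit horospherical field, I would fix a family $\{\varphi_\alpha\}_{\alpha=1}^N$ of coordinate-type functions coming from an isometric realization of $\Omega$, so that the completeness relation $\sum_\alpha \nabla\varphi_\alpha\otimes\nabla\varphi_\alpha=\mathrm{Id}$ holds on $T\Omega$; this gives the pointwise identity $\sum_\alpha\langle T(\nabla\varphi_\alpha),\nabla\varphi_\alpha\rangle=\tr(T)$. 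For an $L^2(\Omega,e^{-\eta}d\Omega)$–orthonormal system of eigenfunctions $u_1,\dots,u_k$, I set $a_{ij}^\alpha=\int_\Omega \varphi_\alpha u_i u_j\,e^{-\eta}d\Omega$ and use the trial functions $\psi_{i\alpha}=\varphi_\alpha u_i-\sum_{j\le k}a_{ij}^\alpha u_j$, which are orthogonal to $u_1,\dots,u_k$. The product rule $\mathcal{L}(\varphi_\alpha u_i)=\varphi_\alpha\mathcal{L}u_i+u_i\mathcal{L}\varphi_\alpha+2\langle T(\nabla\varphi_\alpha),\nabla u_i\rangle$ together with the variational characterization $\lambda_{k+1}\int_\Omega\psi_{i\alpha}^2 e^{-\eta}d\Omega\le\int_\Omega\langle T(\nabla\psi_{i\alpha}),\nabla\psi_{i\alpha}\rangle e^{-\eta}d\Omega$ then yields, exactly as for Theorem~\ref{T1.1}, the fundamental inequality
\[
(\lambda_{k+1}-\lambda_i)\int_\Omega\psi_{i\alpha}^2\,e^{-\eta}d\Omega\le-\int_\Omega\psi_{i\alpha}\big(u_i\mathcal{L}\varphi_\alpha+2\langle T(\nabla\varphi_\alpha),\nabla u_i\rangle\big)e^{-\eta}d\Omega.
\]

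All the curvature dependence enters through $\mathcal{L}\varphi_\alpha$. In $\mathbb{R}^n$ the coordinate functions are affine, so $\mathrm{Hess}\,\varphi_\alpha=0$; in $\mathbb{H}^n(-1)$ the tangential Hessian of $\varphi_\alpha$ is instead governed by the second fundamental form $\alpha$, and $\dv(T(\nabla\varphi_\alpha))$ acquires the term $\tr(T\circ\mathrm{Hess}\,\varphi_\alpha)$. Squaring and summing over $\alpha$, the completeness relation collapses $\sum_\alpha\big(\tr(T\circ\mathrm{Hess}\,\varphi_\alpha)\big)^2$ into $|\tr(\alpha\circ T)|^2=n^2|{\bf H}_T|^2\le n^2H_0^2$, while the $\nabla T$ and $\nabla\eta$ contributions are carried through verbatim from Theorem~\ref{T1.1} and produce $T_0^2$ and $4C_0$. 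This is the origin of the numerator datum
\[
\sum_\alpha(\mathcal{L}\varphi_\alpha)^2\le n^2H_0^2+4C_0+T_0^2,
\]
which is exactly the quantity inside $\big(\lambda_1+\tfrac{n^2H_0^2+4C_0+T_0^2}{4\delta}\big)$. The hypotheses that $\eta$ and $\psi$ are radially constant and that $T(\partial_n)=\psi\partial_n$ are what make this step clean: they diagonalize $T$ with respect to the splitting $T\mathbb{H}^n=\mathbb{R}\partial_n\oplus(\partial_n)^\perp$ and annihilate the $\partial_n\eta$ and $\partial_n\psi$ terms.

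The genuinely hyperbolic ingredient is the factor $\delta\lambda_1-\tfrac{\varepsilon^2}{4}(n-1)^2$, which I would extract from a McKean-type identity. Since $\Delta\ln x_n=-(n-1)$ and $T(\nabla\ln x_n)=\psi\partial_n$ with $\psi,\eta$ radially constant, one gets $\mathcal{L}(\ln x_n)=-(n-1)\psi$ with $|\nabla\ln x_n|=1$. Testing this against $u_i^2\,e^{-\eta}$, integrating by parts (the boundary term vanishes because $u_i=0$ on $\partial\Omega$) and applying Cauchy–Schwarz with $\varepsilon\le\psi\le\delta$ gives $(n-1)\varepsilon\le 2\sqrt{\delta\lambda_i}$, hence $\delta\lambda_1\ge\tfrac{\varepsilon^2}{4}(n-1)^2$ and positivity of the factor. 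The same identity is the source of the negative curvature correction $-\tfrac{\varepsilon^2}{4}(n-1)^2$ that refines the naive gradient-energy bound $\delta\lambda_1$: in the Euclidean argument this term would instead appear as $\tfrac{\delta}{n}(\lambda_1+\cdots)$ after a $\tfrac1n$-averaging, and here that averaging gain is replaced by the shifted quantity $\delta\lambda_1-\tfrac{\varepsilon^2}{4}(n-1)^2$, the hyperbolic analog of subtracting the bottom $\tfrac{(n-1)^2}{4}$ of the spectrum of $\mathbb{H}^n(-1)$.

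Finally I would assemble a Yang-type inequality: summing the fundamental inequality over $\alpha$ and $i$, estimating the cross term $\int\psi_{i\alpha}\langle T(\nabla\varphi_\alpha),\nabla u_i\rangle$ by Cauchy–Schwarz, and feeding in $\sum_\alpha\langle T(\nabla\varphi_\alpha),\nabla\varphi_\alpha\rangle=\tr(T)$, the bound on $\sum_\alpha(\mathcal{L}\varphi_\alpha)^2$, and the McKean factor, with the constant $\sigma=2\delta-\varepsilon$ absorbed from the lower estimate of $\sum_\alpha\int\psi_{i\alpha}^2$. The exponent $\tfrac{\delta}{n\varepsilon}$ (effective dimension $n\varepsilon/\delta$) and the explicit constant $C_{n,\Omega}$ then follow from the Cheng–Yang recursion applied to this inequality exactly as in Theorem~\ref{T1.1}. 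I expect the main obstacle to be twofold: carrying out the Hessian computation in $\mathbb{H}^n(-1)$ so that $\sum_\alpha(\mathcal{L}\varphi_\alpha)^2$ collapses to precisely $n^2H_0^2+4C_0+T_0^2$ (the radial-constancy and $T(\partial_n)=\psi\partial_n$ hypotheses must be used at exactly this point), and tracking the interaction between the McKean identity and the Cauchy–Schwarz step so that the factor $\delta\lambda_1-\tfrac{\varepsilon^2}{4}(n-1)^2$ emerges with the stated sharp constant rather than a lossier one.
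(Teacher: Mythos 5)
Your proposal rebuilds the wrong half of the argument. The trial functions $\psi_{i\alpha}=\varphi_\alpha u_i-\sum_{j\le k}a_{ij}^\alpha u_j$ and the ``fundamental inequality'' you write down are the classical Payne--P\'olya--Weinberger/Yang scheme; in this paper that machinery lives entirely inside the cited Lemma~\ref{Yang-Ineq1} (proved in \cite{CJM}), whose only role is to supply the growth bound $\lambda_{k+1}\le(1+\tfrac{4\delta}{n\varepsilon})\big(\lambda_1+\tfrac{n^2H_0^2+4C_0+T_0^2}{4\delta}\big)k^{2\delta/(n\varepsilon)}$. That scheme, followed by the Cheng--Yang recursion, cannot produce a gap of order $k^{\delta/(n\varepsilon)}\sim\sqrt{\lambda_k}$: Yang's first inequality only controls $\lambda_{k+1}-\lambda_k$ at the order of $\lambda_k$ itself, i.e. $k^{2\delta/(n\varepsilon)}$, which is the square of the claimed bound. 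The ingredient your proposal omits is precisely the one that makes the theorem work, namely Lemma~\ref{L3.2} and Corollaries~\ref{C3.1}--\ref{C3.2}: one tests against the oscillating complex function $g=e^{i\alpha f}$, invokes the algebraic Lemma~\ref{L3.1} (which involves \emph{both} $\lambda_{k+1}$ and $\lambda_{k+2}$), and optimizes over the frequency $\alpha$ to arrive at $(\lambda_{k+2}-\lambda_{k+1})^2\le\frac{16}{\sigma}(\cdots)\,\lambda_{k+2}$ --- linear in $\lambda_{k+2}$ on the right, hence a gap of order $\sqrt{\lambda_{k+2}}$. Without this step the stated order of the estimate is unreachable.

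Once Corollary~\ref{C3.2} is available, the paper's actual proof of Theorem~\ref{T1.2} is much shorter than what you outline: take $f=\ln x_n$ in inequality \eqref{3.15}, use $T(\partial_n)=\psi\partial_n$ and the radial constancy of $\eta$ and $\psi$ to get $\mathcal{L}(\ln x_n)=-(n-1)\psi$ with the $\nabla(\mathcal{L}f)$-term vanishing, bound $\psi\ge\varepsilon$ to obtain the factor $\delta\lambda_1-\tfrac{\varepsilon^2}{4}(n-1)^2$, and then insert the bound on $\lambda_{k+2}$ from Lemma~\ref{Yang-Ineq1}. Your McKean-type remark correctly identifies where $-\tfrac{\varepsilon^2}{4}(n-1)^2$ comes from (it is $-\tfrac14\int(\mathcal{L}\ln x_n)^2u_1^2\,\mathrm{dm}$), and your Hessian discussion is essentially a sketch of the proof of Lemma~\ref{Yang-Ineq1} in \cite{CJM}; but neither substitutes for the missing exponential-trial-function step, so as written the proposal does not prove the theorem at the claimed order in $k$.
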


From Theorem~\ref{T1.2} we get the following corollary.
\begin{cor}\label{cor_2}
	Under the same setup as in Theorem~\ref{T1.2}, we have: 
	\begin{enumerate}
	\item[i)] For the drifted Cheng-Yau operator $\square_{\eta}$, we have
	\begin{equation}
        \nonumber
		\lambda_{k+1}-\lambda_{k}\leq C_{n,\Omega}k^{\frac{\delta}{n\varepsilon}},
	\end{equation}	
	where $\displaystyle C_{n,\Omega} =\frac{4}{\sqrt{\sigma}} \left[\left(1 + \frac{4\delta}{n\varepsilon}\right) \left(\!\delta\lambda_1 \!-\! \dfrac{\varepsilon^2}{4}\!\big(n\!-\!1\big)^2\!\right) \left(\lambda_{1} + \frac{n^2H_{0}^2 + 4C_{0}}{4\delta}\right) \right]^{\frac{1}{2}}$;
	\item[ii)] For the Cheng-Yau operator $\square$, we get
	\begin{equation}
        \nonumber
		\lambda_{k+1}-\lambda_{k}\leq C_{n,\Omega}k^{\frac{\delta}{n\varepsilon}},
	\end{equation}	
	where $\displaystyle C_{n,\Omega} = \frac{4}{\sqrt{\sigma}} \left[\left(1 + \frac{4\delta}{n\varepsilon}\right) \left(\!\delta\lambda_1 \!-\! \dfrac{\varepsilon^2}{4}\!\big(n\!-\!1\big)^2\!\right) \left(\lambda_{1} + \frac{n^2H_{0}^2}{4\delta}\right) \right]^{\frac{1}{2}}$;
	\item[iii)] For drifted Laplace–Beltrami operator $\Delta_{\eta}$, we have
	\begin{equation}
	      \nonumber
        \lambda_{k+1}-\lambda_{k}\leq C_{n,\Omega}k^{\frac{1}{n}},
	\end{equation}	
	where $\displaystyle C_{n,\Omega} = 4\left[\left(1 + \frac{4}{n}\right) \left(\!\lambda_1 \!-\! \dfrac{1}{4}\!\big(n\!-\!1\big)^2\!\right) \left(\lambda_{1} + \frac{n^2H_{0}^2 + 4C_{0}}{4}\right) \right]^{\frac{1}{2}}$;
	\item[iv)] For the Laplace-Beltrami operator $\Delta$, we get
	\begin{equation}
        \nonumber
		\lambda_{k+1}-\lambda_{k}\leq C_{n,\Omega}k^{\frac{1}{n}},
	\end{equation}	
	where $\displaystyle C_{n,\Omega} = 4\left[\left(1 + \frac{4}{n}\right) \left(\!\lambda_1 \!-\! \dfrac{1}{4}\!\big(n\!-\!1\big)^2\!\right) \left(\lambda_{1} + \frac{n^2H_{0}^2}{4}\right)\right]^{\frac{1}{2}}$.
	\end{enumerate}
\end{cor}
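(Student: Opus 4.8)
The plan is to derive Corollary~\ref{cor_2} as a direct specialization of the master estimate in Theorem~\ref{T1.2}, in which the general constant
$$
C_{n,\Omega} = \frac{4}{\sqrt{\sigma}}\left[\left(1 + \frac{4\delta}{n\varepsilon}\right) \left(\delta\lambda_{1} - \frac{\varepsilon^{2}}{4}(n-1)^{2}\right) \left(\lambda_{1} + \frac{n^{2}H_{0}^{2} + 4C_{0} + T_{0}^{2}}{4\delta}\right) \right]^{\frac{1}{2}}
$$
is rewritten under the additional structural hypotheses attached to each of the four operators. The whole argument reduces to recording which of the quantities $T_0$, $C_0$, $\varepsilon$, $\delta$, $\sigma$ simplify in each case, and then substituting into the displayed formula; no new analysis beyond Theorem~\ref{T1.2} is required. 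I would organize the four items into two pairs, treating the Cheng--Yau operators first and the Laplace--Beltrami operators second.

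For items (i) and (ii), the defining feature of the (drifted) Cheng--Yau operator is that $T$ is divergence-free, $\dv T=0$. The first step is to observe that $\tr(\nabla T)=\dv T$, so the hypothesis $\dv T=0$ forces $\tr(\nabla T)=0$ pointwise on $\Omega$, whence $T_0=\sup_\Omega|\tr(\nabla T)|=0$. Setting $T_0=0$ in the master constant collapses the factor $\frac{n^2H_0^2+4C_0+T_0^2}{4\delta}$ to $\frac{n^2H_0^2+4C_0}{4\delta}$, which is exactly the constant claimed in (i). For item (ii), the Cheng--Yau operator $\square$ corresponds to $\square_\eta$ with $\eta$ constant; then $\nabla\eta\equiv 0$, so both summands in the definition of $C_0$ vanish and $C_0=0$. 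Substituting $T_0=C_0=0$ yields the constant in (ii), completing this pair.

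For items (iii) and (iv), the operator is the (drifted) Laplace--Beltrami operator, obtained by taking $T=\mathrm{Id}$. The key step is that for the identity tensor one has $\langle T(X),X\rangle=|X|^2=1$ for every unit field $X$, so the pinching constants coincide: $\varepsilon=\delta=1$. Consequently $\sigma=2\delta-\varepsilon=1$, the exponent $\frac{\delta}{n\varepsilon}$ becomes $\frac{1}{n}$, and since $\nabla(\mathrm{Id})=0$ we again get $T_0=0$. Plugging $\varepsilon=\delta=\sigma=1$ and $T_0=0$ into the master constant gives the expression in (iii). Finally, item (iv) is the case $T=\mathrm{Id}$ together with $\eta$ constant, so as in (ii) we also have $C_0=0$; substituting this further reduces the constant to the form stated in (iv).

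There is no genuine obstacle here: the corollary is a bookkeeping specialization of Theorem~\ref{T1.2}. The one point that deserves explicit verification, and which I would state as a short lemma-like remark rather than leave implicit, is the identity $\tr(\nabla T)=\dv T$ underlying the conclusion $T_0=0$ for the Cheng--Yau operators; this is the only place where a structural fact about $T$ (as opposed to a mere substitution of numerical values) is invoked. Everything else follows by inserting $\varepsilon=\delta=1$, $\sigma=1$, $T_0=0$, or $C_0=0$ into the already-established bound.
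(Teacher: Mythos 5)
Your proposal is correct and follows essentially the same route as the paper, which derives Corollary~\ref{cor_2} by the substitutions recorded in the remark after Theorem~\ref{T1.1}: $T_0=0$ for the Cheng--Yau operators (since $\dv T=0$ is equivalent to $\tr(\nabla T)=0$ by the symmetry of $\nabla_X T$), $\varepsilon=\delta=\sigma=1$ for the Laplace--Beltrami operators, and $C_0=0$ when $\eta$ is constant. The only point worth polishing is in item (ii): the vanishing of $C_0$ there uses both $\nabla\eta=0$ and $\tr(\nabla T)=0$, the latter coming from the divergence-free hypothesis rather than from the constancy of $\eta$ alone.
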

	
\begin{remark}
	Item iv) of Corollary~\ref{cor_2} generalizes the result obtained by Chen, Zheng and Yang in \cite[Corollary~1.6]{Art} for the case of the gap for eigenvalues of Dirichlet problem on $\mathbb{H}^{n}$ for Laplace-Beltrami laplacian $\Delta$.
\end{remark}

For the next result, we should remember that an $n$-dimensional ($n\geq2$) simply connected smooth manifold with a geodesically complete Riemannian metric $\langle\cdot,\cdot\rangle$ of non positive sectional curvature has been called a Cartan-Hadamard manifold after the Cartan-Hadamard theorem, which says that a simply connected geodesically complete Riemannian manifold with nonpositive sectional curvatures $M^n$ has the same structure and topology of the Euclidean space $\mathbb{R}^{n}$. The most simple examples of Cartan-Hadamard manifold are Euclidean space and the Hyperbolic space, both with its respective canonical metrics.  More examples can be found in Shiga~\cite{Shi}.

We will call \textbf{pinched Cartan-Hadamard} manifold the Cartan-Hadamard manifolds that its sectional curvature $-\kappa_{1}^2\leq{K}\leq-\kappa_{2}^2$, for some constants $0\leq\kappa_{2}\leq\kappa_{1}$.

We say that a tensor $T$ is radially parallel if it is parallel in radial direction, that is, if $\nabla_{\partial r} T=0$. In particular, for each vector field $X$ such that $\nabla_{\partial r} X=0$ we say that it is a radially parallel field.
    
The next result proves Conjecture~\ref{Cj2}	for the case where $M$ is a pinched Cartan-Hadamard manifold with some hypothesis on $T$. This proof is inspired by the techniques used in the proofs of Proposition~2 in Fonseca and Gomes~\cite{FG} and Corollary~1.7 in Chen, Zheng and Yang~\cite{Art}.

\begin{theorem}\label{T1.3}	
    Let $M$ be a pinched $n$-dimensional Cartan-Hadamard manifold, $\Omega\subset M$ be a bounded domain and $\lambda_{k}$ the k-th $(k>1)$ eigenvalue of the \eqref{problem1} for operator $\square_{\eta}$, with $u_{k}$ its corresponding eigenfunction. Fix an origin $o\in{M}\setminus\overline{\Omega}$ and let $r(x)$ the distance function from $o$, such that $\partial_r$ is an eigenvector of $T$. If $T$ is radially parallel, then
	\begin{equation}
		\nonumber
        \lambda_{k+1}-\lambda_{k}\leq C_{n,\Omega}k^{\frac{\delta}{n\varepsilon}},
	\end{equation}
	where $C_{n,\Omega}$ depends of $\Omega$ and of the dimension $n$, which is given by
	\begin{equation}
    \nonumber
    \begin{split}
        C_{n,\Omega}=&\frac{4}{\sqrt{\sigma}}\left(\delta\lambda_{1}\!+\!\frac{[2(n-1)\delta^{2}-(2n-3) \varepsilon^{2}]\kappa_{1}^{2} - [n^2-2n+2] \varepsilon^{2}\kappa_{2}^{2}+2\delta^{2}\eta_{1}}{4}\right.\\
	    &\left.+\frac{\delta^2\eta_{r}(n-1)(\kappa_1 + \frac{1}{d})}{2}+ \frac{a(n,T)}{4d^{2}}\right)^{\!\!\frac{1}{2}}\!\!\!\left(1+\frac{4\delta}{n\varepsilon}\right)^{\!\!\frac{1}{2}}\!\!\left(\lambda_{1}\!+\!\frac{n^{2}H_{0}^{2}+4C_{0}}{4\delta}\right)^{\!\!\frac{1}{2}},
    \end{split}
	\end{equation}
	where $\eta_{1}=\sup_{\overline{\Omega}}|\nabla^{2}\eta(\partial{r},\partial{r})|$, $\eta_{r}=\max_{\overline{\Omega}}|\langle \nabla\eta,\partial_{r}\rangle|$, $d=dist(\Omega,o)$,
	\begin{equation}
    \nonumber
		a(n,T):=\left\{ \begin{array}{cl}
		0, & \mbox{ se  }\ 2(n-1)\delta^{2}-(n-1)^{2}\varepsilon^{2}\leq0, \\[5pt]
		2(n-1)\delta^{2}-(n-1)^{2}\varepsilon^{2}, & \mbox{ se }\ 2(n-1)\delta^{2}-(n-1)^{2}\varepsilon^{2} >0,
		\end{array}\right. 
	\end{equation}
	and $\sigma$, $H_{0}$, $C_{0}$ are as in Theorem \ref{T1.2}.
\end{theorem}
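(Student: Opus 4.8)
The plan is to follow the trial-function scheme used for Theorems~\ref{T1.1} and \ref{T1.2}, replacing the Euclidean coordinate functions (resp. the hyperbolic height) by the distance function $r$ from the fixed origin $o\in M\setminus\overline{\Omega}$. Since $o$ lies outside $\overline{\Omega}$, the function $r$ is smooth on a neighborhood of $\overline{\Omega}$ with $|\nabla r|=1$ there, so for each eigenfunction $u_i$ I would form the trial functions $\phi_i=r\,u_i-\sum_{j=1}^{i}a_{ij}u_j$, with $a_{ij}=\int_\Omega e^{-\eta}r\,u_iu_j\,d\Omega$, so that $\phi_i$ is orthogonal to $u_1,\dots,u_i$ in $L^2(\Omega,e^{-\eta}d\Omega)$ and hence admissible for $\lambda_{i+1}$ in the Rayleigh--Ritz characterization. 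Feeding these into that characterization and using $\square_{\eta}(r\,u_i)=r\,\square_{\eta}u_i+u_i\,\square_{\eta}r+2\langle T(\nabla r),\nabla u_i\rangle$ produces, after the standard manipulations, a Yang-type inequality of the form $\sum_{i=1}^k(\lambda_{k+1}-\lambda_i)^2\int_\Omega e^{-\eta}\langle T(\nabla r),\nabla r\rangle u_i^2\,d\Omega\le\sum_{i=1}^k(\lambda_{k+1}-\lambda_i)\int_\Omega e^{-\eta}\,W\,u_i^2\,d\Omega$, where $W$ collects $|T(\nabla r)|^2$, $(\square_{\eta}r)^2$ and the cross terms. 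The hypotheses that $\partial_r$ be an eigenvector of $T$ and that $T$ be radially parallel are exactly what make $\langle T(\nabla r),\nabla r\rangle$, $|T(\nabla r)|^2$ and $\square_{\eta}r$ pointwise computable: writing $T(\partial_r)=\mu\,\partial_r$ one has $\varepsilon\le\mu\le\delta$, and $\dv T=0$ (which holds for $\square_{\eta}$, whence $T_0=0$) reduces $\dv(T(\nabla r))$ to $\tr(T\circ\nabla^2 r)$.

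The second step is the curvature input. Because $M$ is pinched, $-\kappa_1^2\le K\le-\kappa_2^2$, the Hessian comparison theorem bounds $\nabla^2 r$ on $\partial_r^{\perp}$ between $\kappa_2\coth(\kappa_2 r)$ and $\kappa_1\coth(\kappa_1 r)$, while $\nabla^2 r(\partial_r,\cdot)=0$. Combining this with $T(\partial_r)=\mu\,\partial_r$ and radial parallelism gives two-sided bounds for $\tr(T\circ\nabla^2 r)$, hence for $\square_{\eta}r=\tr(T\circ\nabla^2 r)-\langle\nabla\eta,T(\nabla r)\rangle$; the drift contribution is controlled through $\eta_1=\sup_{\overline{\Omega}}|\nabla^2\eta(\partial_r,\partial_r)|$ and $\eta_r=\max_{\overline{\Omega}}|\langle\nabla\eta,\partial_r\rangle|$. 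Using the elementary estimate $\kappa\coth(\kappa r)\le\kappa+1/r$ together with $r\ge d=\mathrm{dist}(\Omega,o)$ converts the $\coth$ factors into the terms $\kappa_1$, $\kappa_2$, $1/d$ and $1/d^2$ appearing in $C_{n,\Omega}$; in particular, squaring $\square_{\eta}r$ produces a term with coefficient $2(n-1)\delta^2-(n-1)^2\varepsilon^2$, and bounding its $1/r^2$-part by $1/d^2$ only when this coefficient is positive is precisely what the piecewise quantity $a(n,T)$ records.

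With these pointwise bounds in hand I would substitute into the Yang-type inequality: the left-hand integrand is bounded below using $\langle T(\nabla r),\nabla r\rangle=\mu\ge\varepsilon$, which, combined with the upper bound $|T(\nabla r)|^2\le\delta^2$ in the first term of $W$, is responsible for the appearance of $\sigma=2\delta-\varepsilon$ and the prefactor $4/\sqrt{\sigma}$, while the right-hand side factors into a curvature/drift part — giving the first parenthesis $\delta\lambda_1+\tfrac{1}{4}\big([2(n-1)\delta^2-(2n-3)\varepsilon^2]\kappa_1^2-[n^2-2n+2]\varepsilon^2\kappa_2^2+2\delta^2\eta_1\big)+\cdots$ — and an eigenvalue part $\lambda_1+\frac{n^2H_0^2+4C_0}{4\delta}$ that already appeared in Theorem~\ref{T1.2}. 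Finally, exactly as in \cite{Art} and in the proof of Theorem~\ref{T1.1}, I would feed the resulting recursion into the algebraic lemma of Chen, Zheng and Yang to extract the polynomial gap bound $\lambda_{k+1}-\lambda_k\le C_{n,\Omega}k^{\delta/(n\varepsilon)}$, the power $\delta/(n\varepsilon)$ arising from the ratio of the $\delta$-upper and $\varepsilon$-lower bounds of $T$ in dimension $n$.

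The main obstacle is the sharp estimation of $(\square_{\eta}r)^2$ and the bookkeeping needed to assemble all curvature, drift and tensor contributions into the stated constant. Unlike the constant-curvature case of Theorem~\ref{T1.2}, where $\nabla^2 r$ is explicit, here one only has the two-sided comparison, so one must track separately the upper and lower Hessian bounds through the squared term and combine them with the radial derivatives of the tensor eigenvalue $\mu$, controlled by $\nabla_{\partial_r}T=0$; obtaining the precise coefficients $2(n-1)\delta^2-(2n-3)\varepsilon^2$ and $n^2-2n+2$, the threshold defining $a(n,T)$, and the correct $1/d$ versus $1/d^2$ scaling is the delicate part of the argument.
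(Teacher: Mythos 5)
Your proposal has a genuine structural gap in its first step. The trial functions $\phi_i=r\,u_i-\sum_j a_{ij}u_j$ fed into the Rayleigh--Ritz characterization produce the \emph{classical} Yang-type inequality $\sum_{i=1}^k(\lambda_{k+1}-\lambda_i)^2t_i\le\sum_{i=1}^k(\lambda_{k+1}-\lambda_i)s_i$, which controls $\lambda_{k+1}$ itself and, via the Cheng--Yang recursion, yields at best $\lambda_{k+1}-\lambda_k\le\lambda_{k+1}\le Ck^{2\delta/(n\varepsilon)}$ --- the \emph{wrong} exponent. The whole point of the theorem is the halved exponent $k^{\delta/(n\varepsilon)}$, and that cannot be extracted from the classical Yang inequality by ``the algebraic lemma of Chen, Zheng and Yang'': Lemma~\ref{L3.1} is not applied to a Yang recursion at all, but to the tail sums $\sum_{s\ge k+1}(\lambda_s-\lambda_j)|a_{js}|^2$ arising from the \emph{complex} test function $g=e^{i\alpha r}$ in Lemma~\ref{L3.2}. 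It is the optimization over $\alpha$ in Corollary~\ref{C3.1} and the resulting inequality $(\lambda_{k+2}-\lambda_{k+1})^2\le\frac{16}{\sigma}(\cdots)\lambda_{k+2}$ of Corollary~\ref{C3.2} that put the \emph{square} of the gap on the left; only afterwards is the Yang-type first inequality (Lemma~\ref{Yang-Ineq1}) invoked to bound $\lambda_{k+2}$ by $(k+1)^{2\delta/(n\varepsilon)}$, whose square root gives the stated power. Your outline collapses these two distinct mechanisms into one and would not reach the claimed exponent.

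The second step is also missing the key identity. The quantity to be estimated in \eqref{3.15} with $f=r$ is $-(\square_\eta r)^2-2\langle\nabla(\square_\eta r),T(\partial_r)\rangle$, and the paper handles the radial-derivative term by the Bochner-type formula \eqref{Bo} applied to $|\nabla r|^2\equiv1$: since $T$ is radially parallel and $T(\partial_r)=\psi_n\partial_r$, one gets $-\langle\nabla(\square_\eta r),T(\partial_r)\rangle=\psi_n R_{\eta,T}(\partial_r,\partial_r)+\psi_n\langle\nabla^2r,\nabla^2r\circ T\rangle$. This is precisely where the sectional-curvature pinching enters (through $R_{\eta,T}$, producing the $-[n^2-2n+2]\varepsilon^2\kappa_2^2$ and $2\delta^2\eta_1$ contributions) and where the two hypotheses on $T$ are actually used; Rauch comparison is then applied to the eigenvalues $h_j$ of $\nabla^2 r$ in the term $2\psi_n\langle\nabla^2r,\nabla^2r\circ T\rangle-(\square r)^2$, and the cross term $2\langle\nabla\eta,T(\partial_r)\rangle\square r$ is controlled by the Fonseca--Gomes estimate \eqref{4.19}. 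Applying Hessian comparison directly to $\square_\eta r$ and ``squaring,'' as you suggest, does not see the curvature tensor term at all and would not produce the stated coefficients. Your treatment of the $1/d^2$ term via the sign of $2(n-1)\delta^2-(n-1)^2\varepsilon^2$ is, however, in the right spirit and matches the paper's case analysis defining $a(n,T)$.
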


From Theorem~\ref{T1.3} we get the following corollary.
\begin{cor}\label{cor_3}
	Under the same setup as in Theorem~\ref{T1.3}, we have: 
	\begin{enumerate}
	\item[i)] For the Cheng-Yau operator $\square$, we get
	\begin{equation}
        \nonumber
		\lambda_{k+1}-\lambda_{k}\leq C_{n,\Omega}k^{\frac{\delta}{n\varepsilon}},
	\end{equation}	
	where 
    \begin{equation}
    \nonumber
    \begin{split}
        C_{n,\Omega}=\frac{4}{\sqrt{\sigma}}&\left(\delta\lambda_{1}\!+\!\frac{[2(n-1)\delta^{2}-(2n-3) \varepsilon^{2}]\kappa_{1}^{2} - [n^2-2n+2] \varepsilon^{2}\kappa_{2}^{2}}{4}\right.\\
	    &\left.\quad+\frac{a(n,T)}{4d^{2}}\right)^{\!\!\frac{1}{2}}\!\!\!\left(1+\frac{4\delta}{n\varepsilon}\right)^{\!\!\frac{1}{2}}\!\!\left(\lambda_{1}\!+\!\frac{n^{2}H_{0}^{2}}{4\delta}\right)^{\!\!\frac{1}{2}};
    \end{split}
    \end{equation}
	\item[ii)] For drifted Laplace–Beltrami operator $\Delta_{\eta}$, we have
	\begin{equation}
    \nonumber
	   \lambda_{k+1}-\lambda_{k}\leq C_{n,\Omega}k^{\frac{1}{n}},
	\end{equation}	
	where
    \begin{equation}
    \nonumber
    \begin{split}
        C_{n,\Omega}=4&\left(\lambda_{1}\!+\!\frac{\kappa_{1}^{2} - [n^2-2n+2]\kappa_{2}^{2}+2\eta_{1}}{4}+\frac{\eta_{r}(n-1)(\kappa_1 + \frac{1}{d})}{2}\right.\\
	    &\left.\quad+ \frac{a(n)}{4d^{2}}\right)^{\!\!\frac{1}{2}}\!\!\!\left(1+\frac{4}{n}\right)^{\!\!\frac{1}{2}}\!\!\left(\lambda_{1}\!+\!\frac{n^{2}H_{0}^{2}}{4}\right)^{\!\!\frac{1}{2}},
    \end{split}
	\end{equation}
    with $a(n):=\left\{ 
    \begin{array}{cl}
		0, & \mbox{ se  }\ n\geq3,\\[5pt]
		(n-1)(3-n), & \mbox{ se }\ n<3;
	\end{array}\right.$
	\item[iii)] For the Laplace-Beltrami operator $\Delta$, we get
	\begin{equation}
		\nonumber
        \lambda_{k+1}-\lambda_{k}\leq C_{n,\Omega}k^{\frac{1}{n}},
	\end{equation}	
	where 
    \begin{equation}
		\nonumber
        C_{n,\Omega}=4\!\left(\lambda_{1}\!+\!\frac{\kappa_{1}^{2} - [n^2-2n+2]\kappa_{2}^{2}}{4}+ \frac{a(n)}{4d^{2}}\right)^{\!\!\frac{1}{2}}\left(1+\frac{4}{n}\right)^{\!\!\frac{1}{2}}\!\!\left(\lambda_{1}\!+\!\frac{n^{2}H_{0}^{2}}{4}\right)^{\!\!\frac{1}{2}},
  \end{equation}
    with $a(n):=\left\{ 
    \begin{array}{cl}
		0, & \mbox{ se  }\ n\geq3, \\[5pt]
		(n-1)(3-n), & \mbox{ se }\ n<3.
	\end{array}\right.$
	\end{enumerate}
\end{cor}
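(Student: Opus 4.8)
The plan is to follow the variational machinery behind Theorems~\ref{T1.1} and \ref{T1.2}, replacing the Euclidean coordinate functions by trial functions adapted to the radial geometry of $M$, and then to absorb the curvature of $M$ through Hessian/Laplacian comparison. Concretely, fix an orthonormal basis $\{E_\alpha\}_{\alpha=1}^n$ of $T_oM$ and let $f_\alpha$ be the associated normal-coordinate functions $f_\alpha(p)=\langle \exp_o^{-1}(p),E_\alpha\rangle$, which are globally smooth on $M$ because $M$ is Cartan--Hadamard. For each eigenfunction $u_i$ ($1\le i\le k$) and each $\alpha$, I would orthonormalize $f_\alpha u_i$ against $u_1,\dots,u_k$ in $L^2(\Omega,e^{-\eta}d\Omega)$, feed the resulting trial functions into the Rayleigh--Ritz characterization of $\lambda_{k+1}$, and expand using the product rule $\mathcal L(f_\alpha u_i)=2\langle T(\nabla f_\alpha),\nabla u_i\rangle+u_i\,\mathcal L f_\alpha-\lambda_i f_\alpha u_i$. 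Summing over $\alpha$ and $i$ and running the same Cheng--Yang-type recursion as in Theorem~\ref{T1.1} produces an inequality of the form
\begin{equation}
\nonumber
\lambda_{k+1}-\lambda_{k}\le \frac{4}{\sqrt\sigma}\,\Big(\sup_\Omega A\Big)^{\frac12}\Big(1+\frac{4\delta}{n\varepsilon}\Big)^{\frac12}\Big(\lambda_1+\sup_\Omega B\Big)^{\frac12}k^{\frac{\delta}{n\varepsilon}},
\end{equation}
where $A$ collects the second-order data of the $f_\alpha$ (the ``kinetic'' factor) and $B$ collects the first-order data $\mathcal L f_\alpha$ (the ``$\mathcal L f$'' factor). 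The remaining task is to bound $A$ and $B$ on $\Omega$ by the geometric constants in the statement.

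For the factor $B$, I would use $\mathcal L f_\alpha=\dv(T(\nabla f_\alpha))-\langle\nabla\eta,T(\nabla f_\alpha)\rangle$ together with $\dv T=0$ (since $\mathcal L=\square_\eta$, whence $T_0=0$), which turns $\dv(T(\nabla f_\alpha))$ into $\tr(\mathrm{Hess}\,f_\alpha\circ T)$; after summing over $\alpha$ this is controlled by the $T$-weighted mean curvature $\mathbf H_T$ of the geodesic spheres, yielding the $n^2H_0^2$ term, while completing the square in the $\eta$-terms produces the constant $C_0$, exactly as in Theorems~\ref{T1.1}--\ref{T1.2}. This gives $\sup_\Omega B\le \lambda_1+\frac{n^2H_0^2+4C_0}{4\delta}$, the third factor of $C_{n,\Omega}$.

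The heart of the proof is the factor $A$. Here I would invoke the hypotheses that $\partial_r$ is an eigenvector of $T$ (so $T\partial_r=\mu\,\partial_r$ with $\varepsilon\le\mu\le\delta$, decoupling radial from spherical directions) and that $T$ is radially parallel ($\nabla_{\partial_r}T=0$, so no derivative of $T$ along geodesics issuing from $o$ appears). Writing the $f_\alpha$ through the position field $r\,\partial_r$, their Hessians are governed by $\mathrm{Hess}\,r$, which on a pinched Cartan--Hadamard manifold satisfies the comparison estimates $\kappa_2\coth(\kappa_2 r)\,\Pi\le \mathrm{Hess}\,r\le \kappa_1\coth(\kappa_1 r)\,\Pi$ on the orthogonal complement of $\partial_r$, where $\Pi=\mathrm{Id}-dr\otimes\partial_r$. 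Tracking the ellipticity bounds $\varepsilon,\delta$ through these inequalities, and using $r\ge d$ on $\Omega$ together with $\coth(\kappa r)\le 1+\tfrac{1}{\kappa d}$ to convert the $\coth$-terms into constants plus $O(1/d)$ and $O(1/d^2)$ corrections, should produce precisely the coefficients $2(n-1)\delta^2-(2n-3)\varepsilon^2$ for $\kappa_1^2$ and $-(n^2-2n+2)\varepsilon^2$ for $\kappa_2^2$, the drift contributions $2\delta^2\eta_1$ and $\tfrac{\delta^2\eta_r(n-1)(\kappa_1+1/d)}{2}$, and the remainder $\tfrac{a(n,T)}{4d^2}$; the case split defining $a(n,T)$ records whether the sign of $2(n-1)\delta^2-(n-1)^2\varepsilon^2$ permits dropping a term or forces the $1/d^2$ correction.

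I expect the bookkeeping in the factor $A$ to be the main obstacle: one must sum the comparison estimates over the $n$ trial functions while keeping the $\varepsilon,\delta$-dependence sharp, and the two radial hypotheses on $T$ are exactly what render the cross-terms $\langle T(\nabla f_\alpha),\nabla f_\beta\rangle$ and the radial derivatives of $T$ tractable. Once $A$ and $B$ are bounded as above, substituting into the master inequality and collecting terms yields the stated $C_{n,\Omega}$, and specializing $\eta$ constant or $T=\mathrm{Id}$ (so $\varepsilon=\delta=1$) recovers Corollary~\ref{cor_3}.
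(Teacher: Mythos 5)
There is a genuine mismatch here, on two levels. First, the paper's own ``proof'' of Corollary~\ref{cor_3} is a one-line specialization of Theorem~\ref{T1.3}: for $\square$ you take $\eta$ constant, so $\eta_1=\eta_r=0$ and $C_0=0$; for $\Delta_\eta$ you take $T=\mathrm{Id}$, so $\varepsilon=\delta=1$, $\sigma=1$, $T_0=0$ and $2(n-1)\delta^2-(n-1)^2\varepsilon^2=(n-1)(3-n)$, which turns $a(n,T)$ into the stated $a(n)$; for $\Delta$ you do both. No new analysis is needed, and your proposal never makes this observation -- instead you set out to re-derive the entire estimate from scratch, which is not what the corollary asks for and not what the paper does.

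Second, the from-scratch route you sketch would not go through as described. The gap estimate in this paper does not come from a Rayleigh--Ritz/Cheng--Yang recursion applied to trial functions $f_\alpha u_i$: that machinery yields the Yang-type upper bound $\lambda_{k+1}\leq(1+\tfrac{4\delta}{n\varepsilon})k^{2\delta/(n\varepsilon)}\upsilon_1$ of Lemma~\ref{Yang-Ineq1} (where, note, $H_0$ is the sup of the $T$-weighted mean curvature of an isometric immersion of $M$ into $\mathbb{R}^m$, not of geodesic spheres), but it does not control $\lambda_{k+1}-\lambda_k$. The gap factor comes from the entirely different mechanism of Lemmas~\ref{L3.1}--\ref{L3.2} and Corollary~\ref{C3.2}, which requires a \emph{single} real test function with $|\nabla f|^2=1$; in the Cartan--Hadamard setting the paper takes $f=r$, the distance from $o$, and then estimates $-(\square_\eta r)^2-2\langle\nabla(\square_\eta r),T(\partial_r)\rangle$ via the Bochner formula~\eqref{Bo} and Rauch comparison. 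Your normal-coordinate functions $f_\alpha(p)=\langle\exp_o^{-1}(p),E_\alpha\rangle$ do not have unit gradient away from $o$ on a curved manifold, so they cannot be substituted into Corollary~\ref{C3.2}, and summing comparison estimates over $n$ such functions is not how the coefficients $2(n-1)\delta^2-(2n-3)\varepsilon^2$ and $-(n^2-2n+2)\varepsilon^2$ arise -- they come from diagonalizing $\nabla^2 r$ and bounding $2\psi_n\langle\nabla^2 r,\nabla^2 r\circ T\rangle-(\square r)^2$ for the one function $r$. So even as a re-proof of Theorem~\ref{T1.3}, the proposal conflates the two separate ingredients and rests on trial functions that fail the hypothesis the key corollary needs.
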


\begin{remark}
	Item iii) of Corollary~\ref{cor_3} generalizes the result obtained by Chen, Zheng and Yang in \cite[Corolário~1.7]{Art} for the case of the gap for eigenvalues of Dirichlet problem on a pinched $n$-dimensional ($n\geq3$) Cartan-Hadamard manifold for Laplace-Beltrami laplacian $\Delta$.
\end{remark}

\section{Preliminaries}\label{preliminaries}

Throughout this paper, we are frequently using the identification of a $(0,2)$-tensor $T:\mathfrak{X}(M)\times\mathfrak{X}(M)\to~C^{\infty}(M)$ with its associated $(1,1)$-tensor $T:\mathfrak{X}(M)\to\mathfrak{X}(M)$ by the expression
\begin{equation}
    \nonumber
    \langle T(X), Y \rangle = T(X, Y).
\end{equation}
Moreover, the tensor $\langle\cdot,\cdot\rangle$ will be identified with the identity $I$ in $\mathfrak{X}(M)$. Since, 
\begin{equation}\label{2.1}
     \varepsilon \leq\langle T(X), X \rangle\leq\delta, 
\end{equation}   
for any unit vector field $X$ on $\Omega$, we have
\begin{equation}\label{T-property}
    \varepsilon  \langle T(Y), Y\rangle \leq |T(Y)|^2 \leq \delta  \langle T(Y), Y\rangle \quad \mbox{for all} \quad Y \in \mathfrak{X}(M),
\end{equation}
consequently, 
\begin{equation}
    \nonumber
    \varepsilon^2|\nabla \eta|^2 \leq |T(\nabla\eta)|^2 \leq \delta^2|\nabla\eta|^2.
\end{equation}
	
If $T$ is a symmetric positive definite  $(1,1)$-tensor on $M$, we can see that the tensor $\nabla_{X}T$ also is for each $X\in\mathfrak{X}(M)$, that is, 
\begin{equation}
    \nonumber
	\left\langle (\nabla_{X}T)Y, Z\right\rangle = \left\langle Y,  (\nabla_{X}T)Z\right\rangle.
\end{equation}
Moreover, we define the vector field $\mathrm{tr}(\nabla T)\in\mathfrak{X}(M)$ by
\begin{equation}
    \nonumber
	\mathrm{tr}(\nabla T) := \sum_{j=1}^{n}\nabla T(e_j,e_j) = \sum_{j=1}^{n}\left(\nabla_{e_{j}}T(e_j) - T(\nabla_{e_{j}}e_j)\right),
\end{equation}	
where $\{e_1,\cdots,e_n\}$ is an orthonormal local frame in $p\in M^n$.
Given the $(1,1)$-tensors $T$ and $S$  in $M^n$ and their respective adjoints $T^{*}$ and $S^{*}$ we recall that the Hilbert-Schmidt inner product is defined by
\begin{equation}
    \nonumber
	\langle T,S \rangle := \mathrm{tr}(TS^*) = \sum_{j=1}^{n} \langle TS^*(e_j),e_j \rangle = \sum_{j=1}^{n}\langle S^*(e_j),T^{*}(e_j) \rangle = \sum_{j=1}^{n}\langle T(e_j),S(e_j) \rangle.
\end{equation} 
	
\begin{definition}
	Let $T$ an $(1,1)$-tensor in $M^n$, let us define the divergence of the $T$ as being $(0,1)$-tensor given by
	\begin{equation}
        \nonumber
		(divT)(v)_{p} = \mathrm{tr}(w \mapsto (\nabla_w T)(v)_{p}),
	\end{equation}
	where $p\in{M}$ and $v\in{T_{p}M}$. Moreover, we say that the tensor $T$ is divergence-free when $divT=0$.
\end{definition}
	
Another important property is the following 
\begin{equation}\label{2.3}
	(divT)(Z) = div(T(Z)) - \left\langle \nabla Z , T^{*}\right\rangle,
\end{equation}
for $Z\in\mathfrak{X}(M)$.
 
Recall that, the Cheng-Yau operator is given by
\begin{equation}\label{SQ}
	\square f = \mathrm{tr}(\nabla^{2} f\circ T) = \left\langle \nabla^{2} f, T \right\rangle,
\end{equation}
where $f\in\mathbb{C}^{\infty}(M)$ and $T$ is a symmetric $(1,1)$-tensor. When $M^n$ is compact and orientable, they proved that the operator $\square$ is self-adjoint if, only if, $T$ is divergence-free.

From \eqref{1.2}, \eqref{2.3} and \eqref{SQ} we observe that
\begin{equation}\label{2.5}
   \mathcal{L}f = \square f - \left\langle div_{\eta}T, \nabla f \right\rangle =  \square_{\eta} f - \left\langle div T, \nabla f \right\rangle,
\end{equation}
where $\square_{\eta}f=\left\langle\nabla^{2}f,T\right\rangle-\left\langle\nabla\eta,T(\nabla f)\right\rangle$. In particular, if $\square$ operator is self-adjoint the equation \eqref{2.5} becomes
\begin{equation}
    \nonumber
	\mathcal{L}f = \square_{\eta} f = \left\langle \nabla^{2} f, T \right\rangle - \left\langle \nabla \eta, T(\nabla f) \right\rangle ,
\end{equation}
which is a first-order perturbation of the Cheng-Yau operator. 
	
One of the most important tool to obtain lower estimates for the first positive eigenvalue of the operator $\mathcal{L}$ in compact Riemannian manifolds is a Bochner type formula proven by Gomes and Miranda~\cite{GM} which is given by
\begin{equation}\label{Bochner}
	\frac{1}{2}\mathcal{L}(|\nabla{f}|^2) =\langle\nabla(\mathcal{L}f),\nabla{f}\rangle+R_{\eta,T}(\nabla{f},\nabla{f})+\langle\nabla^{2}f,\nabla^{2}f\circ{T}\rangle-\langle\nabla^{2}f,\nabla_{\nabla{f}}T\rangle,
\end{equation}
where $R_{\eta,T}=R_{T}-\nabla(div_{n}T)^{\sharp}$ with $R_{T}(X,Y):=\mathrm{tr}(T\circ(Z\mapsto{R}(X,Z)Y))$ and $R(X,Z)Y$ is the Riemannian curvature tensor $\langle\cdot,\cdot\rangle$.
	
The Bochner type formula~\eqref{Bochner} is the most general case of traditional Bochner formula with relates harmonic functions $f$ and the Ricci curvature. We can see particular cases of Equation~\eqref{Bochner} in the works of Bochner~\cite{B} (for Laplacian case), Setti~\cite[Proposition~2.1]{S} (for drifted Laplacian case) and Alencar et. al~\cite[Theorem~1.1]{AGD} or \cite[Equation~(2.1)]{GM} (for Cheng-Yau operator case).

Moreover, for drifted Cheng-Yau operator $\square_{\eta}$ from \eqref{Bochner} we have
\begin{equation}\label{Bo}
	\frac{1}{2}\square_{\eta}(|\nabla{f}|^2) =\langle\nabla(\square_{\eta}{f}),\nabla{f}\rangle+R_{\eta,T}(\nabla{f},\nabla{f})+\langle\nabla^{2}f,\nabla^{2}f\circ{T}\rangle-\langle\nabla^{2}f,\nabla_{\nabla{f}}T\rangle.
\end{equation}

Since $(M^n,\langle\cdot,\cdot\rangle)$ is an $n$-dimensional complete Riemannian manifold and $\Omega$ is a bounded domain with smooth boundary $\partial\Omega$, let us consider on $M$ the weighted measure given by $\mathrm{dm}=e^{-\eta}dvol\Omega$, for some smooth function $\eta$. 
It is known that the divergence theorem is valid for operator $\mathrm{div}_{\eta}$ with respect the weighted measure $\mathrm{dm}$, that is, we can get for operator $\mathcal{L}$ defined by \eqref{1.2}
\begin{equation}
    \nonumber
	\int_{\Omega}\mathcal{L}f \mathrm{dm} = \int_{\partial\Omega} \langle\nu,T(\nabla f)\rangle d\tau
\end{equation}
and
\begin{equation}\label{partes}
	\int_{\Omega}h\mathcal{L}f \mathrm{dm} = - \int_{\Omega}\left\langle T(\nabla h), \nabla f\right\rangle \mathrm{dm} + \int_{\partial\Omega} h\left\langle\nu, T(\nabla f)\right\rangle d\tau,
\end{equation} 
where $\nu$ is the outward unit normal vector field of $\partial \Omega$ and $d\tau=e^{-\eta}d\partial\Omega$ is the weighted volume form on the boundary.

From \eqref{partes} we can see that $\mathcal{L}$ is a symmetric operator on the functions space $C_{c}^{\infty}(\Omega)$, and Problem~\eqref{problem1} has a real and discrete spectrum as in \eqref{spectrum}. The eigenfunctions $u_{j}$ associated with the eigenvalues $\lambda_{j}$ form an orthonormal base on $L^{2}(\Omega,\mathrm{dm})$, then for $f\in L^{2}(\Omega,\mathrm{dm})$ we have
\begin{equation}\label{2.9}
	f = \sum_{j=1}^{\infty} \langle f,u_j\rangle u_j
\end{equation}
and
\begin{equation}\label{2.10}
	\|f\|^2 = \sum_{j=1}^{\infty} \langle f,u_j\rangle^2.
\end{equation}
The formulas \eqref{2.9} and \eqref{2.10} are called the Parseval identities. Finally, the eigenvalues of Problem~\eqref{problem1} can be obtained by the following expression
\begin{equation}
	\nonumber
    \lambda_{j} = -\int_{\Omega}u_{j}\mathcal{L}{u_{j}}\ \mathrm{dm} = \int_{\Omega}T(\nabla{u_{j}},\nabla{u_{j}})\ \mathrm{dm}.
\end{equation}

To demonstrate one of the auxiliary lemmas, the following procedure will be necessary. Given a real Riemannian manifold $(M,\langle\cdot,\cdot\rangle)$ we can give a complex structure to the tangent space for all $p\in M$ by 
\begin{equation}
    \nonumber
    T_p^{\mathbb{C}}M := \left\{X_p + iY_p ; \ X_p,Y_p\in T_pM \right\},
\end{equation}
and we provide $T_p^{\mathbb{C}}M$ with a Hermitian inner product defined through $\langle\cdot,\cdot\rangle_p$ making the following extension for it 
\begin{equation}
    \nonumber
	\langle iX,Y \rangle_p  = \langle X,iY \rangle_p = i\langle X,Y \rangle_p, \mbox{ where } \  {i}=\sqrt{-1} ,
\end{equation}
and defining
\begin{equation}
\nonumber
\begin{split}
    \langle\!\langle\cdot,\cdot\rangle\!\rangle_p :\ &T_p^{\mathbb{C}}M\times T_p^{\mathbb{C}}M \longrightarrow \quad \mathbb{C} \\
	&\quad \ (Z,W) \quad \ \longmapsto  \langle\!\langle Z,W \rangle\!\rangle_p := \langle Z, \overline{W} \rangle_p,
\end{split} 
\end{equation}
where $\overline{W}_p=X_p-iY_p$ is the conjugate vector of $W_p=X_p+iY_p$.
In a similar way, the space of differentiable vector fields will be given by
\begin{equation}
    \nonumber
	\mathfrak{X}(M)^{\mathbb{C}} = \{X + iY; \ X,Y \in \mathfrak{X}(M)\}.
\end{equation} 

For a smooth complex function $f:M^n\to \mathbb{C}$, that is, $f = f_1 + if_2$ with $f_1,f_2 \in C^{\infty}(M)$, the gradient of $f$ is the vector field $\nabla{f} \in \mathfrak{X}(M)^{\mathbb{C}}$ defined on $M$ by
\begin{equation}
	\nonumber
    \langle\!\langle\nabla{f}, \overline{Z}\rangle\!\rangle_p= Z(f) = df(Z),
\end{equation}
for all $Z\in \mathfrak{X}(M)^{\mathbb{C}}$ and we have
\begin{equation}
    \nonumber
	\nabla{f} = \nabla{f_{1}} + i\nabla{f_{2}}.
\end{equation}
Moreover, for $Z=Z_1+iZ_2$ the following identities are valid
\begin{equation}
\nonumber
\begin{split}
    \mathrm{div} Z &= \mathrm{div}Z_1 + i\, \mathrm{div}Z_2,\\
	\mathrm{div}(fZ) &= f \mathrm{div}Z + \langle\!\langle\nabla{f}, \overline{Z}\rangle\!\rangle_p
\end{split}
\end{equation}
and
\begin{equation} 
    \nonumber
	\Delta{f} = \mathrm{div}(\nabla{f}) = \Delta{f_1} + i\Delta{f_2}.
\end{equation}
		
Analogously, for a function $\eta \in C^{\infty}(M)$ we get the $\eta $-divergence as follows  
\begin{equation}
    \nonumber
	\mathrm{div}_{\eta}(T(fZ)) = f\, \mathrm{div}_{\eta}(T(Z)) + \langle\!\langle\nabla{f}, \overline{T(Z)}\rangle\!\rangle_p,
\end{equation}
where the tensor  $T$ satisfies $T(X + iY) = T(X) + iT(Y)$.
Thus, it is natural to extend the operator $\mathcal{L}$ for the complex case as follows
\begin{equation}\label{2.11}
	\mathcal{L}(f) = \mathrm{div}_{\eta}(T(\nabla{f})) = \mathrm{div}(T(\nabla{f})) - \langle\!\langle T(\nabla{f}), \nabla{\eta} \rangle\!\rangle_p.
\end{equation}
Therefore, for smooth complex functions $g,h$ we have
\begin{equation}
    \nonumber
	\mathcal{L}{h} = \mathcal{L}{h_1} + i\mathcal{L}{h_2}
\end{equation}
	and
\begin{equation}
    \nonumber
	\mathcal{L}{(hg)} = h\mathcal{L}{g} + g\mathcal{L}{h} + 2\langle\!\langle T(\nabla h), \overline{\nabla g} \rangle\!\rangle_p.
\end{equation}
Then, for the complex case, the divergence theorem takes the following form
\begin{equation}
\nonumber
\begin{split}
    \int_{\Omega}h\mathcal{L}g \mathrm{dm} =  -\int_{\Omega}\langle\!\langle T(\nabla{h}),\overline{\nabla{g}}\rangle\!\rangle_p\mathrm{dm} + \int_{\partial\Omega} h\langle\!\langle\nu,T(\overline{\nabla{g}})\rangle\!\rangle_p\mathrm{d\tau}.
\end{split}
\end{equation}

To finish this section we will highlight an important result of a Cartan-Hadmard manifold. 

In what follows, we consider $\Omega$ be a bounded domain of $M$ and let us denote by $r:\overline{\Omega} \rightarrow \mathbb{R}, r(x)=d(x,o)$ the distance function from some fixed point $o\in M\!\setminus\!\overline{\Omega}$, that is differentiable, with $|\nabla r|=1$ and $\nabla r=\partial_{r}$.

Regarding Cartan-Hadamard manifolds, we would like to recall the following result.
	
\begin{lem}[\textbf{Rauch Comparison, \cite{PP}, p. 255}]\label{RC}
	Assume that $(M^n,\langle\cdot,\cdot\rangle)$ satisfies $c\leq{K}\leq{C}$. If $\langle\cdot,\cdot\rangle={dr}^2+g_r$ represents the metric in the polar coordinates, then
    \begin{equation}
        \nonumber
		\frac{sn'_{C}(r)}{sn_{C}(r)}g_r\leq\nabla^{2}r\leq\frac{sn'_{c}(r)}{sn_{c}(r)}g_r,
	\end{equation}
    where $sn_{k}$ denotes the unique solution to \"x$(r)+k\cdot{x(r)}=0$ with x$(0)=0$ and \.x$(0)=1$.
\end{lem}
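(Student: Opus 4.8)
The plan is to prove the two-sided Hessian estimate by the Riccati comparison technique for the shape operator of geodesic spheres, following the approach in \cite{PP}. First I would note that since $|\nabla r|=1$ the radial direction lies in the kernel, $\nabla^{2}r(\partial_{r},\cdot)=0$, so it suffices to control the restriction of $\nabla^{2}r$ to $\partial_{r}^{\perp}$, where it coincides with the $g_{r}$-self-adjoint shape operator $S_{r}:=\nabla^{2}r|_{\partial_{r}^{\perp}}$ of the level sets $\{r=\mathrm{const}\}$; thus the claimed matrix inequality is equivalent to $s_{C}(r)\,I\le S_{r}\le s_{c}(r)\,I$ on $\partial_{r}^{\perp}$, where I write $s_{k}(r):=\dfrac{sn_{k}'(r)}{sn_{k}(r)}$. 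Working along a unit-speed radial geodesic $\gamma$ (on the region where $r$ is smooth, i.e.\ inside the cut locus of $o$; in the Cartan--Hadamard setting of the paper this is all of $\overline{\Omega}$), I would differentiate $S_{r}$ in the radial direction using $\nabla_{\partial_{r}}\partial_{r}=0$ and the definition of the curvature tensor to obtain the matrix Riccati equation
\[
S_{r}'+S_{r}^{2}+R_{r}=0,
\]
where $R_{r}(\cdot)=R(\cdot,\partial_{r})\partial_{r}$ is the self-adjoint tidal operator on $\partial_{r}^{\perp}$, whose eigenvalues are the sectional curvatures $K(\partial_{r},e)$ of the planes containing $\partial_{r}$. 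The curvature hypothesis $c\le K\le C$ then reads as the operator bound $c\,I\le R_{r}\le C\,I$, while the constant-curvature model of curvature $k$ has shape operator $s_{k}(r)\,I$, with $s_{k}$ solving the scalar Riccati equation $s_{k}'+s_{k}^{2}+k=0$.

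For the upper bound I would reduce to a scalar inequality: for any parallel unit field $E$ along $\gamma$ with $E\perp\partial_{r}$, set $u(r)=\langle S_{r}E,E\rangle$; then $u'=-\langle S_{r}^{2}E,E\rangle-\langle R_{r}E,E\rangle$, and since $\langle S_{r}^{2}E,E\rangle=|S_{r}E|^{2}\ge\langle S_{r}E,E\rangle^{2}=u^{2}$ by Cauchy--Schwarz and $\langle R_{r}E,E\rangle\ge c$, one gets the Riccati inequality $u'+u^{2}+c\le0$. Comparing with the equality case $s_{c}'+s_{c}^{2}+c=0$ through the linear inequality $w'+(u+s_{c})w\le0$ for $w=u-s_{c}$ and an integrating factor yields $u\le s_{c}$; as $E$ was arbitrary this gives $S_{r}\le s_{c}(r)\,I$, i.e.\ $\nabla^{2}r\le\frac{sn_{c}'(r)}{sn_{c}(r)}g_{r}$. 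The lower bound cannot be obtained from the same Cauchy--Schwarz step (the inequality points the wrong way), so I would instead track the smallest eigenvalue $\mu(r)$ of $S_{r}$: choosing at a fixed $r_{0}$ a unit eigenvector $E$ realizing $\mu(r_{0})$ and extending it by parallel transport, one has $|S_{r_{0}}E|^{2}=\mu(r_{0})^{2}$, so $\mu'+\mu^{2}+C\ge0$ holds in the barrier (support-function) sense using $\langle R_{r}E,E\rangle\le C$; Sturm comparison against $s_{C}$ then forces $\mu\ge s_{C}$, giving $S_{r}\ge s_{C}(r)\,I$ and hence the lower estimate.

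The main obstacle is the singular initial condition at $r=0$: both the genuine shape operator and the model solution blow up like $r^{-1}$, so the comparison cannot be launched from a finite value. I would resolve this by recording the asymptotics $S_{r}=\frac{1}{r}I+O(r)$ (small geodesic spheres are nearly Euclidean) and $s_{k}(r)=\frac{1}{r}+O(r)$ as $r\to0^{+}$, so that $S_{r}-s_{k}(r)I\to0$; running each scalar Riccati comparison on an interval $[\epsilon,r]$ with $w(\epsilon)\to0$ as $\epsilon\to0^{+}$ and then letting $\epsilon\to0$ delivers the desired sign of $w$ throughout. A secondary technical point is the passage from these scalar (per-direction) inequalities back to the operator inequalities between symmetric endomorphisms, which is immediate for the upper bound (it holds for every unit $E\perp\partial_{r}$) and follows for the lower bound from the barrier argument for the minimal eigenvalue.
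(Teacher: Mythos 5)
Your proposal is correct, and it follows essentially the same route as the source the paper relies on: the paper itself offers no proof of this lemma, citing it directly from Petersen's \emph{Riemannian Geometry} (p.~255), where the result is established exactly by the radial Riccati equation $S_r'+S_r^2+R_r=0$ for the shape operator of geodesic spheres, scalar Riccati comparison for the upper bound, a support-function (barrier) argument for the smallest eigenvalue for the lower bound, and the launch of the comparison from the common asymptotics $S_r=\tfrac{1}{r}I+O(r)$ at $r\to 0^+$. Your handling of the two asymmetric directions (Cauchy--Schwarz for the upper bound, minimal-eigenvalue barrier for the lower) and of the singular initial condition matches that standard treatment, so there is nothing to correct.
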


In particular, $\frac{sn'_{k}(r)}{sn_{k}(r)}=\sqrt{-k}\frac{\cosh(\sqrt{-k}r)}{\sinh(\sqrt{-k}r)}$ when $k<0$, $\frac{sn'_{k}(r)}{sn_{k}(r)}=\frac{1}{r}$ when $k=0$, and $\frac{sn'_{k}(r)}{sn_{k}(r)}=\sqrt{k}\frac{\cos(\sqrt{k}r)}{\sin(\sqrt{k}r)}$ when $k>0$.
    
\section{Auxiliary Results}
	
We begin this section with first auxiliary Lemma due to Chen, Zheng and Yang, in \cite{Art}.
		
\begin{lem}[Chen, Zheng and Yang, see \cite{Art}, p. 298]\label{L3.1}
	Assume that ${\{\mu_j\}}_{j=1}^{\infty}$ is a nondecreasing sequence, that is,
	\begin{equation}
        \nonumber
        0<\mu_1\leq\mu_2\leq\cdots\leq \mu_k\leq\cdots \to + \infty,
	\end{equation}
	where each $\mu_j$ has finite multiplicity $m_j$ and is repeated according to its multiplicity. 
	Let $r={(r_j)}_{j=1}^{\infty}\in \ell_2$ such that $r_{m_1}\neq 0$ and $\displaystyle \sum_{j=1}^{\infty}\mu_jr_j^{2} < \sqrt{AB}$ with
	\begin{equation}
        \nonumber
		B =\displaystyle \sum_{j=1}^{\infty} r_j^{2} \quad \mbox{and} \quad A = \displaystyle \sum_{j=1}^{\infty} \mu_j^{2}r_j^{2}<+\infty\ ,
	\end{equation} 
	then
	\begin{equation}
        \nonumber
		\sum_{j=1}^{\infty}\mu_jr_j^{2} \leq 
		\dfrac{A + \mu_{m_1}\mu_{{m_1}+1}B}{\mu_{m_1} + \mu_{{m_1}+1}}. 
	\end{equation}
\end{lem}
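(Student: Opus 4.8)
The plan is to deduce the bound from the nonnegativity of a single auxiliary series built from a cleverly chosen quadratic. Set $S:=\sum_{j=1}^{\infty}\mu_j r_j^2$. First I would check that every series in sight converges, so that termwise regrouping is legitimate: $B<+\infty$ because $r\in\ell_2$, $A<+\infty$ by hypothesis, and $S\le\sqrt{AB}<+\infty$ by the Cauchy--Schwarz inequality applied to the factorization $\mu_j r_j\cdot r_j$ (all terms are nonnegative since $\mu_j>0$). With convergence secured, the whole argument reduces to an elementary rearrangement of absolutely convergent sums.

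The key device is the quadratic $(\mu_j-\mu_{m_1})(\mu_j-\mu_{m_1+1})$, whose two roots are exactly the two eigenvalues appearing in the denominator of the claimed bound. I would form
\[
\Sigma:=\sum_{j=1}^{\infty}(\mu_j-\mu_{m_1})(\mu_j-\mu_{m_1+1})\,r_j^2
\]
and argue that $\Sigma\ge0$ termwise, using the multiplicity structure. Since $m_1$ is the multiplicity of $\mu_1$, one has $\mu_1=\cdots=\mu_{m_1}<\mu_{m_1+1}$. Hence for $1\le j\le m_1$ the factor $\mu_j-\mu_{m_1}$ vanishes, while for $j\ge m_1+1$ one has $\mu_j\ge\mu_{m_1+1}>\mu_{m_1}$, so both factors are nonnegative; in either case the $j$-th summand is $\ge0$, whence $\Sigma\ge0$.

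Finally I would expand $\Sigma$ using the definitions of $A$, $B$ and $S$, obtaining
\[
\Sigma=A-(\mu_{m_1}+\mu_{m_1+1})\,S+\mu_{m_1}\mu_{m_1+1}\,B\ \ge\ 0,
\]
and then, dividing by $\mu_{m_1}+\mu_{m_1+1}>0$, rearrange to the asserted inequality $S\le\frac{A+\mu_{m_1}\mu_{m_1+1}B}{\mu_{m_1}+\mu_{m_1+1}}$. There is no deep obstacle here: the only genuinely delicate points are (i) justifying that the series may be split and recombined, which the convergence check above handles, and (ii) pinning down the correct sign of each factor from the ordering together with the meaning of $m_1$ as a multiplicity. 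The remaining hypotheses --- $r_{m_1}\ne0$ and the \emph{strict} inequality $S<\sqrt{AB}$ --- are not needed for the displayed estimate itself; they serve to exclude the degenerate case in which all the weight $r_j^2$ is concentrated at the first eigenvalue (there $S=\sqrt{AB}$ and the conclusion is an equality), and so are really there for the later application of the lemma.
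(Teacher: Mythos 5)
Your argument is correct and complete: the termwise nonnegativity of $(\mu_j-\mu_{m_1})(\mu_j-\mu_{m_1+1})r_j^2$ (zero for $j\le m_1$ because $\mu_j=\mu_{m_1}$, nonnegative for $j\ge m_1+1$ because both factors are $\ge 0$), together with the convergence of $A$, $B$ and $S$ that you verify up front, legitimizes the expansion $\Sigma=A-(\mu_{m_1}+\mu_{m_1+1})S+\mu_{m_1}\mu_{m_1+1}B\ge 0$ and hence the stated bound. The paper itself does not prove this lemma --- it only cites the dissertation \cite{CSS} and the original source \cite{Art} --- so there is nothing internal to compare against, but your quadratic-polynomial device is the standard and most natural route to this inequality, and your closing observation that the hypotheses $r_{m_1}\neq 0$ and $S<\sqrt{AB}$ are not needed for the displayed estimate (they only rule out the degenerate equality case relevant to the later application in Lemma~\ref{L3.2}) is accurate.
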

A detailed proof of the Lemma~\ref{L3.1} can be found in \cite{CSS}.
			
For the next result, note that if $g$ is a complex smooth function in $M^n$ we have
\begin{equation}
    \nonumber
	\mathcal{L}(|g|^{2}) = 2g_1\mathcal{L}{g_1} + 2g_2\mathcal{L}{g_2} + 2\langle\!\langle T(\nabla g), \nabla g\rangle\!\rangle_p,
\end{equation}
then we can conclude $\langle\!\langle T(\nabla g),\nabla g\rangle\!\rangle_p$ is a real number for all $p\in M^n$.
	
\begin{lem}\label{L3.2}
	For the Dirichlet eigenvalue problem \eqref{problem1}, let ${\{u_k\}}_{k=1}^{\infty}$ be the orthonormal
    eigenfunction corresponding to the $k$-th eigenvalue $\lambda_{k}$. Then for any complex-valued function $g\in C^3(\Omega)\cap C^{2}(\overline{\Omega})$ such that $gu_j$ is not the $\mathbb{C}$-linear combination of $u_1, u_2, \cdots, u_{k+1}$ and such that
	\begin{equation}
        \nonumber
		\int_{\Omega}gu_{j}u_{k+1} \mathrm{dm} \neq 0,
	\end{equation}
	for $\lambda_{j}<\lambda_{k+1}<\lambda_{k+2}$, $\quad k,j\in \mathbb{Z}^+$, $\quad j\geq 1$, we have
	\begin{equation}\label{3.1}
    \begin{split}
		\big[(\lambda_{k+1} &- \lambda_{j}) + (\lambda_{k+2} - \lambda_{j})\big]\int_{\Omega}\langle\!\langle \nabla{g},T(\nabla{g})\rangle\!\rangle_p u_{j}^{2} \mathrm{dm}\\
		&\leq \int_{\Omega}
		\Big|2\langle\!\langle T(\nabla{u_{j}}), \nabla{\overline{g}}\rangle\!\rangle_p + u_{j}\mathcal{L}{g}\Big|^{2} \mathrm{dm} + (\lambda_{k+2} - \lambda_{j})(\lambda_{k+1} - \lambda_{j})\int_{\Omega}|gu_{j}|^{2} \mathrm{dm}.
    \end{split}
	\end{equation}
\end{lem}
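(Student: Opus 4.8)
The plan is to convert every integral in \eqref{3.1} into a sum over the spectrum. Expand $gu_{j}$ in the orthonormal eigenbasis $\{u_{i}\}$ via the Parseval identities \eqref{2.9}--\eqref{2.10}: set $c_{i}=\int_{\Omega}gu_{j}\,\overline{u_{i}}\,\mathrm{dm}$, so that $gu_{j}=\sum_{i}c_{i}u_{i}$ and $\int_{\Omega}|gu_{j}|^{2}\,\mathrm{dm}=\sum_{i}|c_{i}|^{2}$. The first reduction I would record is the commutator identity: applying the complex product rule $\mathcal{L}(hg)=h\mathcal{L}g+g\mathcal{L}h+2\langle\!\langle T(\nabla h),\overline{\nabla g}\rangle\!\rangle_{p}$ with $h=g$ and the \emph{real} factor $u_{j}$, together with $\mathcal{L}u_{j}=-\lambda_{j}u_{j}$ and the symmetry of $T$, yields
\[
2\langle\!\langle T(\nabla u_{j}),\nabla\overline{g}\rangle\!\rangle_{p}+u_{j}\mathcal{L}g=\mathcal{L}(gu_{j})+\lambda_{j}gu_{j}=(\mathcal{L}+\lambda_{j})(gu_{j}).
\]
Since $(\mathcal{L}+\lambda_{j})u_{i}=(\lambda_{j}-\lambda_{i})u_{i}$, the left-hand side equals $\sum_{i}c_{i}(\lambda_{j}-\lambda_{i})u_{i}$, whence by orthonormality $\int_{\Omega}\big|2\langle\!\langle T(\nabla u_{j}),\nabla\overline{g}\rangle\!\rangle_{p}+u_{j}\mathcal{L}g\big|^{2}\,\mathrm{dm}=\sum_{i}(\lambda_{i}-\lambda_{j})^{2}|c_{i}|^{2}$, which handles the first term on the right of \eqref{3.1}.

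Next I would establish the energy identity for the left-hand side of \eqref{3.1}, namely $\int_{\Omega}\langle\!\langle\nabla g,T(\nabla g)\rangle\!\rangle_{p}u_{j}^{2}\,\mathrm{dm}=\sum_{i}(\lambda_{i}-\lambda_{j})|c_{i}|^{2}$. To obtain it I would apply the $\eta$-divergence theorem of Section~\ref{preliminaries} to the complex vector field $V=u_{j}^{2}\,\overline{g}\,T(\nabla g)$; because $u_{j}$ vanishes on $\partial\Omega$ the boundary term drops out, and the product rule for $\mathrm{div}_{\eta}(T(fZ))$ expands $\mathrm{div}_{\eta}V$ precisely as $\overline{gu_{j}}\,(\mathcal{L}+\lambda_{j})(gu_{j})+u_{j}^{2}\langle\!\langle\nabla g,T(\nabla g)\rangle\!\rangle_{p}$. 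Integrating and using that $\int_{\Omega}\overline{gu_{j}}\,(\mathcal{L}+\lambda_{j})(gu_{j})\,\mathrm{dm}=\sum_{i}(\lambda_{j}-\lambda_{i})|c_{i}|^{2}$ is real gives the stated spectral form of the energy.

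With $L=\sum_{i}(\lambda_{i}-\lambda_{j})|c_{i}|^{2}$, $\;R_{1}=\sum_{i}(\lambda_{i}-\lambda_{j})^{2}|c_{i}|^{2}$ and $R_{2}=\sum_{i}|c_{i}|^{2}$, the claimed inequality \eqref{3.1} becomes $[(\lambda_{k+1}-\lambda_{j})+(\lambda_{k+2}-\lambda_{j})]\,L\le R_{1}+(\lambda_{k+2}-\lambda_{j})(\lambda_{k+1}-\lambda_{j})\,R_{2}$. Collecting terms, this is equivalent to
\[
\sum_{i}(\lambda_{i}-\lambda_{k+1})(\lambda_{i}-\lambda_{k+2})\,|c_{i}|^{2}\ge 0,
\]
because the coefficient of $|c_{i}|^{2}$, viewed as a quadratic in $x=\lambda_{i}-\lambda_{j}$, is $x^{2}-[(\lambda_{k+1}-\lambda_{j})+(\lambda_{k+2}-\lambda_{j})]x+(\lambda_{k+1}-\lambda_{j})(\lambda_{k+2}-\lambda_{j})=(\lambda_{i}-\lambda_{k+1})(\lambda_{i}-\lambda_{k+2})$. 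The decisive point is then purely order-theoretic: since the eigenvalues are listed non-decreasingly and $\lambda_{k+1}<\lambda_{k+2}$ are consecutive, no $\lambda_{i}$ lies in the open interval $(\lambda_{k+1},\lambda_{k+2})$; thus for every $i$ the two factors are both $\le 0$ (when $\lambda_{i}\le\lambda_{k+1}$) or both $\ge 0$ (when $\lambda_{i}\ge\lambda_{k+2}$), so each summand is nonnegative and the inequality holds.

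The main obstacle is the energy identity of the second paragraph: I must justify the integration by parts carefully in the complex $(\eta,T)$-weighted setting, confirm the vanishing of the boundary term from $u_{j}|_{\partial\Omega}=0$, and verify that $R_{1}=\sum_{i}(\lambda_{i}-\lambda_{j})^{2}|c_{i}|^{2}<\infty$ so that all the rearrangements into spectral sums are legitimate; the latter follows since $g\in C^{3}(\Omega)\cap C^{2}(\overline{\Omega})$ and $u_{j}$ is smooth and vanishes on $\partial\Omega$, placing $gu_{j}$ in the domain of $\mathcal{L}$. I would also note that the two standing hypotheses, $\int_{\Omega}gu_{j}u_{k+1}\,\mathrm{dm}\neq 0$ (i.e.\ $c_{k+1}\neq 0$) and $gu_{j}\notin\mathrm{span}_{\mathbb{C}}\{u_{1},\dots,u_{k+1}\}$, are not needed for the inequality itself (which holds term by term) but ensure that both sides are nonzero, so that \eqref{3.1} becomes a genuine, non-vacuous constraint when it is subsequently inserted into the gap estimates of Theorems~\ref{T1.1}--\ref{T1.3}.
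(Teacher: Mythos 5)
Your proposal is correct, and the first half coincides with the paper's: you derive exactly the identities \eqref{3.3}, \eqref{3.5} and \eqref{3.6}, namely $\int_\Omega|gu_j|^2\,\mathrm{dm}=\sum_i|c_i|^2$, $\int_\Omega\langle\!\langle\nabla g,T(\nabla g)\rangle\!\rangle_p u_j^2\,\mathrm{dm}=\sum_i(\lambda_i-\lambda_j)|c_i|^2$ and $\int_\Omega|2\langle\!\langle T(\nabla u_j),\nabla\overline{g}\rangle\!\rangle_p+u_j\mathcal{L}g|^2\,\mathrm{dm}=\sum_i(\lambda_i-\lambda_j)^2|c_i|^2$, with your $c_i$ equal to the paper's $a_{ji}$ and your operator identity $(\mathcal{L}+\lambda_j)(gu_j)=u_j\mathcal{L}g+2\langle\!\langle T(\nabla u_j),\nabla\overline{g}\rangle\!\rangle_p$ being the paper's relation $(\lambda_s-\lambda_j)a_{js}=-b_{js}$ read off before projecting. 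Where you genuinely diverge is the endgame: the paper splits the sums into a head ($s\le k$) and a tail ($s\ge k+1$), applies Cauchy--Schwarz to the tail, invokes Lemma~\ref{L3.1} (which is where the hypotheses $a_{j,k+1}\neq0$ and $gu_j\notin\mathrm{span}_{\mathbb{C}}\{u_1,\dots,u_{k+1}\}$ enter, to guarantee the strict inequality $C(j)<\sqrt{A(j)B(j)}$), and finally discards the nonpositive head contribution $-\sum_{s=1}^k(\lambda_{k+2}-\lambda_s)(\lambda_{k+1}-\lambda_s)|a_{js}|^2$. You instead observe that the asserted inequality is \emph{identically} $\sum_i(\lambda_i-\lambda_{k+1})(\lambda_i-\lambda_{k+2})|c_i|^2\ge0$, which holds term by term because the ordered spectrum has no member strictly between $\lambda_{k+1}$ and $\lambda_{k+2}$; this is a correct completion of the square, and it is in fact the mechanism hidden inside Lemma~\ref{L3.1} as well. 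Your route is shorter, dispenses with Lemma~\ref{L3.1} and the Cauchy--Schwarz step, and correctly exposes that the two standing hypotheses on $g$ are not needed for \eqref{3.1} itself (they matter later, when the corollaries divide by quantities that must be nonzero); the paper's route buys only fidelity to the Chen--Zheng--Yang template. The only points you should still write out are the ones you already flag: the termwise action $\langle\mathcal{L}(gu_j),u_i\rangle=-\lambda_i c_i$ uses the symmetry of $\mathcal{L}$ together with $u_j|_{\partial\Omega}=0$ to kill boundary terms, and the rearrangement of the three convergent series into a single sum is licensed by $\sum_i(1+(\lambda_i-\lambda_j)^2)|c_i|^2<\infty$.
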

\begin{proof}
	We will use the Lemma~\ref{L3.1}. Start by defining
	\begin{equation}
    \nonumber
		a_{js} = \int_{\Omega}gu_{j}u_{s} \mathrm{dm} \quad \mbox{and} \quad		b_{js} = \int_{\Omega} \left(u_{j}\mathcal{L}{g} + 2\langle\!\langle T(\nabla{u_{j}}), \nabla{\overline{g}}\rangle\!\rangle_p \right)\!u_{s} \mathrm{dm}.
	\end{equation}
	Note that $a_{js} = a_{sj}$, and using integration by parts formula and \eqref{2.11} we obtain
    \begin{equation}
    \nonumber
        \begin{split}
		\lambda_{s}a_{js} &= - \int_{\Omega}gu_{j}(\mathcal{L}{u_{s}})\mathrm{dm}
		= -\int_{\Omega}\left(\mathcal{L}{gu_{j}}\right)u_{s} \mathrm{dm} \\ &= -\int_{\Omega}\Big(g\mathcal{L}{u_{j}} + u_{j}\mathcal{L}{g} + 2\langle\!\langle T(\nabla{u_{j}}), \nabla{\overline{g}}\rangle\!\rangle_p \Big)u_{s}\mathrm{dm} \\
		&= -\int_{\Omega}\Big(-	g\lambda_{j} u_{j} + u_{j}\mathcal{L}{g}  + 2\langle\!\langle T(\nabla{u_{j}}), \nabla{\overline{g}}\rangle\!\rangle_p \Big) u_{s}\mathrm{dm} \\
		&= \lambda_{j}\int_{\Omega} 
		gu_{j}u_{s}\mathrm{dm} -\int_{\Omega}\Big(u_{j}\mathcal{L}{g} + 2\langle\!\langle T(\nabla{u_{j}}), \nabla{\overline{g}}\rangle\!\rangle_p \Big) u_{s}\mathrm{dm} \\
		&= \lambda_{j}a_{js} - b_{js},
        \end{split}
	\end{equation}
	that is,
	\begin{equation}\label{3.2}
		\lambda_{s}a_{js} - \lambda_{j}a_{js} = 
		(\lambda_{s} - \lambda_{j})a_{js} = -b_{js}.
	\end{equation}
	Since $\{u_k\}_{k=1}^{\infty}$ is complete and orthonormal basis on $L_2(\Omega,\mathrm{dm})$, using the inner product of $L_2(\Omega,\mathrm{dm})$, Parseval identity and definition of the $a_{js}$, we have
	\begin{equation}\label{3.3}
    \begin{split}
		\int_{\Omega}|gu_{j}|^{2} \mathrm{dm} &= \big{|}gu_{j}\big{|}^{2}_{L_2(\Omega,\mathrm{dm})} = \sum_{s=1}^{\infty}\big|\langle gu_{j}, u_{s} \rangle\big|^{2} \\
		&= \sum_{s=1}^{\infty}\left| \int_{\Omega}gu_{j}\overline{u_{s}} \mathrm{dm} \right|^{2}  = \sum_{s=1}^{\infty}\left| \int_{\Omega}gu_{j} u_{s} \mathrm{dm} \right|^{2}  = \sum_{s=1}^{\infty}| a_{js}|^{2}.
    \end{split}
	\end{equation}
	Whereas $u_{j}$ is a real function for all $j$ and using Stokes' theorem, we get
	\begin{equation}\label{3.4}
	\begin{split}
		\int_{\Omega}\langle\!\langle\nabla{g}, T(\nabla{g})\rangle\!\rangle_p u_{j}^{2} \mathrm{dm}
		&= \int_{\Omega}\langle\!\langle\nabla{g},T(\overline{u}_{j}^{2}\nabla{g})\rangle\!\rangle_p \mathrm{dm} = - \int_{\Omega}g \left(\mathrm{div}_{\eta}(T(u_{j}^{2}\nabla\overline{g}))\right) \mathrm{dm} \\
		&= - \int_{\Omega}g\Big(u_{j}^{2}\mathrm{div}_{\eta}(T(\nabla\overline{g})) + \langle\!\langle T(\nabla{u_{j}^{2}}), \overline{\nabla\overline{g}}\rangle\!\rangle_p\Big) \mathrm{dm} \\
		&= - \int_{\Omega}g\Big(u_{j}^{2} \mathcal{L}\overline{g} + 2u_{j}\langle\!\langle T(\nabla{u_{j}}), \nabla{g}\rangle\!\rangle_p\Big) \mathrm{dm} \\
		&= - \int_{\Omega} gu_{j}\left( u_{j}\mathcal{L}\overline{g} + 2\langle\!\langle T(\nabla{u_{j}}), \nabla{g}\rangle\!\rangle_p \right) \mathrm{dm}.
		\end{split}
	\end{equation}
	From the definitions of $a_{js}$ and $b_{js}$, and identities \eqref{3.2} and \eqref{3.4} we have
	\begin{equation}\label{3.5}
	\begin{split}
		\int_{\Omega}\langle\!\langle \nabla{g},T(\nabla{g})\rangle\!\rangle_p u_{j}^{2} \mathrm{dm}
		&= - \int_{\Omega} gu_{j}\left( u_{j}\mathcal{L}\overline{g} + 2\langle\!\langle T(\nabla{u_{j}}), \nabla{g}\rangle\!\rangle_p \right)\mathrm{dm} \\
		&= - \int_{\Omega}gu_{j}\left( \overline{u_{j}\mathcal{L}{g} + 2\langle\!\langle T(\nabla{u_{j}}), \nabla{\overline{g}}\rangle\!\rangle_p}
		\right) \mathrm{dm} \\
		&= - \left\langle gu_{j},u_{j}\mathcal{L}{g} + 2\langle\!\langle T(\nabla{u_{j}}), \nabla{\overline{g}}\rangle\!\rangle_p
		\right\rangle \\
		&= - \left\langle \sum_{s=1}^{\infty}\left\langle gu_{j}, u_{s} \right\rangle u_{s} ,\sum_{k=1}^{\infty} \left\langle u_{j}\mathcal{L}{g} + 2\langle\!\langle T(\nabla{u_{j}}), \nabla{\overline{g}}\rangle\!\rangle_p, u_k \right\rangle u_k \right\rangle \\
		&= - \sum_{s=1}^{\infty}\left\langle gu_{j}, u_{s}\right\rangle\overline{\sum_{k=1}^{\infty} \left\langle u_{j}\mathcal{L}{g} + 2\langle\!\langle T(\nabla{u_{j}}), \nabla{\overline{g}}\rangle\!\rangle_p, u_k\right\rangle}\left\langle u_{s} ,u_k\right\rangle \\
		&= - \sum_{s,k=1}^{\infty}\left\langle gu_{j}, u_{s}\right\rangle\overline{\left\langle u_{j}\mathcal{L}{g} + 2\langle\!\langle T(\nabla{u_{j}}), \nabla{\overline{g}}\rangle\!\rangle_p, u_k\right\rangle}\delta_{sk} \\
		&= - \sum_{s=1}^{\infty}\left\langle gu_{j}, u_{s}\right\rangle\overline{\left\langle u_{j}\mathcal{L}{g} + 2\langle\!\langle T(\nabla{u_{j}}), \nabla{\overline{g}}\rangle\!\rangle_p, u_{s}\right\rangle} \\
		&= - \sum_{s=1}^{\infty}\int_{\Omega}gu_{j}u_{s} \mathrm{dm} \overline{\int_{\Omega}\left( 
		u_{j}\mathcal{L}{g} + 2\langle\!\langle T(\nabla{u_{j}}), \nabla{\overline{g}}\rangle\!\rangle_p\right)\! u_{s} \mathrm{dm}} \\
		&= - \sum_{s=1}^{\infty}a_{js} \overline{b_{js}} 
		= \sum_{s=1}^{\infty} a_{js}\overline{(-b_{js})} 
		= \sum_{s=1}^{\infty} a_{js}\overline{(\lambda_{s} - \lambda_{j})a_{js}} \\
		&= \sum_{s=1}^{\infty}(\lambda_{s} - \lambda_{j}) a_{js}\overline{a_{js}}
		= \sum_{s=1}^{\infty}(\lambda_{s} - \lambda_{j}) |a_{js}|^{2}.
	\end{split}
	\end{equation}
	Again from Parseval identity and from equation~\eqref{3.2}, we get
	\begin{equation*}
	\begin{split}
		\int_{\Omega}\big|u_{j}\mathcal{L}{g} + 2\langle\!\langle T(\nabla{u_{j}}), \nabla{\overline{g}}\rangle\!\rangle_p\big|^{2} \mathrm{dm}
		&= \Big{|} u_{j}\mathcal{L}{g} + 2\langle\!\langle T(\nabla{u_{j}}), \nabla{\overline{g}}\rangle\!\rangle_p \Big{|}_{L_2(\Omega,\mathrm{dm})}^{2} \\
		&= \sum_{s=1}^{\infty}\left|\Big\langle u_{j}\mathcal{L}{g} + 2\langle\!\langle T(\nabla{u_{j}}), \nabla{\overline{g}}\rangle\!\rangle_p, u_{s}\Big\rangle \right|^{2} \\
		&= \sum_{s=1}^{\infty} \left|\int_{\Omega} \Big(u_{j}\mathcal{L}{g} + 2\langle\!\langle T(\nabla{u_{j}}), \nabla{\overline{g}}\rangle\!\rangle_p\Big) \overline{u_{s}}  \mathrm{dm} \right|^{2}, \\ 
	\end{split}
	\end{equation*}
that is,
 \begin{equation}\label{3.6}
	\begin{split}
		\int_{\Omega}\big|u_{j}\mathcal{L}{g} + 2\langle\!\langle T(\nabla{u_{j}}), \nabla{\overline{g}}\rangle\!\rangle_p\big|^{2} \mathrm{dm}
        &= \sum_{s=1}^{\infty} \left|\int_{\Omega} \left(u_{j}\mathcal{L}{g} + 2\langle\!\langle T(\nabla{u_{j}}), \nabla{\overline{g}}\rangle\!\rangle_p\right) u_{s} \mathrm{dm}\right|^{2}\\
		&= \sum_{s=1}^{\infty}\left| b_{js} \right|^{2} = \sum_{s=1}^{\infty} \left|(\lambda_{j} - \lambda_{s})a_{js}\right|^{2} \\
		&= \sum_{s=1}^{\infty} \big|(\lambda_{j} - \lambda_{s})\big|^{2}\big|a_{js}\big|^{2}
		= \sum_{s=1}^{\infty} (\lambda_{s} - \lambda_{j})^{2}\left|a_{js}\right|^{2}.
	\end{split}
	\end{equation}
	From equality~\eqref{3.5}, we deduce
	\begin{equation}
    \nonumber
    \begin{split}
		\Bigg(\int_{\Omega}\langle\!\langle &\nabla{g},T(\nabla{g})\rangle\!\rangle_p u_{j}^{2} \mathrm{dm} - \sum_{s=1}^{k}(\lambda_{s} - \lambda_{j})|a_{js}|^{2} \Bigg)^{2} \\
		&= \left(\sum_{s=1}^{\infty}
		(\lambda_{s} - \lambda_{j})|a_{js}|^{2} -\sum_{s=1}^{k}(\lambda_{s} - \lambda_{j})|a_{js}|^{2}\right)^{2} = \left(\sum_{s=k+1}^{\infty}
		(\lambda_{s} - \lambda_{j})|a_{js}|^{2}\right)^{2}.
    \end{split}
	\end{equation}
	Follow from identities \eqref{3.3}, \eqref{3.6} and Cauchy-Schwarz inequality, that 
	{\small\begin{equation}
    \nonumber
    \begin{split}
       &\Bigg(\sum_{s=k+1}^{\infty}
		(\lambda_{s} - \lambda_{j})
		|a_{js}|^{2}\Bigg)^{2} \\
		  &= \left(\sum_{s=k+1}^{\infty}
		\Big((\lambda_{s} - \lambda_{j})
		|a_{js}|\Big)|a_{js}|\right)^{2} \\
		&\leq \left(\sum_{s=k+1}^{\infty}
		\big(|a_{js}|\big)^{2}\right)  \negthickspace \left(\sum_{s=k+1}^{\infty}
		\Big((\lambda_{s} - \lambda_{j})
		|a_{js}|\Big)^{2}\right) \\
		& = \left(\sum_{s=1}^{\infty}
		|a_{js}|^{2} - \sum_{s=1}^{k}
		|a_{js}|^{2}\right) \negthickspace
		\left(\sum_{s=1}^{\infty}
		(\lambda_{s} - \lambda_{j})^{2}|a_{js}|^{2} - \sum_{j=1}^{k}(\lambda_{s} - \lambda_{j})^{2}
		|a_{js}|^{2} \right)\\
		&= \left(\int_{\Omega}|gu_{j}|^{2} \mathrm{dm} - \sum_{s=1}^{k}
		|a_{js}|^{2}\right) \negthickspace
		\left(\int_{\Omega}\Big|u_{j}\mathcal{L}{g} + 2\langle\!\langle T(\nabla{u_{j}}), \nabla{\overline{g}}\rangle\!\rangle_p \Big|^{2} \mathrm{dm} - \sum_{s=1}^{k}
		(\lambda_{s} - \lambda_{j})^{2}
		|a_{js}|^{2}\right),
    \end{split}
	\end{equation}}
	that is,
	{\small\begin{equation}\label{3.7}
	\begin{split}
		&\left(\int_{\Omega}\langle\!\langle \nabla{g},T(\nabla{g})\rangle\!\rangle_p u_{j}^{2}\mathrm{dm} - \sum_{s=1}^{k}(\lambda_{s} - \lambda_{j})|a_{js}|^{2}\right)^{2} \\ &\leq \left(\int_{\Omega}|gu_{j}|^{2} \mathrm{dm} - \sum_{s=1}^{k}
		|a_{js}|^{2}\right) \negthickspace
		\left(\int_{\Omega}\Big|u_{j}\mathcal{L}{g} + 2\langle\!\langle T(\nabla{u_{j}}), \nabla{\overline{g}}\rangle\!\rangle_p \Big|^{2} \mathrm{dm} - \sum_{s=1}^{k}
		(\lambda_{s} - \lambda_{j})^{2}
		|a_{js}|^{2}\right).
	\end{split}
	\end{equation}}
		
	Now, as $a_{j{k+1}}\displaystyle\int_{\Omega} gu_{j}u_{k+1}\neq 0$, define 		
	\begin{equation}
    \nonumber
    \begin{split}
		{A}(j) &= \int_{\Omega}\Big|u_{j}\mathcal{L}{g} + 2\langle\!\langle T(\nabla{u_{j}}), \nabla{\overline{g}}\rangle\!\rangle_p \Big|^{2} \mathrm{dm} - \sum_{s=1}^{k}
		(\lambda_{s} - \lambda_{j})^{2}|a_{js}|^{2}\\ 
		&= \sum_{s=k+1}^{\infty}
		(\lambda_{s} - \lambda_{j})^{2}|a_{js}|^{2} > 0, \\[8pt]
		{B}(j) &= \int_{\Omega}|gu_{j}|^{2} \mathrm{dm} - \sum_{s=1}^{k}|a_{js}|^{2} = \sum_{s=k+1}^{\infty}|a_{js}|^{2} > 0, \\[8pt]
		{C}(j) &= \int_{\Omega}
		\langle\!\langle \nabla{g},T(\nabla{g})\rangle\!\rangle_p u_{j}^{2} \mathrm{dm} - \sum_{s=1}^{k}(\lambda_{s} - \lambda_{j})
		|a_{js}|^{2} = \sum_{s=k+1}^{\infty}(\lambda_{s} - \lambda_{j})
		|a_{js}|^{2} > 0.
    \end{split}
	\end{equation}
	Notice that $gu_{j}$ is not the $\mathbb{C} $-linear combination of $u_1,\cdots,u_{k+1}$ then there are some $l>k+1$ such that
	\begin{equation}
        \nonumber
		a_{jl} = \int_{\Omega}gu_{j}u_l \neq 0.
	\end{equation}
	Moreover $\lambda_{j} <\lambda_{k+1} < \lambda_{k+2} \leq \lambda_l$, the vector ${(|a_{js}|)}_{s=k+1}^{\infty}$ is not proportional to ${\left((\lambda_{s}-\lambda_{j})^{2}|a_{js}|\right)}_{s=k+1}^{\infty}$, and from Cauchy-Schwarz inequality, we conclude that \eqref{3.7} is equivalent to
	\begin{equation}\label{3.8}
		{C}(j) < \sqrt{{A}(j){B}(j)}.
	\end{equation}
	Since $a_{j{k+1}}\neq0$ and from \eqref{3.8}, applying the Theorem~\ref{L3.1} we deduce
	\begin{equation}\label{3.9}
		{C}(j) \leq \dfrac{{A}(j) + (\lambda_{k+2} - \lambda_{j})(\lambda_{k+1} - \lambda_{j}){B}(j)}{(\lambda_{k+2} - \lambda_{j}) + (\lambda_{k+1} - \lambda_{j})}.
	\end{equation}
	From \eqref{3.9} and the definitions of ${A}(j),{B}(j)$ and ${C}(j)$, we obtain
	\begin{equation}
    \nonumber
    \begin{split}
		\big((\lambda_{k+2} - \lambda_{j}) +& (\lambda_{k+1} - \lambda_{j})\big)\left(\int_{\Omega}
		\langle\!\langle \nabla{g},T(\nabla{g})\rangle\!\rangle_p u_{j}^{2} \mathrm{dm} - \sum_{s=1}^{k}(\lambda_{s} - \lambda_{j})|a_{js}|^{2}\right) \\
		&\leq \int_{\Omega}\Big|u_{j}\mathcal{L}{g} + 2\langle\!\langle T(\nabla{u_{j}}), \nabla{\overline{g}}\rangle\!\rangle_p \Big|^{2}  \mathrm{dm} - \sum_{s=1}^{k}(\lambda_{s} - \lambda_{j})^{2}|a_{js}|^{2} \\
		&\quad + (\lambda_{k+2} - \lambda_{j})(\lambda_{k+1}  - \lambda_{j})\left(\int_{\Omega}|gu_{j}|^{2} \mathrm{dm} - \sum_{s=1}^{k}|a_{js}|^{2}\right),
    \end{split}
	\end{equation}
	which implies
    \begin{equation}
    \nonumber
    \begin{split}
		&\left((\lambda_{k+2} - \lambda_{j}) + (\lambda_{k+1} - \lambda_{j})\right)\int_{\Omega}
		\langle\!\langle \nabla{g},T(\nabla{g})\rangle\!\rangle_p u_{j}^{2} \mathrm{dm} \\
		&\quad \quad \leq \int_{\Omega}
		\Big|u_{j}\mathcal{L}{g} + 2\langle\!\langle T(\nabla{u_{j}}), \nabla{\overline{g}}\rangle\!\rangle_p\Big|^{2} \mathrm{dm} + (\lambda_{k+2} - \lambda_{j})(\lambda_{k+1} - \lambda_{j})\int_{\Omega}|gu_{j}|^{2} \mathrm{dm} \\
		&\quad \quad \quad + \left[\Big((\lambda_{k+2} - \lambda_{j}) + (\lambda_{k+1} - \lambda_{j})\Big)
		\sum_{s=1}^{k}(\lambda_{s} - \lambda_{j})|a_{js}|^{2} - \sum_{s=1}^{k}(\lambda_{s} - \lambda_{j})^{2}|a_{js}|^{2}\right] \\
		&\quad \quad \quad - (\lambda_{k+2} - \lambda_{j})(\lambda_{k+1} - \lambda_{j})\sum_{s=1}^{k}|a_{js}|^{2} \\[12pt]
		&\quad \quad = \int_{\Omega}
		\Big|u_{j}\mathcal{L}{g} + 2\langle\!\langle T(\nabla{u_{j}}), \nabla{\overline{g}}\rangle\!\rangle_p\Big|^{2}\! \mathrm{dm} + (\lambda_{k+2} - \lambda_{j})(\lambda_{k+1} - \lambda_{j})\int_{\Omega}|gu_{j}|^{2} \mathrm{dm} \\
		&\quad \quad \quad + \sum_{s=1}^{k}\Big[(\lambda_{k+2} - \lambda_{j})(\lambda_{s} - \lambda_{j}) + (\lambda_{k+1} - \lambda_{j})
		(\lambda_{s} - \lambda_{j}) \\
		&\quad \quad \quad - \quad (\lambda_{k+2} - \lambda_{j})(\lambda_{k+1} - \lambda_{j}) - (\lambda_{s} - \lambda_{j})^{2}\Big]|a_{js}|^{2} \\[12pt]
		&\quad \quad = \int_{\Omega}
		\Big|u_{j}\mathcal{L}{g} + 2\langle\!\langle T(\nabla{u_{j}}), \nabla{\overline{g}}\rangle\!\rangle_p\Big|^{2}\! \mathrm{dm} + (\lambda_{k+2} - \lambda_{j})(\lambda_{k+1} - \lambda_{j})\int_{\Omega}|gu_{j}|^{2} \mathrm{dm} \\
		&\quad \quad \quad - \sum_{s=1}^{k}\Big[(\lambda_{k+2} - \lambda_{s})(\lambda_{k+1} - \lambda_{s})\Big]|a_{js}|^{2},
    \end{split}
	\end{equation}
    so
	\begin{equation}
    \nonumber
    \begin{split}
		&\left((\lambda_{k+2} - \lambda_{j}) + (\lambda_{k+1} - \lambda_{j})\right)\int_{\Omega}
		\langle\!\langle \nabla{g},T(\nabla{g})\rangle\!\rangle_p u_{j}^{2} \mathrm{dm} \\
		&\quad \quad \leq \int_{\Omega}
		\Big|u_{j}\mathcal{L}{g} + 2\langle\!\langle T(\nabla{u_{j}}), \nabla{\overline{g}}\rangle\!\rangle_p\Big|^{2}\! \mathrm{dm} + (\lambda_{k+2} - \lambda_{j})(\lambda_{k+1} - \lambda_{j})\int_{\Omega}|gu_{j}|^{2} \mathrm{dm}.
    \end{split}
	\end{equation}
    this concludes the proof.
\end{proof}

Then, by choosing the appropriate function in the Lemma above we obtain the following result.
    
\begin{cor}\label{C3.1}
    Under the assumption of Lemma~\ref{L3.2}, for any nonconstant real-valued function $f\in C^3(\Omega)\cap C^{2}(\overline{\Omega})$, we have
    {\small\begin{equation}
	\nonumber
    \begin{split}
		\Big((&\lambda_{k+2} - \lambda_j) + (\lambda_{k+1} -\lambda_j)\Big)\!
		\int_{\Omega}\langle \nabla{f},T(\nabla{f})\rangle u_j^{2} \mathrm{dm} \\
		& \leq 2\sqrt{(\lambda_{k+2} \!- \!\lambda_j)(\lambda_{k+1} \!- \!\lambda_j)\!\int_{\Omega}\!\langle\nabla{f},T(\nabla{f})\rangle^{2} u_j^{2} \mathrm{dm} }
		+\!\int_{\Omega}\!\Big(2\langle T(\nabla{u_j}), \nabla{f} \rangle \!+\! u_j\mathcal{L}{f}\Big)^{2} \!\mathrm{dm}.
    \end{split}
	\end{equation}}
\end{cor}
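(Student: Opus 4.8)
The plan is to apply Lemma~\ref{L3.2} to the one-parameter family of complex trial functions $g = e^{\sqrt{-1}\,\alpha f}$, where $\alpha>0$ is a free real parameter that I will optimize at the very end. This choice is natural: since $|g|\equiv 1$ the mass term $\int_\Omega|gu_j|^2\,\mathrm{dm}$ stays under control, while $\nabla g = \sqrt{-1}\,\alpha\,g\,\nabla f$ reproduces $\nabla f$ up to a scalar factor, so that all three integrands in \eqref{3.1} become explicit expressions in $\alpha$, $f$ and $u_j$.

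First I would compute the three pieces of \eqref{3.1}. Because $T$ and $f$ are real and $\nabla g = \sqrt{-1}\,\alpha\,g\,\nabla f$, a direct calculation with the Hermitian structure gives $\langle\!\langle\nabla g,T(\nabla g)\rangle\!\rangle_p = \alpha^2\langle\nabla f,T(\nabla f)\rangle$ and $|gu_j|^2=u_j^2$, so $\int_\Omega|gu_j|^2\,\mathrm{dm}=1$ by orthonormality. For the middle term I would expand $\mathcal{L}g$ using \eqref{2.11} together with the product rule $\mathrm{div}_{\eta}(T(hZ))=h\,\mathrm{div}_{\eta}(T(Z))+\langle\!\langle\nabla h,\overline{T(Z)}\rangle\!\rangle_p$ applied with $h=g$, $Z=\nabla f$, which yields $\mathcal{L}g=\sqrt{-1}\,\alpha\,g\,\mathcal{L}f-\alpha^2 g\,\langle\nabla f,T(\nabla f)\rangle$. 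Combining this with $2\langle\!\langle T(\nabla u_j),\nabla\overline{g}\rangle\!\rangle_p=2\sqrt{-1}\,\alpha\,g\,\langle T(\nabla u_j),\nabla f\rangle$, the bracketed quantity factors as $g\big(\sqrt{-1}\,\alpha P-\alpha^2 Q\big)$, where $P:=2\langle T(\nabla u_j),\nabla f\rangle+u_j\mathcal{L}f$ and $Q:=u_j\langle\nabla f,T(\nabla f)\rangle$ are both real; since $|g|=1$, its modulus squared is simply $\alpha^2 P^2+\alpha^4 Q^2$.

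Substituting these into \eqref{3.1} and writing $L:=\big[(\lambda_{k+2}-\lambda_j)+(\lambda_{k+1}-\lambda_j)\big]\int_\Omega\langle\nabla f,T(\nabla f)\rangle u_j^2\,\mathrm{dm}$ and $\Lambda:=(\lambda_{k+2}-\lambda_j)(\lambda_{k+1}-\lambda_j)$, I obtain the scalar inequality
\[
\alpha^2 L \le \alpha^2\int_\Omega P^2\,\mathrm{dm}+\alpha^4\int_\Omega Q^2\,\mathrm{dm}+\Lambda .
\]
If $L\le\int_\Omega P^2\,\mathrm{dm}$ the asserted bound is immediate, so I may assume $L-\int_\Omega P^2\,\mathrm{dm}>0$; dividing by $\alpha^2$ gives $L-\int_\Omega P^2\,\mathrm{dm}\le \alpha^2\int_\Omega Q^2\,\mathrm{dm}+\Lambda/\alpha^2$. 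Minimizing the right-hand side over $\alpha>0$ by AM--GM (the minimum being attained at $\alpha^4=\Lambda/\int_\Omega Q^2\,\mathrm{dm}$) produces the bound $2\sqrt{\Lambda\int_\Omega Q^2\,\mathrm{dm}}$. Since $\int_\Omega Q^2\,\mathrm{dm}=\int_\Omega\langle\nabla f,T(\nabla f)\rangle^2 u_j^2\,\mathrm{dm}$ and $\int_\Omega P^2\,\mathrm{dm}$ is exactly the last integral in the statement, this is precisely the claimed inequality.

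The step requiring care is the applicability of Lemma~\ref{L3.2} to $g=e^{\sqrt{-1}\,\alpha f}$, which presupposes that $gu_j$ is not a $\mathbb{C}$-linear combination of $u_1,\dots,u_{k+1}$ and that $\int_\Omega gu_ju_{k+1}\,\mathrm{dm}\neq0$. I expect this to be the main obstacle: both quantities are real-analytic (indeed entire) in $\alpha$ because $f$ is bounded on $\overline{\Omega}$, so unless they vanish identically they fail only on a discrete set of values of $\alpha$. As the three integrals $L$, $\int_\Omega P^2\,\mathrm{dm}$ and $\int_\Omega Q^2\,\mathrm{dm}$ are independent of $\alpha$, the polynomial inequality displayed above holds on a dense set of $\alpha$ and hence, by continuity, for every $\alpha>0$; this legitimizes the optimization at the (possibly non-admissible) optimal value. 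In the degenerate case where the analytic coefficient vanishes identically one falls back on the trivial case $L\le\int_\Omega P^2\,\mathrm{dm}$, so the estimate holds in all cases.
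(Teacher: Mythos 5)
Your proposal is correct and follows the paper's own proof essentially verbatim: the same trial function $g=e^{i\alpha f}$, the same computations of $\nabla g$, $\mathcal{L}g$ and the three integrals in \eqref{3.1}, and the same optimization over $\alpha$ (the paper substitutes the explicit optimal value $\alpha^{2}=\big(\Lambda/\int_{\Omega}\langle\nabla f,T(\nabla f)\rangle^{2}u_j^{2}\,\mathrm{dm}\big)^{1/2}$ rather than quoting AM--GM, which is the same step). Your extra discussion of the admissibility hypotheses of Lemma~\ref{L3.2} is care the paper omits entirely, though your fallback in the fully degenerate case (where $\int_{\Omega}gu_ju_{k+1}\,\mathrm{dm}$ vanishes identically in $\alpha$) is not actually justified, since then the lemma yields no inequality at all; this does not affect the comparison with the paper's argument.
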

	
\begin{proof}
    We will start applying the Lemma~\ref{L3.2} for $g={e^{({i}\alpha f)}}$, where $f$ is a non-constant function $f\in C^3(\Omega)\cap C^{2}(\overline{\Omega})$, $\alpha\in\mathbb{R}\setminus\{0\}$ and $i=\sqrt{-1}$. We have
	\begin{equation}
        \nonumber
		\nabla{g} = \nabla({e^{{i}\alpha f}}) 
		= {e^{{i}\alpha f}} ({i}\alpha \nabla{f}),
	\end{equation}
    then
	\begin{equation}
    \nonumber
    \begin{split}
		\mathcal{L}{g}:&= \mathrm{div}_{\eta}(T(\nabla{g}) = \mathrm{div}(T(\nabla{g})) - \langle\!\langle T(\nabla{g}),\nabla{\eta}\rangle\!\rangle_p\\
		&= \mathrm{div}({e^{{i}\alpha f}}{i}\alpha T(\nabla{f})) - \langle\!\langle {i}\alpha {e^{{i}\alpha f}} T(\nabla{f}) , \nabla{\eta} \rangle\!\rangle_p\\
		&= {i}\alpha {e^{{i}\alpha f}} \mathrm{div}(T(\nabla{f})) + \langle\!\langle T(\nabla{f}) , \overline{-{\alpha}^{2} {e^{{i}\alpha f}}\nabla{f}} \rangle\!\rangle_p- {i}\alpha {e^{{i}\alpha f}}\langle\!\langle T(\nabla{f}) , \nabla{\eta} \rangle\!\rangle_p\\
		&=  {i}\alpha {e^{{i}\alpha f}}\mathrm{div}_{\eta}(T(\nabla{f})) - {\alpha}^{2} {e^{{i}\alpha f}} \langle\!\langle T(\nabla{f}) , \nabla{f} \rangle\!\rangle_p\\
		&= {i}\alpha ({e^{{i}\alpha f}})\mathcal{L}{f} - {\alpha}^{2} {e^{{i}\alpha f}} \langle\!\langle T(\nabla{f}) , \nabla{f} \rangle\!\rangle_p
    \end{split}
	\end{equation}
    and
	\begin{equation}\label{3.10}
		\int_{\Omega}|gu_j|^{2} \mathrm{dm} = \int_{\Omega}\big|{e^{{i}\alpha f}}u_j\big|^{2} \mathrm{dm}
		= \int_{\Omega}\big|{e^{{i}\alpha f}}\big|^{2} |u_j|^{2} \mathrm{dm} = \int_{\Omega}u_j^{2} \mathrm{dm} = 1.
	\end{equation}
    Therefore we obtain
	\begin{equation}\label{3.11}
	\begin{split}
		\int_{\Omega}\langle\!\langle \nabla{g}, T(\nabla{g}) \rangle\!\rangle_pu_j^{2}\ \mathrm{dm} &= \int_{\Omega}{i}\alpha
		{e^{{i}\alpha f}}\langle\!\langle \nabla{f} , {i}\alpha{e^{{i}\alpha f}}T(\nabla{f}) \rangle\!\rangle_p\mathrm{dm} \\
		&= \int_{\Omega}{i}\alpha
		{e^{{i}\alpha f}}\overline{{i}\alpha{e^{{i}\alpha f}}}\langle\!\langle \nabla{f}, T(\nabla{f}) \rangle\!\rangle_p\mathrm{dm} \\
		&=\int_{\Omega}|{i}\alpha ({e^{{i}\alpha f}})|^{2}\langle \nabla{f}, T(\nabla{f}) \rangle u_j^{2}\ \mathrm{dm} \\
		&= \alpha^{2}\int_{\Omega}
		\langle \nabla{f}, T(\nabla{f}) \rangle u_j^{2}\ \mathrm{dm}
		\end{split}
	\end{equation} 
    and
    \begin{equation}
	\begin{split}
		\int_{\Omega}&\Big|2\langle\!\langle T(\nabla{u_j}),\nabla\overline{g} \rangle\!\rangle_p+ u_j\mathcal{L}{g}\Big|^{2} \mathrm{dm} \\
        &= \int_{\Omega}\Big|2\langle\!\langle T(\nabla{u_j}),\overline{{i}\alpha{e^{{i}\alpha f}}\nabla{f}} \rangle\!\rangle_p+ u_j\mathcal{L}{g}\Big|^{2} \mathrm{dm} \\
		&= \int_{\Omega}\Big|2{i}\alpha {e^{{i}\alpha f}}\langle\!\langle T(\nabla{u_j}),\nabla{f} \rangle\!\rangle_p + u_j{i}\alpha 
		{e^{{i}\alpha f}}\mathcal{L}{f} - u_j\alpha^{2}
		{e^{{i}\alpha f}}\langle\!\langle T(\nabla{f}),\nabla{f} \rangle\!\rangle_p\Big|^{2} \mathrm{dm} \\[12pt]
		&= \int_{\Omega}\Big|{e^{{i}\alpha f}}\Big[{i}\alpha \big(2 \langle T(\nabla{u_j}), \nabla{f} \rangle + u_j\mathcal{L}{f}\big) - u_j\alpha^{2}\langle T(\nabla{f}),\nabla{f} \rangle \Big]\Big|^{2} \mathrm{dm} \\[12pt]
		&= \int_{\Omega}\Big|{i}\alpha \big(2 \langle T(\nabla{u_j}), \nabla{f} \rangle + u_j\mathcal{L}{f}\big) - u_j\alpha^{2}\langle T(\nabla{f}),\nabla{f} \rangle \Big|^{2} \mathrm{dm},\nonumber
	\end{split}
	\end{equation}
    that is,
    \begin{equation}\label{3.12}
	\begin{split}
		\int_{\Omega}&\Big|2\langle\!\langle T(\nabla{u_j}),\nabla\overline{g} \rangle\!\rangle_p+ u_j\mathcal{L}{g}\Big|^{2} \mathrm{dm} \\
        &= \alpha^{2}\int_{\Omega} \Big(2\langle T(\nabla{u_j}), \nabla{f} \rangle + u_j\mathcal{L}{f}\Big)^{2} \mathrm{dm} + \alpha^4\int_{\Omega} \langle \nabla{f},T(\nabla{f})\rangle^{2} u_j^{2}\ \mathrm{dm}.
	\end{split}
	\end{equation}
 
    Substituting \eqref{3.10}, \eqref{3.11} and \eqref{3.12} into \eqref{3.1} we have
	\begin{equation}
	\nonumber
    \begin{split}
		&\alpha^{2}\Big((\lambda_{k+2} - \lambda_j) +
		(\lambda_{k+1} - \lambda_j)\Big)\int_{\Omega}
		\langle \nabla{f}, T(\nabla{f}) \rangle u_j^{2}\ \mathrm{dm} \\
		&\quad \leq \alpha^4\int_{\Omega}
		\langle \nabla{f}, T(\nabla{f}) \rangle^{2} u_j^{2}\ \mathrm{dm}
		+ \alpha^{2}\int_{\Omega}\Big(2\langle T(\nabla{u_j}), \nabla{f} \rangle + u_j\mathcal{L}{f}\Big)^{2} \mathrm{dm} \\
		&\qquad + (\lambda_{k+2} - \lambda_j)(\lambda_{k+1} - \lambda_j),
    \end{split}
	\end{equation} 
    and dividing the previous expression by $\alpha^{2}$ on both sides, we get
	{\small\begin{equation}\label{3.13}
	\begin{split}
		&\Big((\lambda_{k+2} - \lambda_j) +
		(\lambda_{k+1} - \lambda_j)\Big)\!\int_{\Omega}
		\langle \nabla{f}, T(\nabla{f}) \rangle u_j^{2}\ \mathrm{dm} \\
		&\leq \alpha^{2}\!\int_{\Omega}\!
		\langle\nabla{f}, T(\nabla{f}) \rangle^{2} u_j^{2}\ \mathrm{dm}\!+\!\frac{1}{\alpha^{2}}
		(\lambda_{k+2}\!-\!\lambda_j)
		(\lambda_{k+1}\!-\!\lambda_j)
		\!+\!\int_{\Omega}\! \Big(2\langle T(\nabla{u_j}), \! \nabla{f} \rangle\!+\! u_j\mathcal{L}{f}\Big)^{2}\! \mathrm{dm}.
	\end{split}
	\end{equation}}
 
	Since the inequality \eqref{3.13} is valid for any $\alpha\neq 0$, $(\lambda_{k+2} - \lambda_j)(\lambda_{k+1} - \lambda_j) \neq 0$ and $\displaystyle\int_{\Omega}\langle \nabla{f}, T(\nabla{f}) \rangle^{2} u_j^{2}\ \mathrm{dm} \neq 0$, we can choose 
    \begin{equation}
        \nonumber
        \alpha^{2} = \left(\dfrac{(\lambda_{k+2} - \lambda_j)(\lambda_{k+1} - \lambda_j)}{\int_{\Omega}\langle \nabla{f}, T(\nabla{f}) \rangle^{2} u_j^{2}\ \mathrm{dm}} \right)^{\!\!\!\frac{1}{2}}
    \end{equation}
    to obtain
	{\small\begin{equation}
    \nonumber
    \begin{split}
		\Big(&(\lambda_{k+2} - \lambda_j) +
		(\lambda_{k+1} - \lambda_j)\Big)\int_{\Omega}
		\langle \nabla{f}, T(\nabla{f}) \rangle u_j^{2}\ \mathrm{dm} \\ 
		\leq& \left(\dfrac{(\lambda_{k+2} - \lambda_j)(\lambda_{k+1} - \lambda_j)
		}{\int_{\Omega}	\langle \nabla{f}, T(\nabla{f}) \rangle^{2} u_j^{2}\ \mathrm{dm}} \right)^{\frac{1}{2}}
		\int_{\Omega}\langle \nabla{f}, T(\nabla{f}) \rangle^{2} u_j^{2}\ \mathrm{dm}\!+\! \int_{\Omega} \!\Big(2\langle T(\nabla{u_j}), \nabla{f} \rangle \!+\! u_j\mathcal{L}{f}\Big)^{2} \! \mathrm{dm} \\
		&+ \left(\dfrac{\int_{\Omega}
		\langle \nabla{f}, T(\nabla{f}) \rangle^{2} u_j^{2}\ \mathrm{dm}}{(\lambda_{k+2} - \lambda_j)(\lambda_{k+1} - \lambda_j)
		}\right)^{\frac{1}{2}}(\lambda_{k+2} - \lambda_j)(\lambda_{k+1} - \lambda_j) \\[10pt]
		=& \Big((\lambda_{k+2} \!-\! \lambda_j)(\lambda_{k+1} \!-\! \lambda_j)
		\Big)^{\frac{1}{2}}\left(\int_{\Omega}	\langle \nabla{f}, T(\nabla{f}) \rangle^{2} u_j^{2}\ \mathrm{dm} \right)^{\frac{1}{2}}\!+\! \int_{\Omega}\! \Big(2\langle T(\nabla{u_j}), \nabla{f} \rangle + u_j\mathcal{L}{f}\Big)^{2}\! \mathrm{dm} \\
		&+ \left(\int_{\Omega}	\langle \nabla{f}, T(\nabla{f}) \rangle^{2} u_j^{2}\ \mathrm{dm} \right)^{\frac{1}{2}}\Big((\lambda_{k+2} - \lambda_j)(\lambda_{k+1} - \lambda_j)\Big)^\frac{1}{2} \\[10pt]
		=& 2\sqrt{\!(\lambda_{k+2} \!-\! \lambda_j)(\lambda_{k+1} \!-\! \lambda_j)\int_{\Omega}	\langle \nabla{f}, T(\nabla{f}) \rangle^{2} u_j^{2}\ \mathrm{dm}} 
		\!+\! \int_{\Omega}\! \Big(\! 2\langle T(\nabla{u_j}),\! \nabla{f} \rangle \!+\! u_j\mathcal{L}{f}\Big)^{2} \mathrm{dm},
    \end{split}
	\end{equation}}
    this concludes the proof of Corollary~\ref{C3.1}.
\end{proof}

From Corollary~\ref{C3.1} we obtain the following result that will be useful to prove our main theorems.
	
\begin{cor}\label{C3.2}
    Under the same setup as in Lemma~\ref{L3.2}, for any real-valued function $f\in C^3(\Omega)\cap C^{2}(\overline{\Omega})$ with $|\nabla{f}|^{2}=1$, we have 
	{\small
	\begin{equation}\label{3.14}
		(\lambda_{k+2} \!-\! \lambda_{k+1})^{2} \!\leq\! 
		\frac{16}{\sigma}\! \left(\!\int_{\Omega}\!\langle T(\nabla{u_{j}}),\! \nabla{f} \rangle^{2} \mathrm{dm} \!-\! \frac{1}{4}\!\int_{\Omega}\!(\mathcal{L}{f})^{2}u_{j}^{2} \mathrm{dm}
		\!-\! \frac{1}{2}\!\int_{\Omega}\!
		\langle\nabla(\mathcal{L}{f}),\!  T(\nabla{f})\rangle u_{j}^{2} \mathrm{dm}\!\right)\!\lambda_{k+2}.
	\end{equation}}
	Moreover,
	{\small
	\begin{equation}\label{3.15}
		\lambda_{k+2} - \lambda_{k+1} \leq \frac{4}{\sqrt{\sigma}}\left(\delta\lambda_{j} - \frac{1}{4}\int_{\Omega}(\mathcal{L}{f})^{2}u_{j}^{2} \mathrm{dm} - \frac{1}{2}\int_{\Omega}\langle\nabla(\mathcal{L}{f}),\!  T(\nabla{f})\rangle u_{j}^{2} \mathrm{dm}\right)^\frac{1}{2}\sqrt{\lambda_{k+2}},
	\end{equation}}
	where $\sigma =2\delta -\varepsilon$, with $\varepsilon$ and $\delta$ given by \eqref{2.1}.
\end{cor}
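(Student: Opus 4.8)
The plan is to start from the inequality of Corollary~\ref{C3.1}, specialized to a function $f$ with $|\nabla f|^2=1$, and to convert it into a statement purely about the gap $\lambda_{k+2}-\lambda_{k+1}$. Writing $a=\lambda_{k+2}-\lambda_{j}$ and $b=\lambda_{k+1}-\lambda_{j}$ (so $a>b>0$ and $a-b=\lambda_{k+2}-\lambda_{k+1}$), and abbreviating $P=\int_{\Omega}\langle\nabla f,T(\nabla f)\rangle u_{j}^{2}\,\mathrm{dm}$, $Q=\int_{\Omega}\langle\nabla f,T(\nabla f)\rangle^{2}u_{j}^{2}\,\mathrm{dm}$ and $R=\int_{\Omega}(2\langle T(\nabla u_{j}),\nabla f\rangle+u_{j}\mathcal{L}f)^{2}\,\mathrm{dm}$, the content of Corollary~\ref{C3.1} is precisely $(a+b)P\le 2\sqrt{ab\,Q}+R$. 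Two ingredients then feed into this relation: a pointwise two-sided control of $P$ and $Q$ coming from the bounds on $T$, and an integration-by-parts rewriting of $R$ that produces exactly the three integrals appearing in \eqref{3.14}.

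First I would rewrite $R$. Expanding the square gives $R=4\int_{\Omega}\langle T(\nabla u_{j}),\nabla f\rangle^{2}\,\mathrm{dm}+4\int_{\Omega}u_{j}\langle T(\nabla u_{j}),\nabla f\rangle\mathcal{L}f\,\mathrm{dm}+\int_{\Omega}u_{j}^{2}(\mathcal{L}f)^{2}\,\mathrm{dm}$, and the middle term is removed by the weighted divergence theorem \eqref{partes}. Applying it to the vector field $u_{j}^{2}T(\nabla f)$, for which $\mathrm{div}_{\eta}(u_{j}^{2}T(\nabla f))=u_{j}^{2}\mathcal{L}f+2u_{j}\langle T(\nabla u_{j}),\nabla f\rangle$, and noting that the boundary integral vanishes since $u_{j}=0$ on $\partial\Omega$, yields the identity $\int_{\Omega}u_{j}^{2}(\mathcal{L}f)^{2}\,\mathrm{dm}+2\int_{\Omega}u_{j}\langle T(\nabla u_{j}),\nabla f\rangle\mathcal{L}f\,\mathrm{dm}=-\int_{\Omega}u_{j}^{2}\langle\nabla(\mathcal{L}f),T(\nabla f)\rangle\,\mathrm{dm}$. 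Substituting this cancels the cross term and gives $R=4\,G$, where $G=\int_{\Omega}\langle T(\nabla u_{j}),\nabla f\rangle^{2}\,\mathrm{dm}-\tfrac14\int_{\Omega}(\mathcal{L}f)^{2}u_{j}^{2}\,\mathrm{dm}-\tfrac12\int_{\Omega}\langle\nabla(\mathcal{L}f),T(\nabla f)\rangle u_{j}^{2}\,\mathrm{dm}$ is the parenthesis in \eqref{3.14}.

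Next I would extract the gap. Since $|\nabla f|^{2}=1$, \eqref{2.1} gives $\varepsilon\le\langle\nabla f,T(\nabla f)\rangle\le\delta$ pointwise, hence $\varepsilon\le P\le\delta$, while \eqref{T-property} gives $\langle\nabla f,T(\nabla f)\rangle^{2}\le\delta\langle\nabla f,T(\nabla f)\rangle$ and therefore $Q\le\delta P$. Factoring $(a-b)^{2}=(\sqrt a-\sqrt b)^{2}(\sqrt a+\sqrt b)^{2}$ and using $(\sqrt a+\sqrt b)^{2}\le 2(a+b)\le 4\lambda_{k+2}$ (the last step because $\lambda_{k+1}\le\lambda_{k+2}$ and $\lambda_{j}>0$), the inequality $(a+b)P\le 2\sqrt{ab\,Q}+4G$ is to be rearranged, through $4ab=(a+b)^{2}-(a-b)^{2}$ together with the two-sided control of $P$ and $Q$, into $(\lambda_{k+2}-\lambda_{k+1})^{2}\le\frac{16}{\sigma}G\,\lambda_{k+2}$ with $\sigma=2\delta-\varepsilon$, which is \eqref{3.14}. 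I expect this algebraic passage to be the main obstacle: one must balance the term $2\sqrt{ab\,Q}$ against $(a+b)P$ so that the full spread $[\varepsilon,\delta]$ of $T$ condenses into the single constant $\sigma=2\delta-\varepsilon$ while the right-hand side stays linear in $\lambda_{k+2}$, since a careless use of $Q\le\delta P$ alone generates an unwanted contribution of order $\lambda_{k+2}^{2}$ that has to be controlled.

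Finally, \eqref{3.15} would follow from \eqref{3.14} by estimating the first integral of $G$. By the Cauchy--Schwarz inequality for the inner product $\langle T\,\cdot\,,\cdot\rangle$ one has $\langle T(\nabla u_{j}),\nabla f\rangle^{2}\le\langle T(\nabla u_{j}),\nabla u_{j}\rangle\langle T(\nabla f),\nabla f\rangle\le\delta\,\langle T(\nabla u_{j}),\nabla u_{j}\rangle$, whence $\int_{\Omega}\langle T(\nabla u_{j}),\nabla f\rangle^{2}\,\mathrm{dm}\le\delta\int_{\Omega}T(\nabla u_{j},\nabla u_{j})\,\mathrm{dm}=\delta\lambda_{j}$. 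Replacing the first term of $G$ by the larger quantity $\delta\lambda_{j}$ in \eqref{3.14} and taking square roots then gives $\lambda_{k+2}-\lambda_{k+1}\le\frac{4}{\sqrt\sigma}\big(\delta\lambda_{j}-\tfrac14\int_{\Omega}(\mathcal{L}f)^{2}u_{j}^{2}\,\mathrm{dm}-\tfrac12\int_{\Omega}\langle\nabla(\mathcal{L}f),T(\nabla f)\rangle u_{j}^{2}\,\mathrm{dm}\big)^{1/2}\sqrt{\lambda_{k+2}}$, which is exactly \eqref{3.15}.
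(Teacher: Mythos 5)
Your reduction of $R=\int_{\Omega}\big(2\langle T(\nabla u_{j}),\nabla f\rangle+u_{j}\mathcal{L}f\big)^{2}\,\mathrm{dm}$ to $4G$ by applying the weighted divergence theorem to $\mathcal{L}f\cdot u_{j}^{2}T(\nabla f)$ is exactly the computation the paper performs, and your passage from \eqref{3.14} to \eqref{3.15} (replacing the first integral of $G$ by $\delta\lambda_{j}$) also coincides with the paper's, the only cosmetic difference being that you invoke the Cauchy--Schwarz inequality for the bilinear form $\langle T\cdot,\cdot\rangle$ where the paper uses \eqref{T-property} directly. The genuine gap is the step you yourself flag as ``the main obstacle'': you never derive $(\lambda_{k+2}-\lambda_{k+1})^{2}\le\frac{16}{\sigma}G\,\lambda_{k+2}$ from $(a+b)P\le 2\sqrt{ab\,Q}+4G$; you only describe what the rearrangement would have to accomplish. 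Since, beyond the identity $R=4G$, this rearrangement \emph{is} the content of \eqref{3.14}, the proposal as written does not prove the corollary.

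For comparison, the paper's route is: bound $P\ge\varepsilon$ and $Q\le\delta^{2}$ to get $\varepsilon(a+b)-2\delta\sqrt{ab}\le R=4G$, then assert $\sigma(\sqrt a-\sqrt b)^{2}\le\varepsilon(a+b)-2\delta\sqrt{ab}$ with $\sigma=2\delta-\varepsilon$, and finally multiply by $(\sqrt a+\sqrt b)^{2}\le 4\lambda_{k+2}$. You should know that your unease about this passage is well founded: the asserted comparison is equivalent to $2(\delta-\varepsilon)\big(\sqrt{ab}-(a+b)\big)\ge 0$, which fails for all $a,b>0$ whenever $\delta>\varepsilon$ (and the intermediate inequality the paper displays reduces to $\sqrt{ab}/(a+b)\le 1$, which carries no information); it is an equality only in the case $\delta=\varepsilon$. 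So the step you were unable to supply is precisely the step at which the paper's own argument is not valid as written, and completing your proof would require either a corrected version of this algebraic comparison (possibly with a different constant in place of $\sigma$) or a different route from Corollary~\ref{C3.1} to \eqref{3.14}.
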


\begin{proof}
    Since $|\nabla{f}|^{2} = 1$, from Corollary~\ref{C3.1} we have
	\begin{equation}
    \nonumber
    \begin{split}
		\varepsilon\Big(\!(\!\lambda_{k+2} \!-\! \lambda_{j}\!)\! +\! (\!\lambda_{k+1} \!-\! \lambda_{j}\!)\!\Big)\int_{\Omega} u_j^{2}\ \mathrm{dm} &\leq   2\delta\sqrt{\!\big(\!\lambda_{k+2} \!-\! \lambda_{j}\!\big)\!\big(\!\lambda_{k+1} \!-\! \lambda_{j}\!\big)\int_{\Omega}u_j^{2}\ \mathrm{dm}} \\
		&\quad + \int_{\Omega}\Big(2\langle T(\nabla{u_{j}}), \nabla{f} \rangle + u_{j}\mathcal{L}{f}\Big)^{2} \mathrm{dm},
    \end{split}
	\end{equation}
	that is
	{\small\begin{equation}
        \nonumber
		\varepsilon\Big(\!(\lambda_{k+2} \!-\! \lambda_{j}) \!+\! (\lambda_{k+1} \!-\! \lambda_{j})\!\Big) - 2\delta\sqrt{\big(\lambda_{k+2} \!-\! \lambda_{j}\big)\big(\lambda_{k+1} \!-\! \lambda_{j}\big)} \!\leq\! \int_{\Omega}\!\Big(\!2\langle T(\nabla{u_{j}}),\! \nabla{f} \rangle \!+\! u_{j}\mathcal{L}{f}\Big)^{2} \mathrm{dm}.
	\end{equation}}
    Taking $\sigma = 2\delta -\varepsilon$ we get
	\begin{equation}
    \nonumber
		\frac{\sqrt{(\lambda_{k+2} - \lambda_{j})(\lambda_{k+1} - \lambda_{j})}}{(\lambda_{k+2} - \lambda_{j}) + (\lambda_{k+1} - \lambda_{j})} \leq \frac{\varepsilon - \sigma}{2(\delta - \sigma)} ,
	\end{equation}
    so we obtain
	{\small
	\begin{equation}
    \nonumber
    \begin{split}
		\sigma \Big(\!\sqrt{\lambda_{k+2} \!-\! \lambda_{j}} &\!-\! \sqrt{\lambda_{k+1} \!-\! \lambda_{j}}\!\Big)^{2}\! \leq\! \varepsilon\Big(\!(\lambda_{k+2} \!-\! \lambda_{j}) \!+\! (\lambda_{k+1} \!-\! \lambda_{j})\!\Big) \!-\! 2\delta \sqrt{\big(\!\lambda_{k+2} \!-\! \lambda_{j}\!\big)\big(\!\lambda_{k+1} \!-\! \lambda_{j}\!\big)} \\[12pt]
		&\leq\! \int_{\Omega}\!\Big(2\langle T(\nabla{u_{j}}), \nabla{f} \rangle + u_{j}\mathcal{L}{f}\Big)^{2} \mathrm{dm} \\[12pt]
		&=\! \int_{\Omega}\!\Big(\!4\langle T(\nabla{u_{j}}), \nabla{f} \rangle^{2} \!+\! (\mathcal{L}{f})^{2}u_{j}^{2} \!+\! 4\langle T(\nabla{u_{j}}),\! \nabla{f} \rangle u_{j}\mathcal{L}{f}\!\Big) \mathrm{dm} \\[12pt]
		&=\! 4\!\int_{\Omega}\!\langle T(\nabla{u_{j}}),\! \nabla{f} \rangle^{2} \mathrm{dm} \!+\! \int_{\Omega}\!(\mathcal{L}{f})^{2}u_{j}^{2} \mathrm{dm} \!+\! 4\!\int_{\Omega}\!\langle T(\nabla{u_{j}}),\! \nabla{f} \rangle u_{j}\mathcal{L}{f} \mathrm{dm}.
    \end{split}
	\end{equation}}
	    Applying the Green's formula and since that $f$ is a real-valued function, we have
	\begin{equation}
    \nonumber
    \begin{split}
		4\int_{\Omega}\langle T(\nabla{u_{j}}), \nabla{f} \rangle u_{j}\mathcal{L}{f} \mathrm{dm}
		&= 2\int_{\Omega}\langle T(\nabla{u_{j}^{2}}), (\mathcal{L}{f})\nabla{f} \rangle \mathrm{dm} =  2\int_{\Omega} \langle \nabla{u_{j}^{2}},T((\mathcal{L}{f})\nabla{f})\rangle \mathrm{dm} \\
		&= - 2\int_{\Omega}u_{j}^{2}(\mathrm{div}_{\eta}[T((\mathcal{L}{f}) \nabla{f})]) \mathrm{dm} \\ 
		& = - 2\int_{\Omega}u_{j}^{2} \Big(\mathcal{L}{f} (\mathrm{div}_{\eta}[T(\nabla{f})]) + \langle T(\nabla{f}), \nabla(\mathcal{L}{f}) \rangle\Big) \mathrm{dm} \\
		& = - 2\int_{\Omega}u_{j}^{2}(\mathcal{L}{f})^{2} \mathrm{dm} - 2\int_{\Omega}u_{j}^{2}\langle\nabla(\mathcal{L}{f}),  T(\nabla{f})\rangle \mathrm{dm}.
    \end{split}
	\end{equation}	
    Thus
		\begin{equation}
	\nonumber
    \begin{split}
		\sigma \Big(\!\sqrt{\!\lambda_{k+2} \!-\! \lambda_{j}}& \!-\! \sqrt{\!\lambda_{k+1} \!-\! \lambda_{j}}\!\Big)^{2} \\
        \leq & 4\!\int_{\Omega}\!\langle T(\nabla{u_{j}}),\! \nabla{f} \rangle^{2} \mathrm{dm} \!+\! \int_{\Omega}\!(\mathcal{L}{f})^{2}u_{j}^{2} \mathrm{dm} \!+\! 4\!\int_{\Omega}\!\langle T(\nabla{u_{j}}),\! \nabla{f} \rangle u_{j}\mathcal{L}{f} \mathrm{dm} \\
		=& 4\int_{\Omega}\langle T(\nabla{u_{j}}), \nabla{f} \rangle^{2} \mathrm{dm} \!+\! \int_{\Omega}(\mathcal{L}{f})^{2}u_{j}^{2} \mathrm{dm} - 2\int_{\Omega}
		(\mathcal{L}{f})^{2}u_{j}^{2} \mathrm{dm} \\
  &- 2\int_{\Omega}\langle\nabla(\mathcal{L}{f}),  T(\nabla{f})\rangle u_{j}^{2} \mathrm{dm} \\
		=& 4\!\int_{\Omega}\!\langle T(\nabla{u_{j}}),\! \nabla{f} \rangle^{2} \mathrm{dm} \!-\! \int_{\Omega}\!
		(\!\mathcal{L}{f}\!)^{2}u_{j}^{2}  \mathrm{dm} \!-\! 2\!\int_{\Omega}\!
		\langle\nabla(\mathcal{L}{f}),\!  T(\nabla{f})\rangle u_{j}^{2} \mathrm{dm}.
    \end{split}
	\end{equation}
    Multiplying the previous inequality by $\dfrac{1}{\sigma}\left(\sqrt{\lambda_{k+2} - \lambda_{j}} + \sqrt{\lambda_{k+1} - \lambda_{j}}\right)^{2}$ we find
	{\small
    \begin{equation}
    \nonumber
    \begin{split}
		\Big(\!\lambda_{k+2} \!-\! \lambda_{k+1}\!\Big)^{2}
		\!&\leq\! \frac{4}{\sigma}\! \left(\int_{\Omega}\langle T(\nabla{u_{j}}), \nabla{f} \rangle^{2} \mathrm{dm} - \frac{1}{4}\int_{\Omega}(\mathcal{L}{f})^{2}u_{j}^{2} \mathrm{dm} \right. \\
		&\quad \left. - \frac{1}{2}\int_{\Omega}
		\langle\nabla(\mathcal{L}{f}),  T(\nabla{f})\rangle u_{j}^{2}\mathrm{dm}\right)\!\bigg(\sqrt{\lambda_{k+2} - \lambda_{j}} + \sqrt{\lambda_{k+1} - \lambda_{j}}\bigg)^{2} \\[12pt]
		&\leq\! \frac{16}{\sigma}\! \left(\!\int_{\Omega}\!\langle T(\nabla{u_{j}}),\! \nabla{f} \rangle^{2} \mathrm{dm} \!-\! \frac{1}{4}\!\int_{\Omega}\! (\mathcal{L}{f})^{2}u_{j}^{2} \mathrm{dm}
		\!-\! \frac{1}{2}\!\int_{\Omega}\!
		\langle\nabla(\mathcal{L}{f}),\!  T(\nabla{f})\rangle u_{j}^{2} \mathrm{dm}\!\right)\!\lambda_{k+2},
    \end{split}
	\end{equation}}
    which completes the proof of inequality~\eqref{3.14}, once that $\lambda_{k+1} \leq \lambda_{k+2}$ implies 
	\begin{equation}
    \nonumber
    \begin{split}
		\left(\sqrt{\lambda_{k+2} - \lambda_{j}} + \sqrt{\lambda_{k+1} - \lambda_{j}}\right)^{2}\! &\leq\! \left(\sqrt{\lambda_{k+2} - \lambda_{j}} + \sqrt{\lambda_{k+2} - \lambda_{j}}\right)^{2} \!=\! \left(2\sqrt{\lambda_{k+2} - \lambda_{j}} \right)^{2} \\
		&= 4(\lambda_{k+2} - \lambda_{j}) \leq 4\lambda_{k+2}.
    \end{split}
	\end{equation}
	
    Now, to prove \eqref{3.15} observe that from \eqref{T-property} and $|\nabla{f}|^{2} = 1$ we have
    \begin{equation}\label{3.16}
		\langle T(\nabla{u_{j}}),\nabla{f}\rangle^{2} \leq  |T(\nabla{u_{j}})|^{2}|\nabla{f}|^{2}
		=  |T(\nabla{u_{j}})|^{2} \leq \delta\langle\nabla{u_{j}}, T(\nabla{u_{j}})\rangle.
	\end{equation}
    Finally, applying \eqref{3.16} into \eqref{3.14} and remembering that $\displaystyle\lambda_{j}=\int_{\Omega}\langle\nabla{u_{j}}, T(\nabla{u_{j}})\rangle \mathrm{dm}$  we get
	{\small
    \begin{equation}
    \nonumber
    \begin{split}
		\Big(\!\lambda_{k+2} \!-\! \lambda_{k+1}\!\Big)^{2} &\leq\! \frac{16}{\sigma}\! \left(\!\int_{\Omega}\!\langle\nabla{f}, T(\nabla{u_{j}})\rangle \mathrm{dm} \!-\! \frac{1}{4}\!\int_{\Omega}\! (\mathcal{L}{f})^{2}u_{j}^{2} \mathrm{dm}
		\!-\! \frac{1}{2}\!\int_{\Omega}\!
		\langle\nabla(\mathcal{L}{f}),\!  T(\nabla{f})\rangle u_{j}^{2} \mathrm{dm}\!\right)\!\lambda_{k+2} \\[8pt]
		&\leq\! \frac{16}{\sigma}\! \left(\!\delta\int_{\Omega}\!\langle\nabla{u_{j}}, T(\nabla{u_{j}})\rangle \mathrm{dm} \!-\! \frac{1}{4}\!\int_{\Omega}\! (\mathcal{L}{f})^{2}u_{j}^{2} \mathrm{dm}
		\!-\! \frac{1}{2}\!\int_{\Omega}\!
		\langle\nabla(\mathcal{L}{f}),\!  T(\nabla{f})\rangle u_{j}^{2} \mathrm{dm}\!\right)\!\lambda_{k+2} \\[8pt]
		&= \frac{16}{\sigma}\!\left(\delta\lambda_{j} - \frac{1}{4}\int_{\Omega}(\mathcal{L}{f})^{2}u_{j}^{2} \mathrm{dm} - \frac{1}{2}\int_{\Omega}\langle\nabla(\mathcal{L}{f}),\!  T(\nabla{f})\rangle u_{j}^{2} \mathrm{dm}\right)\lambda_{k+2},
    \end{split}
	\end{equation}}
	this is sufficient to prove expression~\eqref{3.15}.
\end{proof}

\section{Proof of Theorems~\ref{T1.1}, \ref{T1.2} and \ref{T1.3}}

In addition to the auxiliary results, to proof the main results, we will make use of the following fact
\begin{lem}[\cite{CJM}]\label{Yang-Ineq1}
    Let $\Omega$ be a bounded domain in an $n$-dimensional complete Riemannian manifold $M^n$ isometrically immersed in $\mathbb{R}^m$, and $\lambda_{i}$ be the $i$-th eigenvalue of Problem~\eqref{problem1}. Then, we have
    \begin{equation}
        \nonumber
        \upsilon_{k+1} \leq \left(1+\frac{4\delta}{n\varepsilon}\right)k^{\frac{2\delta}{n\varepsilon}}\upsilon_1,
    \end{equation}
    where
    \begin{equation}
        \nonumber
        \upsilon_{k+1}=\lambda_{k+1}+\frac{n^2H_0^2+4C_0+T_0^2}{4\delta},
    \end{equation}
    with  $H_0$, $C_{0}$ and $T_{0}$ are given as in Theorem~\ref{T1.2}.
\end{lem}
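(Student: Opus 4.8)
The plan is to establish a Yang-type universal inequality for $\mathcal L$ and then feed it into the algebraic recursion of Cheng--Yang. First I would fix the isometric immersion $M^n\hookrightarrow\mathbb R^m$ and use the ambient Euclidean coordinate functions $x_1,\dots,x_m$, restricted to $\Omega$, as raw trial functions. For each eigenfunction $u_i$ and each coordinate $x_\alpha$ I would form the $L^2(\Omega,\mathrm{dm})$-orthogonal projection off the span of $u_1,\dots,u_k$, namely $\phi_{\alpha i}=x_\alpha u_i-\sum_{s=1}^k a_{\alpha i s}u_s$ with $a_{\alpha i s}=\int_\Omega x_\alpha u_i u_s\,\mathrm{dm}$, so that $\phi_{\alpha i}$ is orthogonal to each $u_s$ with $1\le s\le k$ and still vanishes on $\partial\Omega$. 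The Rayleigh--Ritz characterization of $\lambda_{k+1}$ then gives $\lambda_{k+1}\int_\Omega\phi_{\alpha i}^2\,\mathrm{dm}\le\int_\Omega\langle T(\nabla\phi_{\alpha i}),\nabla\phi_{\alpha i}\rangle\,\mathrm{dm}$ for every pair $(\alpha,i)$.

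Next I would expand both sides using the product rule $\mathcal L(x_\alpha u_i)=x_\alpha\mathcal L u_i+u_i\mathcal L x_\alpha+2\langle T(\nabla x_\alpha),\nabla u_i\rangle$ together with the integration-by-parts identity \eqref{partes}. The cross terms rearrange into a commutator controlled, on one side, by $\int_\Omega u_i^2\langle T(\nabla x_\alpha),\nabla x_\alpha\rangle\,\mathrm{dm}$ and, on the other, by $\int_\Omega\bigl(u_i\mathcal L x_\alpha+2\langle T(\nabla x_\alpha),\nabla u_i\rangle\bigr)^2\mathrm{dm}$. Summing over the ambient index $\alpha$ is where the geometry enters: since the immersion is isometric one has $\sum_\alpha|\nabla x_\alpha|^2=n$, whence $\varepsilon n\le\sum_\alpha\langle T(\nabla x_\alpha),\nabla x_\alpha\rangle\le\delta n$, while $\sum_\alpha(\mathcal L x_\alpha)^2$ is bounded by the second fundamental form together with the lower-order data, producing exactly the constant $n^2H_0^2+4C_0+T_0^2$: the mean-curvature piece $n^2H_0^2$ comes from $\square x_\alpha=\tr(\nabla^2 x_\alpha\circ T)$ via $\mathbf H_T=\tfrac1n\tr(\alpha\circ T)$, the drift piece $4C_0$ from the terms involving $\nabla\eta$, and the term $T_0^2$ from $\tr(\nabla T)$.

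Collecting the multiplicative factors $\varepsilon$ (from the lower ellipticity bound on $T$ in the denominator) and $\delta$ (from the upper bound in the numerator and in the shift constant) yields the Yang-type recursion
\begin{equation}
\nonumber
\sum_{i=1}^k(\upsilon_{k+1}-\upsilon_i)^2\le\frac{4\delta}{n\varepsilon}\sum_{i=1}^k(\upsilon_{k+1}-\upsilon_i)\,\upsilon_i,
\end{equation}
where $\upsilon_i=\lambda_i+\tfrac{n^2H_0^2+4C_0+T_0^2}{4\delta}$, so that $\upsilon_{k+1}-\upsilon_i=\lambda_{k+1}-\lambda_i$ and the shift is harmless on the left-hand side. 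Finally I would invoke the purely algebraic recursion lemma of Cheng--Yang: any nondecreasing positive sequence satisfying an inequality of this shape obeys $\upsilon_{k+1}\le\bigl(1+\tfrac{4\delta}{n\varepsilon}\bigr)k^{2\delta/(n\varepsilon)}\upsilon_1$, which is precisely the assertion.

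The main obstacle I anticipate is the precise bookkeeping of the anisotropic correction terms in $\sum_\alpha(\mathcal L x_\alpha)^2$. In the Laplacian case $\sum_\alpha(\Delta x_\alpha)^2=n^2|\vec H|^2$ cleanly, but here $\mathcal L x_\alpha=\square x_\alpha-\langle\nabla\eta,T(\nabla x_\alpha)\rangle$ mixes the $T$-trace of the second fundamental form (yielding $\mathbf H_T$ and the factor $n^2H_0^2$) with the gradient of the drifting function and with $\tr(\nabla T)$. Isolating each contribution so that it matches $C_0$ and $T_0$ exactly, while keeping the $\varepsilon$--$\delta$ ellipticity bounds on the correct side of every inequality, is the delicate part of the argument.
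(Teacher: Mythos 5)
This lemma is not proved in the paper at all: it is imported verbatim from the reference \cite{CJM} (Silva--Miranda--Ara\'ujo Filho, J.\ Math.\ Anal.\ Appl.\ 2024), so there is no in-paper proof to compare against. That said, your reconstruction follows exactly the route that the cited reference (and the whole Payne--P\'olya--Weinberger/Yang/Cheng--Yang lineage) takes: trial functions $x_\alpha u_i$ projected off the first $k$ eigenfunctions, Rayleigh--Ritz for the quadratic form $\int_\Omega\langle T(\nabla\cdot),\nabla\cdot\rangle\,\mathrm{dm}$, summation over the ambient index using $\sum_\alpha|\nabla x_\alpha|^2=n$ and the ellipticity bounds $\varepsilon\le\langle T(X),X\rangle\le\delta$, and finally the Cheng--Yang algebraic recursion applied to the shifted sequence $\upsilon_i$. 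The two places where your sketch is thinnest are (i) the derivation of the quadratic recursion with the sharp constant $\tfrac{4\delta}{n\varepsilon}$, which requires the usual Cauchy--Schwarz step with a free parameter that is then optimized (your phrase ``the cross terms rearrange into a commutator controlled by\dots'' hides this), and (ii) the identification $\sum_\alpha(\mathcal{L}x_\alpha)^2\le n^2H_0^2+4C_0+T_0^2$ after integration against $u_i^2$, where the cross terms between $\square x_\alpha$, $\langle\nabla\eta,T(\nabla x_\alpha)\rangle$ and $\mathrm{tr}(\nabla T)$ must be integrated by parts to produce precisely the divergence expression defining $C_0$; you correctly flag this as the delicate part. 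Modulo those computations, the outline is correct and is, as far as one can tell, the same argument as in \cite{CJM}.
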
 
    
The first result to be presented states that Conjecture~\ref{Cj2} is true in the Euclidean case.  We emphasize that the main tool in the proof is Corollary~\ref{C3.2}.
 
\subsection{Proof of Theorem~\ref{T1.1}}
\begin{proof}
	Without loss of generality, we assume that $\lambda_{k+1}<\lambda_{k+2}$. Let $\{x_1,\cdots,x_n\}$ be the standard coordinate functions in $\mathbb{R}^{n}$. Since $\nabla{x_l} = e_l$ for all $l=1,\cdots, n$, we have $|\nabla{x_l}| = 1$, taking $j=1$ and $f=x_l$ in \eqref{3.14}. Thus
	\begin{equation}
    \nonumber
    \begin{split}
		(\lambda_{k+2} - \lambda_{k+1})^{2}\! &= \frac{16}{\sigma} \left(\int_{\Omega} \!\langle{e_l}, T(\nabla{u_1})\rangle^{2} \mathrm{dm} \!-\! \dfrac{1}{4} \int_{\Omega}\! \big(\mathrm{div}(T(e_l)) - \langle \nabla{\eta}, T(e_l)\rangle\big)^{2}u_1^{2} \mathrm{dm} \right. \\ 
		& \left.\quad \ - \dfrac{1}{2} \int_{\Omega} \!\langle\nabla\big(\mathrm{div}(T(e_l)) - \langle \nabla{\eta}, T(e_l)\rangle\big), T({e_l})\rangle u_1^{2} \mathrm{dm}\right)\!\lambda_{k+2},
    \end{split}
	\end{equation}
    and summing over l from 1 to n, we have 
	{\small
	\begin{equation}\label{4.1}
	\begin{split}
		n(\lambda_{k+2} - \lambda_{k+1})^{2}\! &\leq \frac{16}{\sigma} \left(\int_{\Omega} \! \sum_{l=1}^{n}\langle{e_l}, T(\nabla{u_1})\rangle^{2} \mathrm{dm} \!-\! \dfrac{1}{4} \int_{\Omega}\! \sum_{l=1}^{n}\big(\mathrm{div}(T(e_l) - \langle T(\nabla{\eta}), e_l\rangle\big)^{2}u_1^{2} \mathrm{dm} \right. \\ 
		& \left.\quad \ - \dfrac{1}{2} \int_{\Omega} \!\sum_{l=1}^{n}\langle\nabla\big(\mathrm{div}(T(e_l) - \langle T(\nabla{\eta}), e_l\rangle\big), T({e_l})\rangle u_1^{2} \mathrm{dm}\right)\!\lambda_{k+2}.
	\end{split}
	\end{equation}}
    Now, we will use \eqref{T-property} to obtain
	\begin{equation}\label{4.2}
		\int_{\Omega} \! \sum_{l=1}^{n}\langle{e_l}, T(\nabla{u_1})\rangle^{2} \mathrm{dm} = \int_{\Omega} \! |T(\nabla{u_1})|^{2} \mathrm{dm} \leq  \delta \int_{\Omega} \! \langle T(\nabla{u_1}), \nabla{u_1} \rangle \mathrm{dm} = \delta\lambda_{1}.
	\end{equation}
	Note that
    {\small
	\begin{equation}
        \nonumber
		\mathrm{div}(T(e_l)) = \sum_{j=1}^{n}\left\langle\nabla_{e_j}T(e_l), e_j\right\rangle = \sum_{j=1}^{n}\left\langle(\nabla_{e_j}T)(e_l), e_j\right\rangle = \left\langle e_l, \sum_{j=1}^{n}(\nabla_{e_j}T)e_j\right\rangle = \left\langle e_l, \mathrm{tr}(\nabla T) \right\rangle,
	\end{equation}}
	then
	\begin{equation}\label{4.3}
	\begin{split}
		&- \dfrac{1}{4} \int_{\Omega}\! \sum_{l=1}^{n}\big(\mathrm{div}(T(e_l)) - \langle T(\nabla{\eta}), e_l\rangle\big)^{2}u_1^{2} \mathrm{dm} \\
		&= - \dfrac{1}{4} \int_{\Omega}\! \left(\sum_{l=1}^{n} \left\langle e_l, \mathrm{tr}(\nabla T) \right\rangle^{2} - 2\mathrm{div}\left(T\left(\sum_{l=1}^{n}\left\langle T(\nabla{\eta}), e_l\right\rangle e_l\right)\right) \right.\\
		&\quad \left. + 2\sum_{l=1}^{n}\left\langle \nabla\big(\left\langle T(\nabla{\eta}), e_l\right\rangle\big), T(e_l)\right\rangle +  |T(\nabla{\eta})|^{2}\right)u_1^{2} \mathrm{dm} \\
		&= - \dfrac{1}{4} \int_{\Omega}\! \left(|\mathrm{tr}(\nabla T)|^{2} - 2\mathrm{div}\left(T^{2}(\nabla{\eta})\right)  \right.\\
		&\quad \left. + 2\sum_{j=1}^{n}\left\langle \nabla_{e_{j}} T(\nabla{\eta}), \sum_{l=1}^{n} \left\langle T({e_j}), e_{l}\right\rangle e_l\right\rangle +  |T(\nabla{\eta})|^{2}\right)u_1^{2} \mathrm{dm} \\
		&= - \frac{1}{4} \int_{\Omega}\! \left(|\mathrm{tr}(\nabla T)|^{2}\! -\!2\mathrm{div}\left(T^{2}(\nabla{\eta})\right) \!+\! 2\sum_{j=1}^{n}\!\left\langle \nabla_{e_{j}} T(\nabla{\eta}), T({e_j})\right\rangle \!+\!  |T(\nabla{\eta})|^{2}\!\right)\!u_1^{2} \mathrm{dm}.
	\end{split}
	\end{equation}
	Furthermore, we have 
	{\small
	\begin{equation}\label{4.4}
	\begin{split}
		&- \dfrac{1}{2} \int_{\Omega} \!\sum_{l=1}^{n}\langle\nabla\big(\mathrm{div}(T(e_l) - \langle T(\nabla{\eta}), e_l\rangle\big), T({e_l})\rangle u_1^{2} \mathrm{dm} \\
		=&- \dfrac{1}{2} \int_{\Omega} \!\sum_{l=1}^{n}\left( \left\langle \sum_{j=1}^{n}\left\langle e_l, \nabla_{e_{j}} \mathrm{tr}(\nabla T) \right\rangle e_{j}, T({e_l})\right\rangle - \left\langle\sum_{j=1}^{n} \left\langle \nabla_{e_{j}}T(\nabla{\eta}), e_l\right\rangle e_{j}, T({e_l})\right\rangle\right) u_1^{2} \mathrm{dm} \\
		=&- \dfrac{1}{2} \int_{\Omega} \!\left( \sum_{j=1}^{n}\left\langle T({e_j}), \nabla_{e_{j}} \mathrm{tr}(\nabla T) \right\rangle - \sum_{j=1}^{n}\left\langle \nabla_{e_{j}} T(\nabla{\eta}), T({e_j})\right\rangle \right) u_1^{2} \mathrm{dm}.
	\end{split}
	\end{equation}}
    Since
    {\small\begin{equation}
    \nonumber
    \begin{split}
		&\mathrm{div}\Big(T\big(\mathrm{tr}(\nabla T)\big)\Big) = \sum_{j=1}^{n}\left\langle \nabla_{e_{j}} T(\mathrm{tr}(\nabla T)), {e_j}\right\rangle = \sum_{j=1}^{n}\left\langle (\nabla_{e_{j}} T)(\mathrm{tr}(\nabla T)) + T(\nabla_{e_{j}}\mathrm{tr}(\nabla T)), {e_j}\right\rangle \\
		&= \left\langle \mathrm{tr}(\nabla T), \sum_{j=1}^{n} (\nabla_{e_{j}} T){e_j}\right\rangle + \sum_{j=1}^{n}\left\langle \nabla_{e_{j}}\mathrm{tr}(\nabla T), T({e_j})\right\rangle = |\mathrm{tr}(\nabla T)|^{2} + \sum_{j=1}^{n}\left\langle \nabla_{e_{j}}\mathrm{tr}(\nabla T), T({e_j})\right\rangle,
    \end{split}
	\end{equation}}
	and from the expressions \eqref{4.1}, \eqref{4.2}, \eqref{4.3} and \eqref{4.4}, it follows that
	\begin{equation}
    \nonumber
    \begin{split}
		(\lambda_{k+2} - \lambda_{k+1})^{2}\! &\leq \frac{16}{\sigma n}\left(\! \delta\lambda_{1} + \int_{\Omega}\!u_1^{2} \bigg[- \dfrac{1}{4}|\mathrm{tr}(\nabla T)|^{2} + \dfrac{1}{2}\mathrm{div}\left(T^{2}(\nabla{\eta})\right)  \right. \\
		&\quad \ - \dfrac{1}{2}\sum_{j=1}^{n}\left\langle \nabla_{e_{j}} T(\nabla{\eta}), T({e_j})\right\rangle - \dfrac{1}{4}|T(\nabla{\eta})|^{2} \\
		& \left.\quad \ - \dfrac{1}{2}\sum_{j=1}^{n}\left\langle T({e_j}), \nabla_{e_{j}} \mathrm{tr}(\nabla T) \right\rangle + \dfrac{1}{2}\sum_{j=1}^{n}\left\langle \nabla_{e_{j}} T(\nabla{\eta}), T({e_j})\right\rangle \bigg] \mathrm{dm}\right)\!\lambda_{k+2} \\
		&= \frac{16}{\sigma n}\left(\! \delta\lambda_{1} + \int_{\Omega}\!u_1^{2} \bigg[ \frac{1}{2}\mathrm{div}
		\bigg(T\Big(T(\nabla\eta) - \mathrm{tr}(\nabla T)\Big)\bigg) - \frac{1}{4}|T(\nabla\eta)|^{2} \right.  \\
		& \left.\quad \ + \dfrac{1}{4}|\mathrm{tr}(\nabla T)|^{2} \bigg] \mathrm{dm}\right)\!\lambda_{k+2} \\
		&\leq \frac{16}{\sigma n}\left(\! \delta\lambda_{1} + \int_{\Omega}\!u_1^{2} \bigg[ C_0 + \dfrac{T_{0}^{2}}{4} \bigg] \mathrm{dm}\right)\!\lambda_{k+2} = \frac{16}{\sigma n}\left(\! \delta\lambda_{1} + C_0 + \dfrac{T_{0}^{2}}{4}\right)\!\lambda_{k+2}.
    \end{split}
	\end{equation}
	Consequently, from Lemma~\ref{Yang-Ineq1} and the fact that $H_{0}^{2} = 0$ in $\mathbb{R}^n$, we conclude that
	\begin{equation}\label{4.5}
    \begin{split}
        \lambda_{k+2} - \lambda_{k+1} &\leq 4\sqrt{\frac{1}{\sigma n}\left(\! \delta\lambda_{1} + C_0 + \dfrac{T_{0}^{2}}{4}\right)\!\lambda_{k+2}} \\
		&\leq 4\sqrt{\frac{1}{\sigma n}\left(\! \delta\lambda_{1} + C_0 + \dfrac{T_{0}^{2}}{4}\right)\!\left(1 + \frac{4\delta}{n\varepsilon}\right)\!\left(\!\lambda_{1} + \dfrac{4C_0 + T_{0}^{2}}{4\delta}\right)\big(k+1\big)^{\frac{2\delta}{n\varepsilon}}} \\
		&= 4\sqrt{\frac{\delta}{\sigma n}\!\left(1 + \frac{4\delta}{n\varepsilon}\right)}
		\sqrt{\left(\!\lambda_{1} + \dfrac{4C_0 + T_{0}^{2}}{4\delta}\right)^{2}\big(k+1\big)^{\frac{2\delta}{n\varepsilon}}} \\
		&= 4\sqrt{\frac{\delta}{\sigma n}\!\left(1 + \frac{4\delta}{n\varepsilon}\right)}
		\left(\!\lambda_{1} + \dfrac{4C_0 + T_{0}^{2}}{4\delta}\right)\big(k+1\big)^{\frac{\delta}{n\varepsilon}} \\
		&= C_{n,\Omega}\big(k+1\big)^{\frac{\delta}{n\varepsilon}},
    \end{split}
	\end{equation} 
	where $\displaystyle C_{n,\Omega} = 4\left(\!\lambda_{1} + \dfrac{4C_0 + T_{0}^{2}}{4\delta}\right)\sqrt{\frac{\delta}{\sigma n}\!\left(1 + \frac{4\delta}{n\varepsilon}\right)}$. Therefore, \eqref{4.5} is true for arbitrary $k>1$.
\end{proof}

\subsection{Proof of Theorem~\ref{T1.2}}
\begin{proof}
We will use the upper-half-plane model of Hyperbolic space, that is,
	\begin{equation}
        \nonumber
		\mathbb{H}^n(-1) = \{(x_1,\cdots, x_n) \in \mathbb{R}^{n}; x_n>0\},
	\end{equation}
	equipped with the metric
	\begin{equation}
        \nonumber
		g_{js}(x_1, \cdots, x_n) = \left\langle \partial_{j}, \partial_{s} \right\rangle = \frac{1}{(x_n)^{2}} \delta_{js},
	\end{equation}
	where $\{\partial_{j}\}_{j=1}^n$ is the basis of the coordinate vector fields of $\mathbb{H}^n$.
		
	Taking $f=\ln{x_n}$ we will have
	\begin{equation}\label{4.6}
		\nabla(\ln x_n) = x_n\partial_n, \quad |\nabla{(\ln x_n)}| = 1\quad \mbox{ and } \quad \Delta(\ln x_n) = -(n-1),
	\end{equation}
    so, make use of the hypothesis that $T(\partial_{n})=\psi\partial_{n}$ and the fact that $\eta$ and $\psi$ are radially constant, we compute
	\begin{equation}\label{4.7}
	\begin{split}
	    \mathcal{L}{(\ln{x_n})} &= \mathrm{div}(T(\nabla(\ln x_n))) -  \left\langle \nabla\eta, T(\nabla{(\ln{x_n})})\right\rangle \\
		&= \mathrm{div}(T(x_{n}\partial_{n})) -  \left\langle \nabla\eta, T(x_{n}\partial_{n})\right\rangle \\
		&= \mathrm{div}(\psi(x_{n}\partial_{n})) -  \left\langle \nabla\eta, \psi(x_{n}\partial_{n})\right\rangle \\
		&= \psi \Delta(\ln x_n) + \left\langle \nabla\psi, x_{n}\partial_{n}\right\rangle - \psi\left\langle \nabla\eta, x_{n}\partial_{n}\right\rangle \\
		&= -(n-1)\psi.
	\end{split}
	\end{equation}
		
	We may assume without loss of generality that $\lambda_{k+1}<\lambda_{k+2}$. From \eqref{4.7}, \eqref{4.6} and taking $j=1$ in \eqref{3.15}, we obtain 
	{\small
	\begin{equation}
    \nonumber
    \begin{split}
        \lambda_{k+2} \!-\! \lambda_{k+1}\! &\leq\! \frac{4}{\sqrt{\sigma}}\!\left(\!\delta\lambda_{1} \!-\! \dfrac{1}{4}\!\int_{\Omega}\!
		\big(\!\mathcal{L}\!{(\ln{x_n})}\!\big)^{2}\!u_{1}^{2} \mathrm{dm} \!-\! \dfrac{1}{2}\!\int_{\Omega}\!
		\Big\langle\!\nabla(\!\mathcal{L}\!{(\ln{x_n})}\!),\! T(\!\nabla{\!(\ln{x_n}\!)}\!)\!\Big\rangle\! u_{1}^{2} \mathrm{dm}\!\right)^\frac{1}{2}\!\sqrt{\!\lambda_{k+2}} \\
		&= \frac{4}{\sqrt{\sigma}}\left(\delta\lambda_{1} - \dfrac{1}{4}	\big(n-1\big)^{2}\int_{\Omega}\psi^{2} u_{1}^{2} \mathrm{dm} \right)^\frac{1}{2}\sqrt{\lambda_{k+2}},
    \end{split}
	\end{equation}}
	and since $\varepsilon\leq\left\langle T(\nabla\ln x_n),\nabla\ln x_n\right\rangle=\psi$ we have
	\begin{equation}\label{4.8}
		\lambda_{k+2}\!-\!\lambda_{k+1}\!\leq \frac{4}{\sqrt{\sigma}}\left(\delta\lambda_{1} - \dfrac{\varepsilon^{2}}{4}
		\big(n-1\big)^{2}\right)^\frac{1}{2}\sqrt{\lambda_{k+2}}.
	\end{equation}
	According to Lemma~\ref{Yang-Ineq1} we have
	\begin{equation}\label{4.9}
		\lambda_{k+1} \leq \left(1 + \frac{4\delta}{n\varepsilon}
		\right) \left(\lambda_{1} + \frac{n^{2}H_{0}^{2} + 4C_{0} + T_{0}^{2}}{4\delta}\right)k^{\frac{2\delta}{n\varepsilon}},
	\end{equation}
	then we can apply \eqref{4.9} to \eqref{4.8}, and conclude 
    {\small
	\begin{equation}
	\nonumber
    \begin{split}
		\lambda_{k+2}\!-\!\lambda_{k+1} &\leq 
		\frac{4}{\sqrt{\sigma}}\! \left(\!\delta\lambda_{1} \!-\! \dfrac{\varepsilon^{2}}{4}\!\big(n\!-\!1\big)^{2}\!\right)^\frac{1}{2} \!\sqrt{\left(1 + \frac{4\delta}{n\varepsilon} \right) \left(\lambda_{1} + \frac{n^{2}H_{0}^{2} + 4C_{0} + T_{0}^{2}}{4\delta}\right) \Big(k+1\Big)^{\frac{2\delta}{n\varepsilon}}} \\
		&= C_{n,\Omega} \Big(k+1\Big)^{\frac{\delta}{n\varepsilon}},\quad \mbox{ for any } k>1,
    \end{split}
	\end{equation}}
	where $\displaystyle C_{n,\Omega} = \frac{4}{\sqrt{\sigma}}\sqrt{\!\left(1 + \frac{4\delta}{n\varepsilon}\right)\!\left(\!\delta\lambda_{1} \!-\! \dfrac{\varepsilon^{2}}{4}\!\big(n\!-\!1\big)^{2}\!\right)\!\left(\lambda_{1} + \frac{n^{2}H_{0}^{2}\!+\!4C_{0}\!+\!T_{0}^{2}}{4\delta}\right)}$. 
\end{proof}

\subsection{Proof of Theorem~\ref{T1.3}}
\begin{proof}
    From inequality~\eqref{3.15} of the Corollary~\ref{C3.2} for $j=1$ and the distance function $f=r$, since $|\nabla{r}|^{2}=1$ we obtain 
	\begin{equation}\label{4.10}
        \lambda_{k+2} - \lambda_{k+1} \leq \frac{4}{\sqrt{\sigma}}\left(\delta\lambda_{1} + \frac{1}{4}\int_{\Omega}\left(-(\square_{\eta}{r})^{2} - 2\langle\nabla(\square_{\eta}{r}),\!T(\partial{r})\rangle \right)u_{1}^{2} \mathrm{dm}\right)^\frac{1}{2}\sqrt{\lambda_{k+2}}.
	\end{equation}
    We need to estimate the expression 
    \begin{equation}
        \nonumber
        -(\square_{\eta}r)^2-2\langle\nabla(\square_{\eta}r),T(\partial_{r})\rangle.
    \end{equation}
    For this, we use that the tensor $T$ is radially parallel, that is, $\nabla_{\partial_{r}}T$ is null, such that, from Bochner formula~\eqref{Bo} for drifted Cheng-Yau operator, we get
	\begin{equation}
    \nonumber
        -\langle\nabla(\square_{\eta}r), T(\partial_{r})\rangle=-\psi_{n}\langle\nabla(\square_{\eta}r), \partial_{r}\rangle=\psi_{n}R_{\eta,T}(\partial_{r},\partial_{r})+\psi_{n}\langle\nabla^{2}r,\nabla^{2}r\circ{T}\rangle.
    \end{equation}
    Furthermore, we have
    \begin{equation}\label{4.11}
    \begin{split}
        -(\square_{\eta}{r})^{2} - 2\langle\nabla(\square_{\eta}{r}),\!T(\partial_{r})\rangle = &-\left(\square{r}\right)^{2}+2\langle\nabla\eta, T(\partial_{r})\rangle\square{r}-\langle\nabla\eta, T(\partial_{r})\rangle^{2}\\ 
        &+ 2\psi_{n}R_{\eta,T}(\partial_{r},\partial_{r})\quad + 2\psi_{n}\langle\nabla^{2}{r}, \nabla^{2}{r}\circ T\rangle.
    \end{split}
    \end{equation}
    Let us consider an orthonormal basics $\{\partial_{r}, e_{1}, \cdots, e_{n-1}\}$ of the eigenvectors of the operator $\nabla^{2}{r}$ with $0\leq h_{1} \leq \cdots \leq h_{n-1}$ the associated eigenvalues, respectively.
    Using the diagonalized form of the matrix associated with the operator $\nabla^{2}{r}$ we can calculate
    \begin{equation}
    \nonumber
    \begin{split}
        \langle\nabla^{2}{r}, \nabla^{2}{r} \circ T\rangle & =\sum_{j=1}^{n-1} \langle\nabla^{2}{r}(e_{j}), \nabla^{2}{r} \circ T(e_{j})\rangle=\sum_{j=1}^{n-1}  h_{j}\langle e_{j}, \nabla^{2}{r} \circ T(e_{j})\rangle\\
        &=\sum_{j=1}^{n-1} h_{j}\langle \nabla^{2}{r}(e_{j}), T(e_{j})\rangle =\sum_{j=1}^{n-1} h_{j}^{2}\langle e_{j},T(e_{j})\rangle=\sum_{j=1}^{n-1} h_{j}^{2} \theta_{j},
    \end{split}
    \end{equation}
    where $\theta_{j}=\langle e_{j},T(e_{j})\rangle$, and also
    \begin{equation}
        \nonumber
        \square{r}=\operatorname{tr}\!\left(\nabla^{2}{r},T\right)=\sum_{j=1}^{n-1} \langle\nabla^{2}{r}(e_{j}),T(e_{j})\rangle =\sum_{j=1}^{n-1} h_{j} \theta_{j}.
    \end{equation}
    Since $\varepsilon\left|X\right|^{2} \leq\langle T(X),X\rangle\leq\delta\left|X\right|^{2}$, we have
    \begin{equation}
    \nonumber
    \begin{split}
        &2\psi_{n}\langle\nabla^{2}{r}, \nabla^{2}{r}\circ T\rangle-(\square r)^{2} \\
        & =2 \psi_{n}\sum_{j=1}^{n-1} h_{j}^{2} \theta_{j}-\left(\sum_{i} h_{i} \theta_{i}\right)^{2} =2 \psi_{n}\sum_{j=1}^{n-1} h_{j}^{2} \theta_{j}-\sum_{i j=1}^{n-2} h_{i} h_{j} \theta_{i} \theta_{j} \\
        &=2 \psi_{n}\sum_{j=1}^{n-1} h_{j}^{2} \theta_{j}-\sum_{i j=1}^{n-2} h_{i} h_{j} \theta_{i} \theta_{j}\leq 2 \sum_{j=1}^{n-1} h_{j}^{2}\delta^2-\sum_{i j} h_{i} h_{j} \varepsilon^{2} \\
        & =\sum_{j=1}^{n-1} h_{j}^{2}\left(2\delta^2-\varepsilon^{2}\right)-\sum_{i\neq j} h_{i} h_{j} \varepsilon^{2} \\
        & \leq (2\delta^2-\varepsilon^{2})h_{n-1}^{2} + (2\delta^2-\varepsilon^{2}) h_{n-2}h_{n-1}+\cdots+(2\delta^2-\varepsilon^{2})h_{1}h_{2} - 2\varepsilon^{2} \sum_{i<j} h_{i} h_{j}
    \end{split}
    \end{equation}
    so
    \begin{equation}
    \nonumber
    \begin{split}
       &2\psi_{n}\langle\nabla^{2}{r}, \nabla^{2}{r}\circ T\rangle-(\square r)^{2}\\
        & \leq(2\delta^2-\varepsilon^{2})h_{n-1}^{2} + (2\delta^2-3\varepsilon^{2} )h_{n-2}h_{n-1}+\ldots+(2\delta^2-3\varepsilon^{2})h_{1}h_{2} - 2\varepsilon^{2} \!\!\!\sum_{\substack{i<j \\
        j\neq i+1}} \!\!\! h_{i} h_{j} \\
        & \leq (2\delta^2-\varepsilon^{2})h_{n-1}^{2}+ (n-2)(2\delta^2-2\varepsilon^{2})h_{n-1}^{2}-(n-2)\varepsilon^{2} h_{1}^{2}-[(n-2)^{2}-(n-2)] \varepsilon^{2}h_{1}^{2} \\[10pt]
        & =[2(n-1)\delta^2-(2n - 3)\varepsilon^{2}] h_{n-1}^{2}-(n-2)^{2}\varepsilon^{2}h_{1}^{2},
    \end{split}
    \end{equation}
    and then
    \begin{equation}\label{4.12}
        2\psi_{n}\langle\nabla^{2}{r}, \nabla^{2}{r}\circ T\rangle-(\square r)^{2}\leq [2(n-1)\delta^2-(2n - 3)\varepsilon^{2}] h_{n-1}^{2}-(n-2)^{2}\varepsilon^{2}h_{1}^{2}.
    \end{equation}
 Recall that $-\kappa_{1}^{2}\leq K\leq-\kappa_{2}^{2}$, so 
    \begin{equation}
    \nonumber
    \begin{split}
        2\psi_{n}R_{\eta,T}(\partial_{r},\partial_{r})&=2\psi_{n}\!\left(\sum_{j=1}^{n-1}\langle{R}(e_{j},\partial_{r})\partial_{r},T(e_{j})\rangle-\langle\nabla_{\partial_{r}}((\mathrm{div}{T})^{\sharp}-T(\nabla{\eta})),\partial_{r}\rangle\!\right) \\
        &=2\psi_{n}\left(\sum_{j=1}^{n-1}\psi_{j}\langle{R}(e_{j},\partial_{r})\partial_{r},e_{j}\rangle+\langle\nabla_{\partial_{r}}T(\nabla{\eta}),\partial_{r}\rangle\right)\\
        &\leq2\psi_{n}\left(\sum_{j=1}^{n-1}\psi_{j}\left(-k_{2}^{2}\right)+\langle(\nabla_{\partial_{r}}T)(\nabla{\eta})+T(\nabla_{\partial_{r}}\nabla{\eta}),\partial_{r}\rangle\right),
    \end{split}
	\end{equation}
    and once that $\nabla_{\partial_{r}}T=0$
    \begin{equation}\label{4.13}
    \begin{split}
        2\psi_{n}R_{\eta,T}(\partial_{r},\partial_{r})&\leq2\psi_{n}\left(\sum_{j=1}^{n-1}\psi_{j}\left(-k_{2}^{2}\right)+\langle \nabla_{\partial_{r}}\nabla{\eta},T(\partial_{r})\rangle\right)\\
        &=-2\sum_{j=1}^{n-1}\psi_{n}\psi_{j}k_{2}^{2}+2\psi_{n}^2\langle\nabla_{\partial_{r}}\nabla{\eta},\partial_{r}\rangle\\
        &\leq-2(n-1)\varepsilon^{2}k_{2}^{2}+2\psi_{n}^2|\nabla^{2}\eta(\partial_{r},\partial_{r})|\\
        &\leq-2(n-1)\varepsilon^{2}k_{2}^{2}+2\delta^2\eta_{1},
    \end{split}
	\end{equation}
    where $\eta_{1} = \sup_{\Bar{\Omega}} |\nabla^{2}\eta(\partial_{r},\partial_{r})|$.

    Using Rauch Comparison Theorem (Lemma \ref{RC}), we get 
    \begin{equation}
        \nonumber
        \kappa_{1}\frac{\cosh(\kappa_{1}r)}{\sinh(\kappa_{1}r)}\geq h_{n-1}\geq\cdots\geq h_{1}\geq\kappa_{2}\frac{\cosh(\kappa_{2}r)}{\sinh(\kappa_{2}r)},
    \end{equation}
    then, from \eqref{4.11} \eqref{4.12} e \eqref{4.13} we have
    \begin{equation}
    \nonumber
    \begin{split}
        -&(\square_{\eta}{r})^{2} - 2\langle\nabla(\square_{\eta}{r}),\!T(\partial_{r})\rangle \\
        &= 2\psi_{n}\langle\nabla^{2}{r}, \nabla^{2}{r}\circ T\rangle-\left(\square{r}\right)^{2}+2\langle\nabla\eta, T(\partial_{r})\rangle\square{r}-\langle\nabla\eta, T(\partial_{r})\rangle^{2} + 2\psi_{n}R_{\eta,T}(\partial_{r},\partial_{r}) \\
        &\leq [2(n-1)\delta^{2}-(2n-3)\varepsilon^{2}]h_{n-1}^{2} - (n-2)^{2}\varepsilon^{2}h_{1}^{2} - 2(n-1)\varepsilon^{2}\kappa_{2}^{2} \\
        &\quad \ + 2\delta^{2}\eta_{1} + 2\left\langle\nabla\eta, T(\partial_{r})\right\rangle\square r.
    \end{split}
    \end{equation}
    From Lemma \ref{RC} we have
    \begin{equation}
    \nonumber
    \begin{split}
        -&(\square_{\eta}{r})^{2} - 2\langle\nabla(\square_{\eta}{r}),\!T(\partial_{r})\rangle \\
        &\leq [2(n-1)\delta^{2}-(2n-3)\varepsilon^{2}]\kappa_{1}^{2}\frac{\cosh^{2}(\kappa_{1}r)}{\sinh^{2}(\kappa_{1}r)} - (n-2)^{2}\varepsilon^{2}\kappa_{2}^{2}\frac{\cosh^{2}(\kappa_{2}r)}{\sinh^{2}(\kappa_{2}r)} \\
        &\quad \ - 2(n-1)\varepsilon^{2}\kappa_{2}^{2} + 2\delta^{2}\eta_{1} + 2\left\langle\nabla\eta, T(\partial_{r})\right\rangle\square r \\[10pt]
        &=[2(n-1)\delta^{2}-(2n-3)\varepsilon^{2}]\left(\kappa_{1}^{2} + \frac{\kappa_{1}^{2}}{\sinh^{2}(\kappa_{1}r)}\right) - (n-2)^{2}\varepsilon^{2}\left(\kappa_{2}^{2} + \frac{\kappa_{2}^{2}}{\sinh^{2}(\kappa_{2}r)}\right) \\
        &\quad \ - 2(n-1)\varepsilon^{2}\kappa_{2}^{2} + 2\delta^{2}\eta_{1} + 2\left\langle\nabla\eta, T(\partial_{r})\right\rangle\square r.
    \end{split}
    \end{equation}
    Consequently,
    \begin{equation}\label{4.14}
    \begin{split}
        -&(\square_{\eta}{r})^{2} - 2\langle\nabla(\square_{\eta}{r}),\!T(\partial_{r})\rangle \\
        &\leq [2(n-1)\delta^{2}-(2n-3)\varepsilon^{2}]\kappa_{1}^{2} - [n^2-2n+2] \varepsilon^{2}\kappa_{2}^{2} + 2\delta^{2}\eta_{1} + 2\left\langle\nabla\eta, T(\partial_{r})\right\rangle\square r \\
        &\quad \ + [2(n-1)\delta^{2}-(2n-3)\varepsilon^{2}]\frac{\kappa_{1}^{2}}{\sinh^{2}(\kappa_{1}r)} - (n-2)^{2}\varepsilon^{2}\frac{\kappa_{2}^{2}}{\sinh^{2}(\kappa_{2}r)}.
    \end{split}
    \end{equation}
    Here, there are three cases to consider:
    \begin{itemize}
	\item[(1)] $0<k_{2}\leq{k_{1}}$: since $0<r$ and $\displaystyle{f(\theta)}=\frac{\theta^2}{\sinh^{2}(\theta r)}$ is a decreasing function for $\theta>0$, we have
	\begin{equation}
    \nonumber
		\frac{k_{1}^2}{\sinh^{2}(k_{1}r)}\leq\frac{k_{2}^2}{\sinh^{2}(k_{2}r)},
	\end{equation}
then,
    \begin{equation}
    \nonumber
    \begin{split}
        [&2(n-1)\delta^{2}-(2n-3)\varepsilon^{2}]\frac{\kappa_{1}^{2}}{\sinh^{2}(\kappa_{1}r)} - (n-2)^{2}\varepsilon^{2}\frac{\kappa_{2}^{2}}{\sinh^{2}(\kappa_{2}r)} \\
        &\leq [2(n-1)\delta^{2}-(2n-3)\varepsilon^{2} - (n-2)^{2}\varepsilon^{2}]\frac{\kappa_{1}^{2}}{\sinh^{2}(\kappa_{1}r)} \\
        &= [2(n-1)\delta^{2} -(n-1)^{2}\varepsilon^{2}]\frac{\kappa_{1}^{2}}{\sinh^{2}(\kappa_{1}r)}.
    \end{split}
    \end{equation}
    Therefore,
    \begin{equation}\label{4.15}
    \begin{split}
        \qquad\quad-(\square_{\eta}&{r})^{2} - 2\langle\nabla(\square_{\eta}{r}),\!T(\partial_{r})\rangle \\
        &\leq [2(n-1)\delta^{2}-(2n-3)\varepsilon^{2}]\kappa_{1}^{2} - [n^2-2n+2] \varepsilon^{2}\kappa_{2}^{2} + 2\delta^{2}\eta_{1} \\
        &\quad \ + 2\left\langle\nabla\eta, T(\partial_{r})\right\rangle\square r + [2(n-1)\delta^{2} -(n-1)^{2}\varepsilon^{2}]\frac{\kappa_{1}^{2}}{\sinh^{2}(\kappa_{1}r)}.
    \end{split}
    \end{equation}

    \item[(2)] $0=k_{2}<{k_{1}}$: in this case, we begin by estimating the expression
	\begin{equation}
    \nonumber
    \begin{split}
        -(\square_{\eta}&{r})^{2} - 2\langle\nabla(\square_{\eta}{r}),\!T(\partial_{r})\rangle \\
        &= [2(n-1)\delta^{2}-(2n-3)\varepsilon^{2}]\kappa_{1}^{2} + 2\delta^{2}\eta_{1} + 2\left\langle\nabla\eta, T(\partial_{r})\right\rangle\square r \\
        &\quad \ + [2(n-1)\delta^{2}-(2n-3)\varepsilon^{2}]\frac{\kappa_{1}^{2}}{\sinh^{2}(\kappa_{1}r)} - (n-2)^{2}\varepsilon^{2}\frac{1}{r^2}.
    \end{split}
    \end{equation}
    Since $0<{k_{1}}$ e $0<r$, we know that
	\begin{equation}
    \nonumber
        \frac{k_{1}^2}{\sinh^{2}(k_{1}r)}\leq\frac{1}{r^{2}},
	\end{equation}
    because it is sufficient to check that $\displaystyle{g(\theta)}=\sinh(a\theta)-a\theta$ is an increasing function for $a>0$ and $\theta>0$,  with $g(0)=0$. Then,
	\begin{equation}
    \nonumber
    \begin{split}
        [&2(n-1)\delta^{2}-(2n-3)\varepsilon^{2}]\frac{\kappa_{1}^{2}}{\sinh^{2}(\kappa_{1}r)} - (n-2)^{2}\varepsilon^{2}\frac{1}{r^2} \\
        &\leq [2(n-1)\delta^{2} -(n-1)^{2}\varepsilon^{2}]\frac{\kappa_{1}^{2}}{\sinh^{2}(\kappa_{1}r)}.
    \end{split}
	\end{equation}
	Thus,
	\begin{equation}\label{4.16}
        \begin{split}
        -(\square_{\eta}&{r})^{2} - 2\langle\nabla(\square_{\eta}{r}),\!T(\partial_{r})\rangle \\
        &\leq [2(n-1)\delta^{2}-(2n-3)\varepsilon^{2}]\kappa_{1}^{2} + 2\delta^{2}\eta_{1} + 2\left\langle\nabla\eta, T(\partial_{r})\right\rangle\square r \\
        &\quad \ + [2(n-1)\delta^{2} -(n-1)^{2}\varepsilon^{2}]\frac{\kappa_{1}^{2}}{\sinh^{2}(\kappa_{1}r)}.
        \end{split}
    \end{equation}

    \item[(3)] $0=k_{2}={k_{1}}$: here, we begin by estimating the expression
	\begin{equation}\label{4.17}
    \begin{split}
	    -(\square_{\eta}&{r})^{2} - 2\langle\nabla(\square_{\eta}{r}),\!T(\partial_{r})\rangle \\
        &\leq 2\delta^{2}\eta_{1} + 2\left\langle\nabla\eta, T(\partial_{r})\right\rangle\square r + [2(n-1)\delta^{2} -(n-1)^{2}\varepsilon^{2}]\frac{1}{r}.
    \end{split}
	\end{equation}
    \end{itemize}

    From inequalities \eqref{4.15}, \eqref{4.16} and \eqref{4.17}  the inequality \eqref{4.14} becomes
    \begin{equation}
    \nonumber
    \begin{split}
        -(\square_{\eta}{r}&)^{2} - 2\langle\nabla(\square_{\eta}{r}),\!T(\partial_{r})\rangle \\
        \leq& [2(n-1)\delta^{2}-(2n-3)\varepsilon^{2}]\kappa_{1}^{2} - [n^2-2n+2] \varepsilon^{2}\kappa_{2}^{2} + 2\delta^{2}\eta_{1} + 2\left\langle\nabla\eta, T(\partial_{r})\right\rangle\square r,
    \end{split}
    \end{equation}
    when $\Big[2(n-1)\delta^{2}-(n-1)^{2}\varepsilon^{2}\Big]\leq0$.

    From inequalities \eqref{4.15}-\eqref{4.17}  the inequality \eqref{4.14} becomes
    \begin{equation}
    \nonumber
    \begin{split}
        -(\square_{\eta}&{r})^{2} - 2\langle\nabla(\square_{\eta}{r}),\!T(\partial_{r})\rangle \\
        &\leq [2(n-1)\delta^{2}-(2n-3)\varepsilon^{2}]\kappa_{1}^{2} - [n^2-2n+2] \varepsilon^{2}\kappa_{2}^{2} + 2\delta^{2}\eta_{1} \\ 
        &\quad \ + 2\left\langle\nabla\eta, T(\partial_{r})\right\rangle\square r + \Big[2(n-1)\delta^{2}-(n-1)^{2}\varepsilon^{2}\Big]\frac{1}{d^{2}},
    \end{split}
	\end{equation}
    when $\Big[2(n-1)\delta^{2}-(n-1)^{2}\varepsilon^{2}\Big]>0$, since by taking $d=dist(\Omega,o)$, we have $0<d\leq{r}(x)$ and $\frac{1}{r^{2}}\leq\frac{1}{d^{2}}$  for all $x\in\Omega$.

	Let us define 
	\begin{equation}
        \nonumber
		a(n,T):=\left\{ \begin{array}{cl}
		0, & \mbox{ se  }\ 2(n-1)\delta^{2}-(n-1)^{2}\varepsilon^{2}\leq0, \\[5pt]
		2(n-1)\delta^{2}-(n-1)^{2}\varepsilon^{2}, & \mbox{ se }\ 2(n-1)\delta^{2}-(n-1)^{2}\varepsilon^{2} >0,
		\end{array}\right.
	\end{equation}
	so that
    \begin{equation} \label{4.18}
    \begin{split}
	    -(\square_{\eta}{r})^{2} - 2\langle\nabla(\square_{\eta}{r}),\!T(\partial_{r})\rangle
        &\leq [2(n-1)\delta^{2}-(2n-3)\varepsilon^{2}]\kappa_{1}^{2} - [n^2-2n+2] \varepsilon^{2}\kappa_{2}^{2} \\ 
        &\quad \ + 2\delta^{2}\eta_{1} + 2\left\langle\nabla\eta, T(\partial_{r})\right\rangle\square r + \frac{a(n,T)}{d^{2}}.
    \end{split}
	\end{equation}

    By using \cite[Proposition~3, p. 11]{FG} by Fonseca and Gomes, we get 
    \begin{equation}\label{4.19}
        \int_\Omega u_1^2 \square r \langle \nabla \eta , T(\partial_{r})\rangle dm \leq \delta^2\eta_{r}(n-1)\left(\kappa_1 + \frac{1}{d}\right),
    \end{equation}
    where $\eta_{r}=\max_{\overline{\Omega}}|\langle \nabla\eta,\partial_{r}\rangle|$, then from \eqref{4.10}, \eqref{4.18} and \eqref{4.19}  we obtain
	\begin{equation}\label{4.20}
    \begin{split}
        \lambda_{k+2}-\lambda_{k+1} \leq&\frac{4}{\sqrt{\sigma}}\left(\delta\lambda_{1}+\frac{[2(n-1)\delta^{2}-(2n-3)\varepsilon^{2}]\kappa_{1}^{2} - [n^2-2n+2] \varepsilon^{2}\kappa_{2}^{2}+2\delta^{2}\eta_{1}}{4}\right.\\
        &\qquad \quad \left.+\frac{\delta^2 \eta_{r}(n-1)(\kappa_1 + \frac{1}{d})}{2}+ \frac{a(n,T)}{4d^{2}}\right)^\frac{1}{2}\sqrt{\lambda_{k+2}}.
	\end{split}
    \end{equation}
	From Lemma~\ref{Yang-Ineq1}, for the $\square_{\eta}$ operator, we have
	\begin{equation}\label{4.21}
        \lambda_{k+2}\leq\left(1+\frac{4\delta}{n\varepsilon}\right)\left(\lambda_{1}+\frac{n^{2}H_{0}^{2}+4C_{0}}{4\delta}\right)(k+1)^{\frac{2\delta}{n\varepsilon}},
	\end{equation}
	where $\displaystyle C_{0} = \sup_{\overline{\Omega}}\left\{\frac{1}{2}\mathrm{div}
	\big(T^{2}(\nabla\eta)\big) - \frac{1}{4}|T(\nabla\eta)|^{2} \right\}$ and $\displaystyle{H_{0}}=\sup_{\overline{\Omega}}|{\bf{H}}_{T}|$. From \eqref{4.20} and \eqref{4.21} we obtain
	\begin{equation}\label{4.22}
    \begin{split}
		\lambda_{k+2}&-\lambda_{k+1}\\
        &\leq\frac{4}{\sqrt{\sigma}}\left(\delta\lambda_{1}\!+\!\frac{[2(n-1)\delta^{2}-(2n-3) \varepsilon^{2}]\kappa_{1}^{2} - [n^2-2n+2] \varepsilon^{2}\kappa_{2}^{2}+2\delta^{2}\eta_{1}}{4}\right.\\
		&\left. +\frac{\delta^2\eta_{r}(n-1)(\kappa_1 + \frac{1}{d})}{2}+ \frac{a(n,T)}{4d^{2}}\right)^\frac{1}{2}\!\left(\!\left(1+\frac{4\delta}{n\varepsilon}\right)\!\left(\lambda_{1}\!+\!\frac{n^{2}H_{0}^{2}\!+\!4C_{0}}{4\delta}\!\right)\!(k+1)^{\frac{2\delta}{n\varepsilon}}\!\right)^\frac{1}{2}\\
		=&\frac{4}{\sqrt{\sigma}}\left(\delta\lambda_{1}\!+\!\frac{[2(n-1)\delta^{2}-(2n-3) \varepsilon^{2}]\kappa_{1}^{2} - [n^2-2n+2] \varepsilon^{2}\kappa_{2}^{2}+2\delta^{2}\eta_{1}}{4}\right.\\
		&\left.+\frac{\delta^2\eta_{r}(n-1)(\kappa_1 + \frac{1}{d})}{2}+ \frac{a(n,T)}{4d^{2}}\right)^{\!\!\frac{1}{2}}\!\!\!\left(1+\frac{4\delta}{n\varepsilon}\right)^{\!\!\frac{1}{2}}\!\!\left(\lambda_{1}\!+\!\frac{n^{2}H_{0}^{2}+4C_{0}}{4\delta}\right)^{\!\!\frac{1}{2}}(k+1)^{\frac{\delta}{n\varepsilon}}\\
		=& C_{n,\Omega}(k+1)^{\frac{\delta}{n\varepsilon}},
	  \end{split}
    \end{equation}
	where
	\begin{equation}
    \nonumber
    \begin{split}
        C_{n,\Omega}=&\frac{4}{\sqrt{\sigma}}\left(\delta\lambda_{1}\!+\!\frac{[2(n-1)\delta^{2}-(2n-3) \varepsilon^{2}]\kappa_{1}^{2} - [n^2-2n+2] \varepsilon^{2}\kappa_{2}^{2}+2\delta^{2}\eta_{1}}{4}\right.\\
        &\left.+\frac{\delta^2\eta_{r}(n-1)(\kappa_1 + \frac{1}{d})}{2}+ \frac{a(n,T)}{4d^{2}}\right)^{\!\!\frac{1}{2}}\!\!\!\left(1+\frac{4\delta}{n\varepsilon}\right)^{\!\!\frac{1}{2}}\!\!\left(\lambda_{1}\!+\!\frac{n^{2}H_{0}^{2}+4C_{0}}{4\delta}\right)^{\!\!\frac{1}{2}}.
    \end{split}
	\end{equation}
	Consequently, inequality \eqref{4.22} is valid for all $k>1$.
\end{proof}

\end{document}